\documentclass[12pt]{amsart}
\usepackage{amsmath,amsthm,amsfonts,amscd,amssymb,eucal,latexsym,mathrsfs, appendix, yhmath, setspace}
\usepackage[numbers,sort&compress]{natbib}
\usepackage[all,cmtip]{xy}
\usepackage{enumerate}

\setlength{\textwidth}{15cm}
\setlength{\oddsidemargin}{4mm}
\setlength{\evensidemargin}{4mm}

\newtheorem{theorem}{Theorem}[section]
\newtheorem{corollary}[theorem]{Corollary}
\newtheorem{lemma}[theorem]{Lemma}
\newtheorem{proposition}[theorem]{Proposition}

\theoremstyle{definition}
\newtheorem{definition}[theorem]{Definition}
\newtheorem{remark}[theorem]{Remark}

\newtheorem{example}[theorem]{Example}
\newtheorem{question}[theorem]{Question}

\newcommand{\rad}{{\rm rad}}

\newcommand{\im}{{\rm im}}

\newcommand{\diag}{{\rm diag}}

\newcommand{\cA}{{\mathcal A}}

\newcommand{\cG}{{\mathcal G}}
\newcommand{\cH}{{\mathcal H}}

\newcommand{\cJ}{{\mathcal J}}

\newcommand{\cM}{{\mathcal M}}
\newcommand{\cN}{{\mathcal N}}

\newcommand{\cW}{{\mathcal W}}

\newcommand{\Cb}{{\mathbb C}}
\newcommand{\Db}{{\mathbb D}}

\newcommand{\Kb}{{\mathbb K}}

\newcommand{\Nb}{{\mathbb N}}

\newcommand{\Pb}{{\mathbb P}}

\newcommand{\Rb}{{\mathbb R}}

\newcommand{\Zb}{{\mathbb Z}}

\newcommand{\spec}{{\rm spec}}

\newcommand{\supp}{{\rm supp}}

\newcommand{\rM}{{\rm M}}

\newcommand{\rk}{{\rm rk}}

\newcommand{\rC}{{\rm C}}

\newcommand{\rW}{{\rm W}}
\newcommand{\rU}{{\rm U}}
\newcommand{\rV}{{\rm V}}
\newcommand{\sFPM}{{\mathscr{M}_{\rm fp}}}
\newcommand{\sFGM}{{\mathscr{M}_{\rm fg}}}

\allowdisplaybreaks

\begin{document}

\title{Malcolmson semigroups}

\author{Tsz Fun Hung}
\address{Tsz Fun Hung,
Department of Mathematics, SUNY at Buffalo, Buffalo, NY 14260-2900, USA
}
\address{Current address: Department of Mathematics, Fort Lewis College, 1000 Rim Drive, Durango, CO 81301, USA
}
\email{thung@fortlewis.edu}

\author{Hanfeng Li}
\address{Hanfeng Li,
Department of Mathematics, SUNY at Buffalo, Buffalo, NY 14260-2900, USA
}
\email{hfli@math.buffalo.edu}

\date{January 22, 2023}

\subjclass[2010]{Primary 16D10, 46L05}
\keywords{Cuntz semigroup, dimension function, Sylvester rank function, Malcolmson semigroup, partially ordered abelian semigroup}

\begin{abstract}
Inspired by the construction of the Cuntz semigroup for a $C^*$-algebra, we introduce the matrix Malcolmson semigroup and the finitely presented module Malcolmson semigroup for a unital ring. These two semigroups are shown to have isomorphic Grothendieck group in general and be isomorphic for von Neumann regular rings. For unital $C^*$-algebras, it is shown that the matrix Malcolmson semigroup has a natural surjective order-preserving homomorphism to the Cuntz semigroup, every dimension function is a Sylvester matrix rank function, and there exist Sylvester matrix rank functions which are not dimension functions.
\end{abstract}

\maketitle

\tableofcontents

\section{Introduction} \label{S-introduction}

The Cuntz semigroup and dimension function for $C^*$-algebras were introduced by Cuntz \cite{Cuntz78} as means to study simple $C^*$-algebras. For unital $C^*$-algebras $\cA$, the Cuntz semigroup $\rW_\rC(\cA)$ is a partially ordered abelian monoid with a fixed order-unit and the dimension functions are suitable $\Rb_{\ge 0}$-valued functions defined on the space $M(\cA)$ of rectangular matrices over $\cA$ and can be identified with the states of the Cuntz semigroup, i.e. semigroup homomorphisms $\rW_\rC(\cA)\rightarrow \Rb_{\ge 0}$ preserving the order and order-unit. Toms used the Cuntz semigroup to distinguish two simple separable nuclear unital $C^*$-algebras with the same Elliott invariants \cite{Toms}. Ever since then the Cuntz semigroup has played a vital role in the Elliott classification program \cite{APT, APRT, BPT, CEI, Thiel, Winter}.

The Sylvester rank functions for unital rings were introduced by Malcolmson \cite{Malcolmson80}. It can be defined equivalently at several levels: at the matrix level as suitable $\Rb_{\ge 0}$-valued functions on the space $M(R)$ of rectangular matrices over $R$ \cite{Malcolmson80}, at the module level as suitable $\Rb_{\ge 0}$-valued functions on the class $\sFPM(R)$ of finitely presented left $R$-modules \cite{Malcolmson80}, at the homomorphism level as
suitable $\Rb_{\ge 0}$-valued functions on the class of homomorphisms between finitely generated projective left $R$-modules \cite{Schofield}, and at the pair of modules level as suitable $\Rb_{\ge 0}\cup \{+\infty\}$-valued functions on the space of pairs of left $R$-modules $\cM_1\subseteq \cM_2$ \cite{Li21}.

The Sylvester rank functions have various applications in ring theory and $L^2$-invariants theory. The classical result of Cohn on epimorphisms from $R$ to division rings \cite{Cohn71} can be stated as that there is a natural $1$-$1$ correspondence between $\Zb_{\ge 0}$-valued Sylvester rank functions on $R$ and isomorphism classes of such homomorphisms \cite{Malcolmson80}. Schofield further showed that for algebras $R$ over a field, there is a natural $1$-$1$ correspondence between $\frac{1}{n}\Zb_{\ge 0}$-valued Sylvester rank functions on $R$ for some $n\in \Nb$  and equivalence classes of homomorphisms from $R$ to simple Artinian rings \cite[Theorem 7.12]{Schofield}, where two such homomorphisms $\varphi_1: R\rightarrow S_1$ and $\varphi_2:R\rightarrow S_2$ are called equivalent if there are a simple Artinian ring $S$ and  homomorphisms $\psi_i: S_i\rightarrow S$ for $i=1, 2$ with $\psi_1\circ \varphi_1=\psi_2\circ \varphi_2$. Ara, O'Meara, and Perera used Sylvester rank functions to establish Kaplansky's direct-finiteness conjecture for free-by-amenable groups \cite{AOP}, which was later extended by Elek and Szab\'{o} to all sofic groups \cite{ES04}. Sylvester rank functions also played a vital role in the recent work of Jaikin-Zapirain and L\'{o}pez-\'{A}lvarez on the Atiyah conjecture and the L\"{u}ck approximation conjecture \cite{JZ19, JL20}.  See also \cite{AC20, AC22a, AC22b, DS, Elek17, JZ17, JL21, JL22, Virili19}.

For a unital $C^*$-algebra $\cA$, the most important dimension functions and Sylvester matrix rank functions are induced from tracial states. In fact, when $\cA$ is exact, all lower semicontinuous dimension functions of $\cA$ are induced from tracial states of $\cA$ \cite{BH, Haagerup}.
In this work we give a systematic study of the relation between dimension functions and Sylvester matrix rank functions for $\cA$. It turns out that every dimension function is a Sylvester matrix rank function (see Proposition~\ref{P-M to C}), while not every Sylvester matrix rank function is a dimension function (Examples~\ref{E-rk not dim} and \ref{E-rk not dim2}). On the other hand, certain nice properties of the dimension functions still hold for all Sylvester matrix rank functions (Proposition~\ref{P-absolute value}).  From any Sylvester matrix rank function for $\cA$, one can construct a lower semicontinuous dimension function (Proposition~\ref{P-lower dimension}). In particular, if $\cA$ is exact and has a Sylvester rank function, then $\cA$ has a tracial state (Corollary~\ref{C-trace}).

The analogy between dimension functions and Sylvester matrix rank functions motivates us to introduce an algebraic counterpart of the Cuntz semigroup, called the matrix Malcolmson semigroup and denoted by $\rW_\rM(R)$, for any unital ring $R$. This is also a partially ordered abelian monoid with a fixed order-unit. For a unital $C^*$-algebra $\cA$, there is a natural surjective order-preserving homomorphism $\rW_\rM(\cA)\rightarrow \rW_\rC(\cA)$ (Proposition~\ref{P-M to C}), which may fail to be injective (Remark~\ref{R-M to C}).

As the Sylvester rank functions can be defined at both the matrix and the module levels, we also introduce a finitely presented module version of $\rW_\rM(R)$, denoted by $\rV_\rM(R)$, for any unital ring $R$. The Grothendieck groups of $\rW_\rM(R)$ and $\rV_\rM(R)$ are naturally isomorphic (Theorem~\ref{T-matrix vs fp}). When $R$ is von Neumann regular, $\rW_\rM(R)$ and $\rV_\rM(R)$ are the same (Theorem~\ref{T-regular}).

We take one step further to introduce a finitely generated module version of $\rV_\rM(R)$, denote by $\rU_\rM(R)$. It is somehow surprising that the partial order on $\rU_\rM(R)$ extends that on $\rV_\rM(R)$ (Theorem~\ref{T-fp vs fg}).

A fundamental result in the theory of partially ordered abelian groups is the Goodearl-Handelman theorem \cite{GH76, Goodearl86} describing the range of an element under states extending a given state of a subgroup. In applications to the study of partially ordered abelian semigroups such as $\rW_\rC(\cA)$ and $\rW_\rM(R)$, one needs to pass to the Grothendieck group to apply the Goodearl-Handelman theorem. Recently Antoine, Perera, and Thiel give a nice version of the Goodearl-Handelman theorem for positively ordered monoids \cite{APT18}.
In an appendix we give a version in the setting of
partially ordered abelian semigroups (Theorem~\ref{T-extension submonoid}), unifying the theorems of Goodearl-Handelman and Antoine-Perera-Thiel.
This enables us to show that a certain bound for the range of dimension function given by Cuntz in \cite{Cuntz78} is in fact precisely the range (Remark~\ref{R-Cuntz}) and to construct some interesting Sylvester rank functions in Theorem~\ref{T-rk for square} and Examples~\ref{E-rk not dim}.
Though these applications can be obtained using the Antoine-Perera-Thiel result for positively ordered monoids, the extension to partially ordered abelian semigroups is of some interest itself.

This paper is organized as follows. We collect some basic facts about Grothendieck groups, partially ordered abelian semigroups, Cuntz semigroups, and Sylvester rank functions in Section~\ref{S-prelim}.
In Section~\ref{S-Matrix M} we introduce $\rW_\rM(R)$ and calculate it for left Artinian unital local rings whose maximal ideal is generated by a central element.  Some interesting Sylvester rank functions are constructed in Section~\ref{S-commutative}. We compare $\rW_\rC(\cA)$ and dimension functions with $\rW_\rM(\cA)$ and Sylvester matrix rank functions for a unital $C^*$-algebra $\cA$ in Section~\ref{S-M vs C}. In Section~\ref{S-fp Module M} we introduce $\rV_\rM(R)$ and study its relation with $\rW_\rM(R)$. In Section~\ref{S-fg Module M} we introduce $\rU_\rM(R)$ and show that $\rV_\rM(R)$ embeds into $\rU_\rM(R)$. The extension of the Goodearl-Handelman theorem and the Antoine-Perera-Thiel theorem to partially ordered abelian semigroups is given in Appendix~\ref{S-POAS}.

\noindent{\it Acknowledgements.}
Both authors were partially supported by NSF grant DMS-1900746. We are grateful to Leonel Robert for bringing the paper \cite{Fuchssteiner} to our attention and to Francesc Perera and Hannes Thiel for comments. We also thank the referees for very helpful comments.

\section{Preliminaries} \label{S-prelim}

In this section we recall some basic definitions and facts about Grothendieck groups, partially ordered abelian semigroups, Cuntz semigroups, and Sylvester rank functions. Throughout this article, all unital rings are assumed to be nonzero.

\subsection{Grothendieck groups, partially ordered abelian semigroups, and states} \label{SS-state}

Let $\rW$ be an abelian semigroup. Denote by $\cG(\rW)$ the Grothendieck group of $\rW$. Then there is a canonical semigroup homomorphism $\rW\rightarrow \cG(\rW)$.  For each $a\in \rW$ denote by $[a]$ the image of $a$ in $\cG(\rW)$. Then the elements of $\cG(\rW)$ are all of the form $[a]-[b]$ for $a, b\in \rW$, and $[a]=[b]$ exactly when $a+c=b+c$ for some $c\in \rW$.

Now assume further that $(\rW, \preceq)$ is a {\it partially ordered abelian semigroup} in the sense that $\rW$ is an abelian semigroup, and $\preceq$ is a partial order on $\rW$ such that $a+c\preceq b+c$ whenever $a\preceq b$ and $c\in \rW$. Denote by $\cG(\rW)_+$ the set of elements of $\cG(\rW)$ of the form $[a]-[b]$ such that $b+c\preceq a+c$ for some $c\in \rW$.

Lemmas~\ref{L-positive cone} and \ref{L-scaled ordered} below are well known. We include a proof for convenience of the reader.

\begin{lemma} \label{L-positive cone}
Let $a, b\in \rW$. Then $[a]-[b]\in \cG(\rW)_+$ if and only if $b+c\preceq a+c$ for some $c\in \rW$.
\end{lemma}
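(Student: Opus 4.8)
The \emph{if} direction is immediate: the condition that $b+c\preceq a+c$ for some $c\in\rW$ is, by the very definition of $\cG(\rW)_+$, one way to witness that $[a]-[b]$ lies in $\cG(\rW)_+$. So the entire content is the \emph{only if} direction. Its whole point is that membership in $\cG(\rW)_+$ is defined through the \emph{existence} of \emph{some} representative $[a']-[b']$ satisfying the order condition, and we must transport that property to the given representative $[a]-[b]$.

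The plan is as follows. Assume $[a]-[b]\in\cG(\rW)_+$, so by definition there are $a',b',c'\in\rW$ with $[a]-[b]=[a']-[b']$ and $b'+c'\preceq a'+c'$. First I would rewrite the equality of group elements as $[a+b']=[a'+b]$ and invoke the stated criterion for equality in the Grothendieck group: there exists $d\in\rW$ with
\[
a+b'+d=a'+b+d.
\]

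Next I would combine this equality with the inequality $b'+c'\preceq a'+c'$. Adding $b+d$ to both sides of the latter, which is legitimate since $\preceq$ is translation invariant, gives
\[
b+b'+c'+d\preceq b+a'+c'+d,
\]
and the right-hand side equals $(a'+b+d)+c'=(a+b'+d)+c'$ by the displayed equality. Hence, setting $c:=b'+c'+d$, we obtain $b+c\preceq a+c$, as desired.

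There is no serious obstacle here; this is a foundational lemma. The only point requiring care is the mildly delicate one that an element of $\cG(\rW)$ admits many representations as a difference $[\,\cdot\,]-[\,\cdot\,]$, so the order condition built into the definition of $\cG(\rW)_+$ must be shown to be independent of the chosen representative. The mechanism that makes this work is precisely the cancellation element $d$ furnished by the Grothendieck group, which bridges the two representations while keeping the inequality intact.
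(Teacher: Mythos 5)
Your proof is correct and is essentially the paper's own argument: both extract the cancellation element witnessing equality in the Grothendieck group and combine it with translation invariance of $\preceq$ to produce a single element $c$ with $b+c\preceq a+c$. The only difference is cosmetic (you apply the inequality before the equality, and your witness $b'+c'+d$ differs from the paper's by swapping $b'$ for $a'$), so nothing further is needed.
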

\begin{proof} The ``if'' part follows from the definition of $\cG(\rW)_+$.

Assume that $[a]-[b]\in \cG(\rW)_+$. Then $[a]-[b]=[c]-[d]$ for some $c, d\in \rW$ such that $d+u\preceq c+u$ for some $u\in \rW$. Since $$[a+d]=[a]+[d]=[b]+[c]=[b+c],$$ we have $a+d+v=b+c+v$ for some $v\in \rW$.  Then
$$ b+(c+v+u)=a+d+v+u\preceq a+(c+v+u).$$
This proves the ``only if'' part.
\end{proof}

For a partially ordered abelian semigroup $(\rW, \preceq)$, we say $v\in \rW$ is an {\it order-unit} if for any $a\in \rW$ there is some $n\in \Nb$ such that $a\preceq a+v$, $a\preceq nv$, and $v\preceq a+nv$ \cite[Definition 2.1]{BR92}.  For a partially ordered abelian semigroup $(\rW, \preceq)$ with order-unit $v$, we denote it by $(\rW, \preceq, v)$.

For a partially ordered abelian group $(\cG, \preceq)$, set $\cG_+=\{a\in \cG: 0\preceq a\}$, where $0$ is the identity element of $\cG$. Then $\cG_+$ is a subsemigroup of $\cG$
and $\cG_+\cap (-\cG_+)=\{0\}$. Conversely, given any abelian group $\cG$ and any subsemigroup $\cG_+$ of $\cG$ satisfying $\cG_+\cap (-\cG_+)=\{0\}$, we have the partial order $\preceq$ on $\cG$ defined by $a\preceq b$ if $b-a\in \cG_+$. Thus we also write $(\cG, \cG_+)$ for a partially ordered abelian group.

For any partially ordered abelian group $(\cG, \cG_+)$, note that $u\in \cG$ is an order-unit exactly if  $u\in \cG_+$ and for any $a\in \cG$ one has $a\preceq nu$ for some $n\in \Nb$. In such case $\cG\neq \{0\}$ if and only if $u\neq 0$, where $0$ is the identity element of $\cG$.

\begin{lemma} \label{L-scaled ordered}
Let $(\rW, \preceq)$ be a partially ordered abelian semigroup with order-unit $v$. Then $(\cG(\rW), \cG(\rW)_+)$ is a partially ordered abelian group with order-unit $[v]$.
\end{lemma}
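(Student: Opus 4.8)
The plan is to verify the two conditions recalled just above the lemma: that $\cG(\rW)_+$ is a subsemigroup of $\cG(\rW)$ with $\cG(\rW)_+\cap(-\cG(\rW)_+)=\{0\}$, so that $(\cG(\rW),\cG(\rW)_+)$ is a partially ordered abelian group, and then that $[v]$ satisfies the stated criterion for an order-unit, namely $[v]\in\cG(\rW)_+$ and every $g\in\cG(\rW)$ admits $n\in\Nb$ with $g\preceq n[v]$. Throughout I would use Lemma~\ref{L-positive cone} to translate membership in $\cG(\rW)_+$ into an inequality in $\rW$.

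First I would check closure and antisymmetry. Given $x=[a_1]-[b_1]$ and $y=[a_2]-[b_2]$ in $\cG(\rW)_+$, Lemma~\ref{L-positive cone} provides $c_1,c_2$ with $b_i+c_i\preceq a_i+c_i$; adding these two inequalities via compatibility of $\preceq$ with addition gives $(b_1+b_2)+(c_1+c_2)\preceq(a_1+a_2)+(c_1+c_2)$, so $x+y\in\cG(\rW)_+$, while reflexivity shows $0=[a]-[a]\in\cG(\rW)_+$. For antisymmetry, suppose $x=[a]-[b]$ lies in $\cG(\rW)_+\cap(-\cG(\rW)_+)$; Lemma~\ref{L-positive cone} furnishes $c$ with $b+c\preceq a+c$ and $d$ with $a+d\preceq b+d$. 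Adding $d$ to the first and $c$ to the second and invoking antisymmetry of $\preceq$ yields $a+c+d=b+c+d$, whence $[a]=[b]$ and $x=0$. This establishes that $(\cG(\rW),\cG(\rW)_+)$ is a partially ordered abelian group.

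It remains to treat the order-unit. That $[v]\in\cG(\rW)_+$ is immediate: writing $[v]=[2v]-[v]$ and using $v\preceq 2v$ (the first order-unit clause applied to $v$), Lemma~\ref{L-positive cone} applies. The substantive step is to bound an arbitrary $g=[a]-[b]$ above by a multiple of $[v]$. By Lemma~\ref{L-positive cone}, $g\preceq n[v]$ amounts to finding $n\in\Nb$ and $c\in\rW$ with $a+c\preceq b+nv+c$. Here I would exploit both remaining order-unit clauses: choose $n_1$ with $a\preceq n_1v$ and $n_2$ with $v\preceq b+n_2v$, set $c=n_1v$, and combine $a+c\preceq 2n_1v$ with the inequality $2n_1v\preceq b+(n_2+2n_1-1)v$ obtained by adding $(2n_1-1)v$ to $v\preceq b+n_2v$; this produces the desired relation with $n=n_1+n_2-1\ge 1$.

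I expect this last step to be the main obstacle, precisely because $\rW$ is only a semigroup and offers no cancellation or subtraction: one cannot simply ``move $b$ to the other side,'' and the naive attempt to bound $a$ alone fails, since elements of $\rW$ need not satisfy $b\preceq 2b$. The trick is to absorb the unwanted term $b$ by paying for it with extra copies of $v$ through the clause $v\preceq b+n_2v$, which is exactly the flexibility the order-unit axiom is designed to provide. The earlier paragraphs, by contrast, are routine manipulations in the Grothendieck group once Lemma~\ref{L-positive cone} is in hand.
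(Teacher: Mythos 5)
Your proof is correct and follows essentially the same route as the paper: the same verification of closure and antisymmetry for $\cG(\rW)_+$ via Lemma~\ref{L-positive cone}, and the same order-unit estimate combining $a\preceq n_1v$ with $v\preceq b+n_2v$ to reach $n=n_1+n_2-1$. Your auxiliary element $c=n_1v$ is just the paper's chain $a\preceq n_1v\preceq b+(n_1+n_2-1)v$ with $n_1v$ added throughout, and is in fact dispensable, since $a\preceq b+nv$ already gives $a+c\preceq b+nv+c$ for every $c\in\rW$ by compatibility of $\preceq$ with addition.
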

\begin{proof} Let $[a]-[b], [c]-[d]\in \cG(\rW)_+$. Then $b+x \preceq a+x$ and $d+y \preceq c+y$ for some $x, y\in \rW$. Thus
$$([a]-[b])+([c]-[d])=[a+c]-[b+d]$$ with
$$b+d+(x+y)\preceq a+c+(x+y),$$ and hence $([a]-[b])+([c]-[d])\in \cG(\rW)_+$. Therefore $\cG(\rW)_+$ is a subsemigroup of $\cG(\rW)$.

We have $0=[a]-[a]\in \cG(\rW)_+\cap (-\cG(\rW)_+)$ for any $a\in \rW$.

Let $[a]-[b]\in \cG(\rW)_+\cap (-\cG(\rW)_+)$. By Lemma~\ref{L-positive cone} we have $b+c\preceq a+c$ and $a+d\preceq b+d$ for some $c, d\in \rW$.
Then $b+c+d\preceq a+c+d$ and $a+c+d\preceq b+c+d$. Thus $b+c+d=a+c+d$. Therefore $[a]-[b]=0$. This shows that $\cG(\rW)\cap (-\cG(\rW)_+)=\{0\}$.

We conclude that $(\cG(\rW), \cG(\rW)_+)$ is a partially  ordered abelian group.

Note that $[v]=[v+a]-[a]\in \cG(\rW)_+$ for any $a\in \rW$. Let $[a]-[b]\in \cG(\rW)_+$. Then there are some $n, m\in \Nb$ such that $v\preceq b+nv$ and $a\preceq mv$. It follows that $a\preceq mv\preceq b+(n+m-1)v$, whence $[a]-[b]\preceq (n+m-1)[v]$. Therefore $[v]$ is an order-unit of $(\cG(\rW), \cG(\rW)_+)$.
\end{proof}

For a partially ordered abelian semigroup $(\rW, \preceq, v)$, a {\it state} of $(\rW, \preceq, v)$ is a semigroup homomorphism $\varphi: \rW\rightarrow \Rb$ such that $\varphi(v)=1$ and $\varphi(a)\le \varphi(b)$ whenever $a\preceq b$ \cite[Definition 2.1]{BR92}. Clearly there is a natural one-to-one correspondence between states of $(\rW, \preceq, v)$ and states of $(\cG(\rW), \cG(\rW)_+, [v])$.

The following is the fundamental result of Goodearl and Handelman on extensions of states of partially ordered abelian groups with order-unit \cite[Lemma 3.1 and Theorem 3.2]{GH76} \cite[Lemma 4.1 and Proposition 4.2]{Goodearl86}.

\begin{theorem} \label{T-GH}
Let $(\cG, \cG_+, u)$ be a partially ordered abelian group with order-unit.
Let $\cH$ be a subgroup  of $\cG$ containing $u$, and let $\varphi$ be a state of $(\cH, \cH\cap \cG_+, u)$. Let $a\in \cG$.  Set
$$p=\sup\{\varphi(b)/m \mid b\in \cH, m\in \Nb, b\preceq ma\},$$ and
$$q=\inf\{\varphi(b)/m \mid b\in \cH, m\in \Nb, b\succeq ma\}.$$ Then $-\infty<p\le q<\infty$, and
$$[p, q]=\{\psi(a)\mid  \psi \mbox{ is a state of } (\cG, \cG_+, u) \mbox{ extending } \varphi\}.$$ In particular, every state of $(\cH, \cH\cap \cG_+, u)$ extends to a state of $(\cG, \cG_+, u)$.
\end{theorem}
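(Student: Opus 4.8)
The plan is to prove the Goodearl--Handelman theorem (Theorem~\ref{T-GH}) by a Zorn's lemma argument combined with a careful one-step extension. First I would establish the inequality $-\infty < p \le q < \infty$. Finiteness of $p$ and $q$ follows from the order-unit property: since $u$ is an order-unit, there is some $n \in \Nb$ with $a \preceq nu$ and $-a \preceq nu$, which bounds the relevant sets $\{\varphi(b)/m\}$ from above and below. For $p \le q$, I would take any $b_1, b_2 \in \cH$ and $m_1, m_2 \in \Nb$ with $b_1 \preceq m_1 a$ and $b_2 \succeq m_2 a$; then $m_2 b_1 \preceq m_1 m_2 a \preceq m_1 b_2$, so $m_2 b_1 \preceq m_1 b_2$ inside $\cH$, and applying the monotone homomorphism $\varphi$ gives $\varphi(b_1)/m_1 \le \varphi(b_2)/m_2$. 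Taking suprema and infima yields $p \le q$.

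Next I would handle the \emph{one-step extension}: given a state $\varphi$ of $(\cH, \cH \cap \cG_+, u)$ and an element $a \in \cG \setminus \cH$, I would extend $\varphi$ to a state on the larger subgroup $\cH' = \cH + \Zb a$ taking any prescribed value $t \in [p, q]$ at $a$. The candidate map is $\psi(b + k a) = \varphi(b) + k t$ for $b \in \cH$, $k \in \Zb$; I must check this is well-defined (if $a$ has infinite order modulo $\cH$ this is automatic, and if $n a \in \cH$ for some minimal $n$, one needs $t$ to be consistent, which the definition of $p, q$ guarantees) and, crucially, that it is monotone. Monotonicity amounts to showing $\psi(b + ka) \ge 0$ whenever $b + ka \in \cG_+$, which I would verify by splitting into the cases $k > 0$, $k < 0$, and $k = 0$ and invoking the defining inequalities for $p$ and $q$ together with $t \in [p,q]$. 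This one-step argument is the heart of the proof and the step I expect to be the main obstacle, since the monotonicity verification requires delicately translating order relations like $-b \preceq ka$ into bounds of the form $\varphi(-b)/k \le p \le t$.

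With the one-step extension in hand, I would apply Zorn's lemma to the poset of pairs $(\cK, \psi)$, where $\cK$ is a subgroup with $\cH \subseteq \cK \subseteq \cG$ and $\psi$ is a state of $(\cK, \cK \cap \cG_+, u)$ extending $\varphi$, ordered by extension. Any chain has an upper bound given by the union, so a maximal element $(\cK_0, \psi_0)$ exists; if $\cK_0 \ne \cG$, pick $a' \in \cG \setminus \cK_0$ and apply the one-step extension to contradict maximality, forcing $\cK_0 = \cG$. This proves every state of $\cH$ extends to a state of $\cG$, and in particular gives the final sentence of the theorem.

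Finally, to obtain the full range statement $[p,q] = \{\psi(a) : \psi \text{ extends } \varphi\}$, I would argue both inclusions. The inclusion $\{\psi(a)\} \subseteq [p,q]$ follows because any extending state $\psi$ satisfies, for all $b \preceq ma$ in $\cH$, the bound $\varphi(b) = \psi(b) \le m\psi(a)$, hence $\psi(a) \ge p$, and symmetrically $\psi(a) \le q$. For the reverse inclusion, given any $t \in [p,q]$, the one-step extension produces a state on $\cH + \Zb a$ with value $t$ at $a$, and then the Zorn argument extends it to all of $\cG$ without changing its value at $a$. This closes the proof.
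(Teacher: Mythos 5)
Your proof is correct and is the classical Goodearl--Handelman argument: the order-unit property gives $-\infty<p\le q<\infty$ (nonemptiness of both defining sets via $-nu\preceq a\preceq nu$, plus your cross-multiplication $m_2b_1\preceq m_1m_2a\preceq m_1b_2$ for $p\le q$), the one-step extension $\psi(b+ka)=\varphi(b)+kt$ on $\cH+\Zb a$ is well defined (in the torsion case $na\in\cH$ one has $q\le \varphi(na)/n\le p$, so $p=q$ and $t$ is forced, which is exactly the consistency you invoke) and positive by the case analysis on the sign of $k$ that you describe, and Zorn's lemma then yields both the extension statement and the full range $[p,q]$. For the record, the paper offers no proof of Theorem~\ref{T-GH}, citing \cite{GH76,Goodearl86} instead; its appendix proves the semigroup analogue (Lemma~\ref{L-APT2} and Theorem~\ref{T-extension submonoid}) by the same one-step-plus-Zorn scheme, adapted to the absence of subtraction via the auxiliary $\bar{m}a$ terms and an order-unit argument even to get $p>-\infty$, so your route matches the source the paper relies on.
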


A partially ordered abelian semigroup $(\rW, \preceq)$ is called a {\it positively ordered monoid} if $\rW$ has an identity element $0$ and $0\preceq a$ for all $a\in \rW$ \cite[page 172]{APT18}.

The following nice result on extension of states for positively ordered monoid is a special case of \cite[Lemma 5.2.3]{APT18}, which applies more generally for positively ordered monoids not necessarily with order-units.

\begin{lemma} \label{L-APT}
Let $(\rW, \preceq, v)$ be a positively ordered monoid with order-unit. Let $\rW_1$ be a submonoid of $\rW$ containing $v$, and let $\varphi$ be a state of
$(\rW_1, \preceq, v)$. Let $a\in \rW$. Set
$$p=\sup\{(\varphi(b)-\varphi(c))/m \mid  b, c\in \rW_1, m\in \Nb, b\preceq c+ma\},$$ and
$$q=\inf\{(\varphi(b)-\varphi(c))/m \mid b, c\in \rW_1, m\in \Nb, b\succeq c+ma\}.$$
Then $0\le p\le q< \infty$ and
$$[p, q]=\{\psi(a) \mid \psi \mbox{ is a state of } (\rW_1+\Zb_{\ge 0} a, \preceq, v) \mbox{ extending } \varphi\}.$$
\end{lemma}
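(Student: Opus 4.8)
```latex
The plan is to deduce Lemma~\ref{L-APT} from the Goodearl-Handelman theorem (Theorem~\ref{T-GH}) by passing to the Grothendieck group, rather than reproving the Antoine-Perera-Thiel machinery from scratch. The target monoid is $\rW_2 := \rW_1 + \Zb_{\ge 0}a$, a submonoid of $\rW$ containing $\rW_1$ and $v$. First I would form $\cG(\rW)$ and its positive cone $\cG(\rW)_+$; by Lemma~\ref{L-scaled ordered} this is a partially ordered abelian group with order-unit $[v]$. Set $\cG = \cG(\rW)$, $\cG_+ = \cG(\rW)_+$, $u = [v]$, let $\cH = \cG(\rW_1)$ be the image of $\rW_1$ in $\cG$, and let $\cK = \cG(\rW_2)$ be the image of $\rW_2$. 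Since $\varphi$ is a state of $(\rW_1,\preceq,v)$, it induces a group state $\bar\varphi$ on $(\cH, \cH\cap\cG_+, u)$ by the natural correspondence between semigroup states and Grothendieck-group states noted after the definition of state. Applying Theorem~\ref{T-GH} with the element $[a]\in\cG$ and the subgroup $\cH$ yields numbers
$$
p' = \sup\{\bar\varphi(\beta)/m : \beta\in\cH,\ m\in\Nb,\ \beta\preceq m[a]\},\quad
q' = \inf\{\bar\varphi(\beta)/m : \beta\in\cH,\ m\in\Nb,\ \beta\succeq m[a]\},
$$
satisfying $-\infty < p'\le q' < \infty$, with $[p',q']$ equal to the set of values $\psi([a])$ as $\psi$ ranges over states of $(\cG,\cG_+,u)$ extending $\bar\varphi$.

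The main work is to translate between the group-level quantities $p',q'$ and the semigroup-level quantities $p,q$ in the statement, and between states of $\cK$ and states of $\rW_2$. Each element $\beta\in\cH = \cG(\rW_1)$ has the form $\beta = [b]-[c]$ with $b,c\in\rW_1$, and by Lemma~\ref{L-positive cone} the condition $\beta\preceq m[a]$ (i.e.\ $m[a]-\beta\in\cG_+$) is equivalent to $b + z \preceq c + ma + z$ for some $z\in\rW$, which—after absorbing $z$ using the order-unit property of $v$ and the positivity $0\preceq z$—can be arranged to read $b\preceq c+ma$ in $\rW$. Here $\bar\varphi(\beta) = \varphi(b)-\varphi(c)$. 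Thus the defining set for $p'$ matches that for $p$, giving $p' = p$; symmetrically $q' = q$. The inequalities $0\le p$ follow by taking $b=c$, $m=1$ (using $0\preceq a$, which holds since $\rW$ is a positively ordered monoid), and $p\le q$ is inherited from $p'\le q'$.

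It remains to match the extending states. The restriction of Theorem~\ref{T-GH} gives states of the full group $(\cG,\cG_+,u)$, whereas I want states of the submonoid $\rW_2$. The key point is that a state $\psi$ of $(\cG,\cG_+,u)$ extending $\bar\varphi$ restricts to a group state of $(\cK,\cK\cap\cG_+,u)$, hence to a semigroup state of $(\rW_2,\preceq,v)$ extending $\varphi$; and the value $\psi([a])$ equals the value of this semigroup state at $a$. Conversely, a semigroup state $\psi_0$ of $(\rW_2,\preceq,v)$ extending $\varphi$ induces a group state of $\cK$, whose value at $[a]$ is determined, and by Theorem~\ref{T-GH} applied \emph{within} the group generated by $\cH$ and $[a]$—which is exactly $\cK$—every value in $[p',q']$ is attained by some such state. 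I would therefore verify that $\cK$ is generated by $\cH$ together with $[a]$, so that states of $(\cK,\cK\cap\cG_+,u)$ extending $\bar\varphi$ are precisely parametrized by their value on $[a]$, ranging over $[p',q']=[p,q]$. Pushing this correspondence back to the semigroup level through the canonical map $\rW_2\to\cK$ then identifies $[p,q]$ with $\{\psi(a):\psi\text{ a state of }(\rW_2,\preceq,v)\text{ extending }\varphi\}$.

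The step I expect to be the main obstacle is the careful passage, via Lemma~\ref{L-positive cone}, between the semigroup inequalities $b\preceq c+ma$ (with auxiliary cancellation elements $z\in\rW$) and the clean group inequalities $[b]-[c]\preceq m[a]$: one must ensure the auxiliary elements $z$ can be removed without shrinking or enlarging the supremum and infimum, which relies on positivity ($0\preceq z$) and the order-unit to dominate $z$. A secondary subtlety is checking that restricting a group state on $\cG$ to $\cK$ and then to $\rW_2$ really does recover the correspondence between group and semigroup states in both directions, and that no values in $[p,q]$ are lost when moving from states of all of $\cG$ to states of the smaller group $\cK$; this is handled by invoking Theorem~\ref{T-GH} with $\cK$ itself as the ambient group, using that $[v]$ is still an order-unit of $\cK$.
```
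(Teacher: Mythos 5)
Your overall route---reducing to Theorem~\ref{T-GH} by passing to the Grothendieck group---is genuinely different from the paper, which simply quotes \cite[Lemma 5.2.3]{APT18} (and, in the appendix, adapts that direct semigroup-level argument to prove the more general Lemma~\ref{L-APT2}). The architecture is viable, but there is a concrete gap at exactly the step you flagged, and the mechanism you propose there does not work. For $b,c\in\rW_1$, the group inequality $[b]-[c]\preceq m[a]$ unwinds via Lemma~\ref{L-positive cone} to $b+z\preceq c+ma+z$ for \emph{some} $z\in\rW$, and you claim this ``can be arranged to read $b\preceq c+ma$'' by absorbing $z$ using $0\preceq z$ and $z\preceq nv$. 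A single absorption yields only $b\preceq b+z\preceq c+ma+z\preceq (c+nv)+ma$, a clean witness whose ratio is $(\varphi(b)-\varphi(c)-n)/m$, off by $n/m$; since $\rW$ has no cancellation, the stabilized inequality cannot be converted into $b\preceq c+ma$ itself, so your identifications $p=p'$ and $q=q'$ are unproved as written. The repair is an iteration-and-limit argument: from $b+z\preceq c+ma+z$ one shows inductively $kb+z\preceq kc+kma+z$, hence $kb\preceq (kc+nv)+kma$ for all $k\in\Nb$, and the clean ratios $(k\varphi(b)-k\varphi(c)-n)/(km)$ converge to $(\varphi(b)-\varphi(c))/m$, so the \emph{supremum} (though not each individual witness) is preserved; dually, $kb+nv\succeq kc+kma$ handles $q$. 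This is precisely the computation the paper carries out in proving Lemma~\ref{L-APT2}, so your reduction does not bypass the technical heart of the direct proof---it needs it.

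The same omission occurs one step earlier, where you invoke ``the natural correspondence'' to obtain the group state $\bar\varphi$ on $(\cH,\cH\cap\cG_+,u)$. The correspondence recorded in Section~\ref{SS-state} is between a semigroup and \emph{its own} Grothendieck group; here $\cH$ is the image of $\cG(\rW_1)$ in $\cG(\rW)$, that map need not be injective, and both the relation $[b]-[c]=[b']-[c']$ and the cone $\cH\cap\cG(\rW)_+$ are witnessed by auxiliary elements of $\rW$, not of $\rW_1$. Thus well-definedness and monotonicity of $\bar\varphi$ are genuine claims, again established by the same iteration trick (from $b+c'+w=b'+c+w$ with $w\preceq nv$, deduce $k(b+c')\preceq k(b'+c)+nv$ and let $k\to\infty$), not by the quoted correspondence. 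Once these two points are supplied, the remainder of your outline goes through: $[v]$ is indeed an order-unit of the subgroup $\cK$ generated by $\cH$ and $[a]$, Theorem~\ref{T-GH} applied inside $\cK$ produces the extending states, and these restrict along $\rW_1+\Zb_{\ge 0}a\to\cK$ to semigroup states. Note also that the inclusion $\{\psi(a)\}\subseteq[p,q]$ needs no Grothendieck group at all: if $b\preceq c+ma$ with $b,c\in\rW_1$, then monotonicity of $\psi$ on $\rW_1+\Zb_{\ge 0}a$ gives $\varphi(b)\le\varphi(c)+m\psi(a)$ directly, which is simpler than your detour through an induced group state on $\cK$.
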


The following analogue of Theorem~\ref{T-GH} for positively ordered monoids with order-unit, due to Antoine, Perera, and Thiel, is a direct consequence of Lemma~\ref{L-APT} and Zorn's lemma.
The last sentence was proven earlier by Fuchssteiner \cite[Theorem 1]{Fuchssteiner} and Blackadar and R{\o}rdam \cite[Corollary 2.7]{BR92}.

\begin{theorem} \label{T-GH1}
Let $(\rW, \preceq, v)$ be a positively ordered monoid with order-unit.
Let $\rW_1$ be a submonoid of $\rW$ containing $v$, and let $\varphi$ be a state of $(\rW_1, \preceq, v)$. Let $a\in \rW$. Set $$p=\sup\{(\varphi(b)-\varphi(c))/m \mid  b, c\in \rW_1, m\in \Nb, b\preceq c+ma\},$$ and
$$q=\inf\{(\varphi(b)-\varphi(c))/m \mid b, c\in \rW_1, m\in \Nb, b\succeq c+ma\}.$$ Then $0\le p\le q<\infty$, and
$$[p, q]=\{\psi(a) \mid  \psi \mbox{ is a state of } (\rW, \preceq, v) \mbox{ extending } \varphi\}.$$ In particular,  every state of $(\rW_1, \preceq, v)$ extends to a state of $(\rW, \preceq, v)$.
\end{theorem}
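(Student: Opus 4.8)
The plan is to deduce the theorem directly from Lemma~\ref{L-APT} together with a Zorn's lemma argument that extends $\varphi$ one generator at a time. The inequalities $0\le p\le q<\infty$ require no work: the numbers $p$ and $q$ are defined here by exactly the same formulas as in Lemma~\ref{L-APT}, so that assertion is literally a restatement of the corresponding part of Lemma~\ref{L-APT}. Thus the content is the set equality, and I would prove the two inclusions separately.

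For the inclusion $\{\psi(a): \psi \text{ a state of } (\rW,\preceq,v) \text{ extending } \varphi\}\subseteq [p,q]$, I would observe that if $\psi$ is any state of $(\rW,\preceq,v)$ extending $\varphi$, then its restriction to the submonoid $\rW_1+\Zb_{\ge 0} a$ is a state of $(\rW_1+\Zb_{\ge 0} a,\preceq,v)$ extending $\varphi$, so Lemma~\ref{L-APT} gives $\psi(a)\in [p,q]$ at once. Equivalently, this follows directly from monotonicity: for $b,c\in\rW_1$ and $m\in\Nb$ with $b\preceq c+ma$ one has $\varphi(b)=\psi(b)\le \psi(c)+m\psi(a)=\varphi(c)+m\psi(a)$, whence $(\varphi(b)-\varphi(c))/m\le \psi(a)$ and so $p\le \psi(a)$; the bound $\psi(a)\le q$ is symmetric.

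The substantive direction is the reverse inclusion, and this is where Zorn's lemma enters. Fixing $t\in[p,q]$, Lemma~\ref{L-APT} first supplies a state $\psi_0$ of $(\rW_1+\Zb_{\ge 0} a,\preceq,v)$ extending $\varphi$ with $\psi_0(a)=t$. I would then consider the set $\Sigma$ of all pairs $(\rW',\psi')$ in which $\rW'$ is a submonoid of $\rW$ containing $\rW_1+\Zb_{\ge 0} a$ and $\psi'$ is a state of $(\rW',\preceq,v)$ extending $\psi_0$, partially ordered by declaring $(\rW',\psi')\le(\rW'',\psi'')$ when $\rW'\subseteq \rW''$ and $\psi''|_{\rW'}=\psi'$. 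This set is nonempty, and for any chain the union of the first coordinates is again a submonoid containing $v$ while the common extension of the states is a well-defined state on that union; hence every chain has an upper bound, and Zorn's lemma produces a maximal element $(\rW_*,\psi_*)$.

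It then remains to show $\rW_*=\rW$. Suppose not and pick $b\in\rW\setminus\rW_*$. Since $\rW_*$ is a submonoid containing $v$ and $\psi_*$ is a state of $(\rW_*,\preceq,v)$, I would apply Lemma~\ref{L-APT} with $\rW_*$, $\psi_*$, $b$ in place of $\rW_1$, $\varphi$, $a$; it yields numbers $p'\le q'$ and asserts that the interval $[p',q']$, nonempty because $p'\le q'$, is exactly the set of values $\psi'(b)$ over states $\psi'$ of $(\rW_*+\Zb_{\ge 0} b,\preceq,v)$ extending $\psi_*$. In particular at least one such $\psi'$ exists, so $(\rW_*+\Zb_{\ge 0} b,\psi')$ strictly dominates $(\rW_*,\psi_*)$, contradicting maximality. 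Therefore $\rW_*=\rW$, and $\psi_*$ is a state of $(\rW,\preceq,v)$ extending $\varphi$ with $\psi_*(a)=t$; this finishes the reverse inclusion, and the final ``in particular'' clause is the special case obtained by taking any $t\in[p,q]$, which exists since $p\le q$. The one delicate point, which I expect to be the main obstacle, is organizing the poset $\Sigma$ so that the single-generator extension step of Lemma~\ref{L-APT} can be reapplied at a maximal element and so that unions along chains genuinely land back in $\Sigma$; once this bookkeeping is in place, no further computation is needed.
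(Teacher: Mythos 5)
Your proposal is correct and is exactly the argument the paper intends: the paper offers no written proof beyond the remark that Theorem~\ref{T-GH1} ``is a direct consequence of Lemma~\ref{L-APT} and Zorn's lemma,'' and your write-up (upper/lower bounds on $\psi(a)$ via monotonicity, a first extension to $\rW_1+\Zb_{\ge 0}a$ with $\psi_0(a)=t$ by Lemma~\ref{L-APT}, then Zorn's lemma on pairs $(\rW',\psi')$ with the one-generator step of Lemma~\ref{L-APT} reapplied at a maximal element) is precisely that deduction, carried out correctly.
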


\subsection{Cuntz semigroup and dimension function} \label{SS-Cuntz}

We refer the reader to \cite{Cuntz78, BH, APT} for detail on Cuntz semigroup and dimension function.

Let $\cA$ be a $C^*$-algebra. Denote by $M(\cA)$ the space of all rectangular matrices over $\cA$. For $A, B\in M(\cA)$, we say $A$ is {\it Cuntz subequivalent} to $B$, written as $A\preceq_\rC B$, if there are sequences $\{C_n\}_{n\in \Nb}$ and $\{D_n\}_{n\in \Nb}$ in $M(\cA)$ such that $C_nBD_n\to A$ in norm as $n\to \infty$. This is a reflexive and transitive relation on $M(\cA)$, and if $A\preceq_\rC B$ and $C\preceq_\rC D$, then
$\diag(A, C)\preceq_\rC \diag(B, D)$.
 If $A\preceq_\rC B$ and $B\preceq_\rC A$, we say that $A$ and $B$ are {\it Cuntz equivalent} and write $A\sim_\rC B$. Then $\sim_\rC$ is an equivalence relation on $M(\cA)$, $\diag(A, B)\sim_\rC \diag(B, A)$ for all $A, B\in M(\cA)$, and if $A\sim_\rC B$ and $C\sim_\rC D$, then
$\diag(A, C)\sim_\rC \diag(B, D)$. The {\it Cuntz semigroup} of $\cA$ is $\rW_\rC(\cA):=M(\cA)/\sim_\rC$. For each $A\in M(\cA)$, denote by $\langle A\rangle_\rC$ the equivalence class of $A$ in $\rW_\rC(\cA)$. For any $\langle A\rangle_\rC, \langle B\rangle_\rC\in \rW_\rC(\cA)$, $\langle A \rangle_\rC+\langle B\rangle_\rC:=\langle \diag(A, B)\rangle_\rC$ does not depend on the choice of the representatives $A$ and $B$. Thus $\rW_\rC(\cA)$ is an abelian monoid with identity $\langle 0\rangle_\rC$ and partial order given by $\langle A\rangle_\rC \preceq_\rC \langle B \rangle_\rC$ if $A\preceq_\rC B$.

\begin{proposition} \label{P-Cuntz}
The following hold:
\begin{enumerate}
\item $A\sim_\rC |A|\sim_\rC A^*$ for every $A\in M(\cA)$.
\item $A\sim_\rC A^{\theta}$ for any $A\in M(\cA)_{\ge 0}$ and $\theta>0$.
\item $B\preceq_\rC A$ for any $B, A\in M(\cA)$ with $0\le B\le A$.
\item If $\cA$ is the $C^*$-algebra $C(X)$ of continuous $\Cb$-valued functions on some compact Hausdorff space $X$, then for any $f, g\in \cA$, one has $f\preceq_\rC g$ iff $\supp(f)\subseteq \supp(g)$, where $\supp(f)=\{x\in X: f(x)\neq 0\}$ is the open support of $f$.
\end{enumerate}
\end{proposition}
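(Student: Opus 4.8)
The plan is to establish the four facts about Cuntz comparison for a $C^*$-algebra $\cA$ in order, using functional calculus and the defining approximation $C_nBD_n\to A$. These are standard facts from the theory, so I would aim for clean, self-contained arguments.

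For part (1), to show $A\sim_\rC |A|$, I would use the polar-type decomposition. Writing $|A|=(A^*A)^{1/2}$, the singular value relations give $A=U|A|$ in a formal sense; more concretely, the functions $f_n(t)=t^{1/2}(t+1/n)^{-1/2}$ applied via continuous functional calculus to $A^*A$ yield elements realizing $|A|=\lim A^* \cdot (\text{something}) $ and conversely, so both $A\preceq_\rC |A|$ and $|A|\preceq_\rC A$ follow by exhibiting the approximating sequences $C_n, D_n$. The equivalence $A\sim_\rC A^*$ is symmetric: taking adjoints of the approximation $C_nAD_n\to B$ shows $B^*\preceq_\rC A^*$, and combined with part (1) applied to both $A$ and $A^*$ (noting $|A^*|\sim_\rC |A|$ via the same functional-calculus trick on $AA^*$ versus $A^*A$) one closes the loop.

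For part (2), I would work entirely within the abelian $C^*$-subalgebra generated by the positive element $A$. Here $A^\theta$ is defined by functional calculus, and since $t\mapsto t^\theta$ and $t\mapsto t^{1/\theta}$ are each uniform limits on $[0,\|A\|]$ of polynomials vanishing at $0$, one directly writes $A^\theta$ as a limit of $C_nAD_n$ and vice versa; the vanishing at $0$ is what lets the approximation absorb into the multiplier sequences. Part (3) is the most geometric: given $0\le B\le A$, I would apply the functions $e_n(t)=\min(1, nt)$ (or $(A+1/n)^{-1/2}$ type cutoffs) to $A$ to produce $D_n$ with $D_n A D_n^*\to B$, using the operator inequality $0\le B\le A$ to control $\|(A+1/n)^{-1/2}B(A+1/n)^{-1/2}\|\le 1$ and hence extract a weak-$*$ convergent net whose limit implements the subequivalence. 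Finally, for part (4), in $\cA=C(X)$ the Cuntz relation becomes pointwise: $f\preceq_\rC g$ means $f$ is approximated by $c_n g d_n$ with $c_n, d_n\in C(X)$, which forces $f$ to vanish wherever $g$ does, giving $\supp(f)\subseteq \supp(g)$; conversely if the supports are nested, the functions $f\cdot \overline{g}\cdot(|g|^2+1/n)^{-1}$ furnish the approximants.

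The main obstacle I expect is part (3): proving $B\preceq_\rC A$ from the mere operator inequality $0\le B\le A$ requires genuine care with the resolvent cutoffs $(A+\varepsilon)^{-1/2}$, since these need not converge in norm and $A$ need not be invertible. The key technical point is to verify $\lim_{\varepsilon\to 0}(A+\varepsilon)^{-1/2}A(A+\varepsilon)^{-1/2}$ behaves like a support projection and that $0\le B\le A$ yields $(A+\varepsilon)^{-1/2}B(A+\varepsilon)^{-1/2}$ bounded uniformly, so that $B^{1/2}(A+\varepsilon)^{-1/2}A^{1/2}$ can serve (after passing to a subsequence or using positivity directly) as the multiplier driving $C_n A D_n\to B$. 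Parts (1), (2), and (4) reduce to routine functional calculus on the relevant commutative subalgebras and should present no serious difficulty.
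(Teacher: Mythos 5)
The paper does not prove this proposition at all: it is stated as background in Section~\ref{SS-Cuntz}, with the reader referred to \cite{Cuntz78, BH, APT}, so there is no in-paper argument to compare against. Your sketch reproduces the standard proofs from that literature and is essentially correct in all four parts. For (1), the identities $A^*A(A^*A+1/n)^{-1/2}\to |A|$ and $A(A^*A+1/n)^{-1/2}|A|\to A$ give $A\sim_\rC |A|$, and the adjoint trick ($C_nAD_n\to B$ implies $D_n^*A^*C_n^*\to B^*$) reduces $A\sim_\rC A^*$ to $|A^*|\sim_\rC |A|$; note the latter is cleanest via $(A^*A)^2=A^*(AA^*)A$ together with part (2), so you should prove (2) first to keep the logic linear. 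Part (2) via polynomials vanishing at $0$ is correct (Stone--Weierstrass applies since $t^\theta$ and $t^{1/\theta}$ are continuous and vanish at $0$, and $\cA$ is unital so the polynomial factors lie in $M(\cA)$). For the converse in (4), your approximants $f\bar g(|g|^2+1/n)^{-1}$ do work, but you should record the compactness step that makes the convergence uniform: given $\varepsilon>0$, on the compact set $\{|f|\ge \varepsilon\}$ one has $|g|\ge\delta>0$, so the error $|f|\cdot (1/n)/(|g|^2+1/n)$ is $O(1/n)$ there, while off that set it is at most $\varepsilon$.

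One correction to (3): drop the idea of extracting a weak-$*$ convergent net. Cuntz subequivalence requires \emph{norm} convergence $C_nAD_n\to B$, and a weak-$*$ limit point of your contractions lives in the bidual and implements nothing inside $M(\cA)$; likewise $(A+1/n)^{-1/2}A(A+1/n)^{-1/2}$ generally has no norm limit, and no support projection is needed. The argument closes in norm with no subsequence, using the positivity route you mention: set $T_n=B^{1/2}(A+1/n)^{-1/2}$, a contraction precisely because $B\le A+1/n$ (your bound). Since $0\le B\le A$ gives $\|B^{1/2}x\|^2=\|x^*Bx\|\le \|x^*Ax\|=\|A^{1/2}x\|^2$ for every $x$, taking $x=(A+1/n)^{-1/2}A^{1/2}-1$ yields $\|T_nA^{1/2}-B^{1/2}\|\le \sup_{t\ge 0} t^{1/2}\bigl(1-\sqrt{t/(t+1/n)}\bigr)=O(n^{-1/2})$, whence $T_nAT_n^*=(T_nA^{1/2})(T_nA^{1/2})^*\to B$ in norm and $B\preceq_\rC A$ with $C_n=T_n$, $D_n=T_n^*$. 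With that repair, your proposal is a complete and faithful rendering of the standard proofs the paper cites.
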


As indicated by Proposition~\ref{P-Cuntz}.(1), $\rW_\rC(\cA)$ can be defined equivalently using only positive matrices over $\cA$, which is how $\rW_\rC(\cA)$ is defined in most literature. Though we won't use it in this work, let us point out that another semigroup ${\rm Cu}(\cA)=\rW_\rC(\Kb \otimes \cA)$, where $\Kb\otimes \cA$ is the stablization of $\cA$, was introduced in \cite{CEI} and is heavily studied.

Now assume that $\cA$ is unital.  Then $\langle 1\rangle_{\rC}$ is an order-unit of the partially ordered abelian semigroup  $(\rW_\rC(\cA), \preceq_{\rC})$. A {\it dimension function} for $\cA$ is a function $\varphi: M(\cA)\rightarrow \Rb_{\ge 0}$ satisfying that  $\varphi(A)\le \varphi(B)$ whenever $A\preceq_\rC B$, $\varphi(1)=1$, and $\varphi(\diag(A, B))=\varphi(A)+\varphi(B)$. Clearly there is a natural one-to-one correspondence between dimension functions $\varphi$ of $\cA$ and states $\psi$ of $(\rW_\rC(\cA), \preceq_{\rC}, \langle 1\rangle_{\rC})$: every $\varphi$ is of the form $\psi\circ \pi$ for a unique $\psi$, where $\pi$ is the quotient map $M(\cA)\rightarrow \rW_\rC(\cA)$.

\subsection{Sylvester rank function} \label{SS-rank function}

For general references on Sylvester matrix and module rank functions see \cite{Malcolmson80, Schofield, JZ17}.
Let $R$ be a unital ring. Denote by $M(R)$ the set of all rectangular matrices over $R$.

\begin{definition} \label{D-Sylvester matrix}
A {\it Sylvester matrix rank function} for $R$ is a function $\rk: M(R)\rightarrow \Rb_{\ge 0}$ satisfying the following conditions:
\begin{enumerate}
\item $\rk(0)=0$ and $\rk(1)=1$.
\item $\rk(AB)\le \min(\rk(A), \rk(B))$.
\item $\rk(\left[\begin{matrix} A & \\ & B \end{matrix}\right])=\rk(A)+\rk(B)$.
\item $\rk(\left[\begin{matrix} A & C\\ & B \end{matrix}\right])\ge \rk(A)+\rk(B)$.
\end{enumerate}
\end{definition}

We shall denote by $\Pb(R)$ the set of all Sylvester matrix rank functions for $R$. It is naturally a compact convex space under the topology of pointwise convergence. When $R$ is a division ring, it has a unique Sylvester matrix rank function, i.e. $\rk(A)$ is the dimension of the row space or column space of $A$ over $R$.

Denote by $\sFPM(R)$ the class of finitely presented left $R$-modules.

\begin{definition} \label{D-Sylvester mod}
A {\it Sylvester module rank function} for $R$ is a function $\dim: \sFPM(R)\rightarrow \Rb_{\ge 0}$ satisfying the following conditions:
\begin{enumerate}
\item $\dim(0)=0$ and $\dim(R)=1$.
\item $\dim(\cM_1\oplus \cM_2)=\dim(\cM_1)+\dim(\cM_2)$.
\item For any exact sequence $\cM_1\rightarrow \cM_2\rightarrow \cM_3\rightarrow 0$, one has
$$\dim(\cM_3)\le \dim(\cM_2)\le \dim(\cM_1)+\dim(\cM_3).$$
\end{enumerate}
\end{definition}

From (3) it is clear that $\dim$ is an isomorphism invariant.
The following is \cite[Theorem 4]{Malcolmson80}.

\begin{theorem} \label{T-bijection}
There is a natural one-to-one correspondence between Sylvester matrix rank functions and Sylvester module rank functions for $R$ as follows:
\begin{enumerate}
\item Given a Sylvester matrix rank function $\rk$ for $R$, for any $\cM\in \sFPM(R)$ take some $A\in M_{n, m}(R)$ such that $\cM\cong R^m/R^nA$, and define $\dim(\cM)=m-\rk(A)$. Then $\dim$ is a Sylvester module rank function for $R$.
\item Given a Sylvester module rank function $\dim$ for $R$, for any $A\in M_{n, m}(R)$, define $\rk(A)=m-\dim(R^m/R^nA)$. Then $\rk$ is a Sylvester matrix rank function for $R$.
\end{enumerate}
\end{theorem}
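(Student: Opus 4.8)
The plan is to establish the bijection in Theorem~\ref{T-bijection} by verifying that each construction produces a valid function of the claimed type, and that the two constructions are mutually inverse. The main structural point to address first is \emph{well-definedness}: in part (1) the value $\dim(\cM)=m-\rk(A)$ must not depend on the choice of presentation $\cM\cong R^m/R^nA$, and in part (2) we must check that $R^m/R^nA$ is genuinely finitely presented (which it is, being the cokernel of a map between free modules of finite rank) and that $\rk$ is well-defined on matrices. The delicate half is part (1): given two presentations $R^m/R^nA\cong \cM\cong R^{m'}/R^{n'}A'$, I would show $m-\rk(A)=m'-\rk(A')$. The standard tool here is Schanuel's lemma, which says that for two short exact sequences $0\to K\to R^m\to \cM\to 0$ and $0\to K'\to R^{m'}\to \cM\to 0$ with the free modules projective, one has $K\oplus R^{m'}\cong K'\oplus R^m$. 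I would then relate the ranks of $A$ and $A'$ to dimensions of these kernels/images, using the axioms of $\rk$ to translate stable isomorphism of the presenting data into an equality of rank values.

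First I would treat part (1). Fixing a presentation $R^n \xrightarrow{A} R^m \to \cM \to 0$, so that the image of $A$ (acting as right multiplication, matching the convention $R^nA$) is the relation submodule, I would argue that $m-\rk(A)$ depends only on $\cM$ by comparing two presentations via a common refinement. Concretely, given presentations by $A$ and $A'$, I would build block matrices realizing the isomorphism $R^m/R^nA \cong R^{m'}/R^{n'}A'$ and its inverse, and use axioms (3) and (4) of Definition~\ref{D-Sylvester matrix} together with multiplicativity axiom (2) to force $m-\rk(A)=m'-\rk(A')$. The cleanest route is probably to show directly that adding trivial relations or redundant generators changes $m$ and $\rk(A)$ by the same amount: appending a zero row to $A$ leaves $\rk$ unchanged by axioms (1) and (3), while appending a column corresponding to a generator together with the obvious relation increases both $m$ and $\rk(A)$ by one. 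Tait-style reduction of any two presentations to a common one via such elementary moves then yields invariance.

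Next I would verify the three defining properties of a Sylvester module rank function for the $\dim$ produced in part (1). Property $\dim(0)=0$ and $\dim(R)=1$ follow from the presentations $0=R^0/R^0\cdot(\,)$ and $R=R^1/R^1\cdot 0$ together with $\rk(0)=0$ and $\rk(1)=1$. Additivity $\dim(\cM_1\oplus\cM_2)=\dim(\cM_1)+\dim(\cM_2)$ follows by taking block-diagonal presentations and invoking axiom (3). The exactness inequalities in Definition~\ref{D-Sylvester mod}(3) are the substantive part here: given an exact sequence $\cM_1\to\cM_2\to\cM_3\to 0$, I would choose compatible presentations whose presenting matrices assemble into a block-upper-triangular matrix presenting $\cM_2$, and read off the two inequalities from axioms (3) and (4), which respectively give the super- and sub-additivity of $\rk$ on such block matrices. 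Symmetrically, for part (2) I would check axioms (1)--(4) of Definition~\ref{D-Sylvester matrix} for $\rk(A)=m-\dim(R^m/R^nA)$, translating each matrix axiom into a statement about finitely presented modules: block-diagonal and block-triangular matrix assemblies correspond to direct sums and the exact sequences in Definition~\ref{D-Sylvester mod}.

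Finally I would confirm the two constructions are inverse to each other. Starting from $\rk$, passing to $\dim$, and back recovers $\rk(A)=m-\dim(R^m/R^nA)=m-(m-\rk(A))=\rk(A)$ immediately from the definitions, and the reverse composite is equally direct once well-definedness is in hand. I expect the main obstacle to be part (1), the independence of $\dim$ from the chosen presentation; everything else is a matter of choosing presentations adapted to the block structure and reading off the axioms. A secondary care point is the bookkeeping of left versus right multiplication in the convention $R^m/R^nA$, which must be handled consistently so that the rank axioms (stated for $AB$ and for block matrices) apply to the transposed or row-space conventions actually in play.
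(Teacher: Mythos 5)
The paper itself offers no proof of Theorem~\ref{T-bijection}: it is quoted verbatim from \cite[Theorem 4]{Malcolmson80}. The closest in-paper material is Section~\ref{S-fp Module M}, where Lemmas~\ref{L-module to matrix1}, \ref{L-module to matrix2} and \ref{L-matrix to module} carry out, at the level of Malcolmson semigroups, essentially the computations your plan calls for. Your outline is in substance a reconstruction of Malcolmson's original argument, and the parts on additivity, the matrix axioms in part (2), and the mutual-inverse check are organized correctly: for instance, the nontrivial half of $\rk(AB)\le \min(\rk(A),\rk(B))$ is exactly the exact sequence $R^l\rightarrow R^m\oplus(R^l/R^nB)\rightarrow R^m/R^nA\rightarrow 0$ appearing as Case II in the proof of Lemma~\ref{L-matrix to module}.

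There is, however, a genuine gap, and it sits at the step you yourself flag as delicate. Schanuel's lemma in its usual form is too weak for well-definedness in part (1): it produces an abstract isomorphism $K\oplus R^{m'}\cong K'\oplus R^{m}$ of relation modules, but a Sylvester matrix rank function is a function on matrices, and nothing in Definition~\ref{D-Sylvester matrix} lets you evaluate $\rk$ on an abstract finitely generated module, nor conclude that two matrices with (stably) isomorphic images have equal rank --- that this holds at all is essentially the bivariant extension theory of \cite{Li21}, unavailable (and circular) at this stage. What is needed is the automorphism form for free ambient modules: if $R^m/R^nA\cong R^l/R^kB$, then some automorphism of $R^{m+l}$ carries $R^nA\oplus R^l$ onto $R^m\oplus R^kB$. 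This is \cite[Lemma 2]{Malcolmson80}, and the proof of Lemma~\ref{L-module to matrix1} shows exactly how it is deployed: it yields $\diag(A,I_l)\sim_\rM\diag(I_m,B)$, hence $\rk(A)+l=m+\rk(B)$, i.e.\ $m-\rk(A)=l-\rk(B)$. Your fallback route via elementary moves (stable equivalence of presentations) is equivalent to this lemma and would work, but you assert it rather than prove it, so in either formulation the crucial ingredient is left unestablished. A smaller organizational slip: in the exactness inequalities the assembled block matrix $\bigl[\begin{smallmatrix} B & \\ & A \\ & C \end{smallmatrix}\bigr]$ presents $\cM_1\oplus\cM_3$, not $\cM_2$; the cokernel $\cM_3$ is presented by stacking a lift $C$ onto the relation matrix $A$ of $\cM_2$, and the inequality $\dim(\cM_2)\le\dim(\cM_1)+\dim(\cM_3)$ then requires the compatibility $BC=DA$ together with the factorization displayed in the proof of Lemma~\ref{L-module to matrix2}, which your sketch of ``compatible presentations'' gestures at but does not supply.
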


A unital ring $R$ is {\it von Neumann regular} \cite[page 61]{Lam01} \cite[page 1]{Goodearl} if for any $a\in R$ one has $aba=a$ for some $b\in R$.

\begin{definition} \label{D-regular rank}
A Sylvester matrix rank function $\rk$ for $R$ is {\it regular} if there are a unital ring homomorphism $\varphi$ from $R$ to some von Neumann regular ring $S$ and a $\rk_S\in \Pb(S)$  such that $\rk(A)=\rk_S(\varphi(A))$ for all $A\in M(R)$.
\end{definition}

We shall denote by $\Pb_{\rm reg}(R)$ the set of all regular Sylvester matrix rank functions for $R$. This is a closed convex subset of $\Pb(R)$ \cite[Proposition 5.9]{JZ19}.

\section{Matrix Malcolmson semigroup} \label{S-Matrix M}

Throughout this section let $R$ be a unital ring.
The following is inspired by the construction of the Cuntz semigroup.

\begin{definition} \label{D-Matrix Malcolmson}
For $A, B\in M(R)$, we write $A\lesssim B$ if either $A=CBD$ for some $C, D\in M(R)$ or $B=\left[\begin{matrix} C & E\\ & D \end{matrix}\right]$ and $A=\left[\begin{matrix} C & \\ & D \end{matrix}\right]$ for some $C, D, E\in M(R)$. We say $A$ is {\it Malcolmson subequivalent} to $B$, written as $A\preceq_\rM B$, if there are $n\in \Nb$ and  $A_1, \dots, A_n\in M(R)$ such that $A=A_1\lesssim A_2\lesssim\dots \lesssim A_n=B$.
This is a reflexive and transitive relation on $M(R)$. If $A\preceq_\rM B$ and $B\preceq_\rM A$, we say that $A$ and $B$ are {\it Malcolmson equivalent} and write $A\sim_\rM B$. Then $\sim_\rM$ is an equivalence relation on $M(R)$. We define the {\it matrix Malcolmson semigroup} of $R$ as $\rW_\rM(R):=M(R)/\sim_\rM$.
For each $A\in M(R)$, denote by $\langle A\rangle_\rM$ the equivalence class of $A$ in $\rW_\rM(R)$.
\end{definition}

Note that if $A\preceq_\rM B$ and $C\preceq_\rM D$, then $\diag(A, C)\preceq_\rM \diag(B, D)$.  Also note that $\diag(A, B)\sim_\rM \diag(B, A)$ for all $A, B\in M(R)$, and if $A\sim_\rM B$ and $C\sim_\rM D$, then
$\diag(A, C)\sim_\rM \diag(B, D)$. For any $\langle A\rangle_\rM, \langle B\rangle_\rM\in \rW_\rM(R)$, $\langle A \rangle_\rM+\langle B\rangle_\rM:=\langle \diag(A, B)\rangle_\rM$ does not depend on the choice of the representatives $A$ and $B$. Thus $\rW_\rM(R)$ is an abelian monoid with identity $\langle 0\rangle_\rM$ and partial order given by $\langle A\rangle_\rM \preceq_\rM \langle B \rangle_\rM$ if $A\preceq_\rM B$.
The element $\langle 1\rangle_\rM$ is an order-unit of $(\rW_\rM(R), \preceq_\rM)$.

Denote by $\pi$ the quotient map $M(R)\rightarrow \rW_\rM(R)$. Clearly the Sylvester matrix rank functions of $R$ are exactly $\varphi\circ \pi$ for states $\varphi$ of the partially ordered abelian semigroup with order-unit $(\rW_\rM(R), \preceq_\rM, \langle 1\rangle_\rM)$. From Corollary~\ref{C-range} we obtain the following.

\begin{proposition} \label{P-existence}
$R$ has a Sylvester rank function if and only if $I_{n+1}\not\preceq_{\rM} I_n$ for all $n\in \Nb$.
\end{proposition}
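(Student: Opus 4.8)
The plan is to exploit the stated correspondence between Sylvester matrix rank functions for $R$ and states of $(\rW_\rM(R), \preceq_\rM, \langle 1\rangle_\rM)$, together with the identity $\langle I_n\rangle_\rM = n\langle 1\rangle_\rM$ for every $n\in\Nb$. First I would record that $\rW_\rM(R)$ is a positively ordered monoid: since $0 = 0\cdot A\cdot 0\lesssim A$ for every $A\in M(R)$, we have $\langle 0\rangle_\rM\preceq_\rM\langle A\rangle_\rM$ for all $A$, so in particular $m\langle 1\rangle_\rM\preceq_\rM n\langle 1\rangle_\rM$ whenever $m\le n$. Consequently the hypothesis $I_{n+1}\not\preceq_\rM I_n$ for all $n\in\Nb$ says exactly that $(n+1)\langle 1\rangle_\rM\not\preceq_\rM n\langle 1\rangle_\rM$ for all $n$.

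For the ``only if'' direction, suppose $\rk$ is a Sylvester matrix rank function for $R$, corresponding to a state $\varphi$ of $(\rW_\rM(R), \preceq_\rM, \langle 1\rangle_\rM)$, so that $\varphi(\langle I_n\rangle_\rM) = n\varphi(\langle 1\rangle_\rM) = n$. If one had $I_{n+1}\preceq_\rM I_n$ for some $n$, then applying $\varphi$ would give $n+1\le n$, a contradiction; hence $I_{n+1}\not\preceq_\rM I_n$ for every $n\in\Nb$.

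For the ``if'' direction, assume $(n+1)\langle 1\rangle_\rM\not\preceq_\rM n\langle 1\rangle_\rM$ for all $n$. The key step is to promote this to the equivalence $m\langle 1\rangle_\rM\preceq_\rM n\langle 1\rangle_\rM \iff m\le n$: if $m>n$ and $m\langle 1\rangle_\rM\preceq_\rM n\langle 1\rangle_\rM$ held, then positivity would give $(n+1)\langle 1\rangle_\rM\preceq_\rM m\langle 1\rangle_\rM$, and transitivity would yield $(n+1)\langle 1\rangle_\rM\preceq_\rM n\langle 1\rangle_\rM$, contradicting the hypothesis. With this in hand, the submonoid $\rW_1 = \Zb_{\ge 0}\langle 1\rangle_\rM$ contains the order-unit, and the assignment $n\langle 1\rangle_\rM\mapsto n$ is a well-defined, order-preserving semigroup homomorphism, i.e.\ a state $\varphi$ of $(\rW_1, \preceq_\rM, \langle 1\rangle_\rM)$. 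Applying the extension theorem (Theorem~\ref{T-GH1}, or equivalently Corollary~\ref{C-range}) extends $\varphi$ to a state $\psi$ of $(\rW_\rM(R), \preceq_\rM, \langle 1\rangle_\rM)$, and then $\psi\circ\pi$ is the desired Sylvester matrix rank function. The only real obstacle is the equivalence $m\langle 1\rangle_\rM\preceq_\rM n\langle 1\rangle_\rM\iff m\le n$, which is what guarantees that the standard state on $\rW_1$ is well defined; once this is established, both the construction of $\varphi$ and its extension are immediate from the general theory.
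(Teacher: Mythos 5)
Your proof is correct and takes essentially the same route as the paper: the paper deduces the proposition directly from Corollary~\ref{C-range}, whose proof is precisely your argument --- the strict comparison $m\langle 1\rangle_\rM\preceq_\rM n\langle 1\rangle_\rM\iff m\le n$, the canonical state on the subsemigroup of multiples of $\langle 1\rangle_\rM$, and a state-extension theorem from the appendix. Your only (harmless) variation is to invoke the positively ordered monoid version (Theorem~\ref{T-GH1}) after observing $\langle 0\rangle_\rM\preceq_\rM\langle A\rangle_\rM$, whereas the paper's Corollary~\ref{C-range} rests on the general semigroup version (Theorem~\ref{T-extension submonoid}).
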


Recall that $R$ is said to satisfy the {\it rank condition} if for any $n\in \Nb$ the free left $R$-module $R^n$ is not generated by $n-1$ elements \cite[Definition 1.20]{Lam}.

\begin{proposition} \label{P-function to rank}
If $R$ has a Sylvester rank function, then $R$ satisfies the rank condition.
\end{proposition}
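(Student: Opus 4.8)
The plan is to derive a contradiction directly from the matrix axioms in Definition~\ref{D-Sylvester matrix}, so fix a Sylvester matrix rank function $\rk$ for $R$ (which exists by hypothesis). First I would record two elementary consequences. Iterating axiom (3) and using $\rk(1)=1$ gives $\rk(I_k)=k$ for every $k\in\Nb$. Next, for any $A\in M_{k,m}(R)$ the factorizations $A=I_kA=AI_m$ together with axiom (2) yield $\rk(A)\le\min(\rk(I_k),\rk(I_m))=\min(k,m)$; in particular a matrix with only $k$ rows has rank at most $k$. These are the only properties of $\rk$ I expect to need.

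Next I would translate a failure of the rank condition into a concrete matrix identity. Suppose, toward a contradiction, that $R^n$ is generated by $n-1$ elements $w_1,\dots,w_{n-1}$ for some $n\in\Nb$. Following the conventions of Theorem~\ref{T-bijection}, view $R^n$ as length-$n$ row vectors and collect the generators as the rows of a matrix $A\in M_{n-1,n}(R)$, so that the generating hypothesis reads $R^{n-1}A=R^n$. Applying this to the standard basis row vectors $e_1,\dots,e_n$ of $R^n$, each $e_j$ equals $b_jA$ for some $b_j\in R^{n-1}$; stacking the $b_j$ as the rows of a matrix $B\in M_{n,n-1}(R)$ produces the identity $BA=I_n$. (Conceptually this is just the splitting of the surjection $R^{n-1}\to R^n$ onto the free, hence projective, module $R^n$, made explicit on the basis.)

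Finally I would combine the two ingredients: since $A$ has $n-1$ rows, the preliminary estimate gives $\rk(A)\le n-1$, while axiom (2) applied to $BA=I_n$ gives $n=\rk(I_n)=\rk(BA)\le\rk(A)\le n-1$, a contradiction. Hence no such $A$ exists, i.e. $R^n$ is never generated by $n-1$ elements, which is exactly the rank condition.

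The step I expect to be the only real subtlety is the middle one: correctly reading ``generated by $n-1$ elements'' as the matrix identity $BA=I_n$ with the right shapes, which requires keeping the row/column and left-module conventions consistent with Theorem~\ref{T-bijection}. Everything else is a short computation with the axioms. As an alternative route avoiding the explicit splitting, one could instead pass to the associated Sylvester module rank function $\dim$ and observe that $R^{n-1}A=R^n$ makes the quotient $R^n/R^{n-1}A$ zero, so $0=\dim(R^n/R^{n-1}A)=n-\rk(A)$ forces $\rk(A)=n$, again contradicting $\rk(A)\le n-1$.
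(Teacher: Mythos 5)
Your proof is correct, and its core inequality is the very one the paper exploits, but you package it differently. The paper's proof quotes \cite[Proposition 1.24]{Lam} for the equivalence between failure of the rank condition and a factorization $AB=I_m$ with $A\in M_{m,n}(R)$, $B\in M_{n,m}(R)$, $n<m$, and then concludes inside the Malcolmson semigroup: $I_m=AI_nB\preceq_\rM I_n$ contradicts Proposition~\ref{P-existence} (of which only the easy direction is used, namely monotonicity of rank functions under $\preceq_\rM$). You instead prove the needed matrix identity from scratch, splitting the surjection $R^{n-1}\to R^n$ on the standard basis to get $BA=I_n$ with $A\in M_{n-1,n}(R)$, and then argue directly from the axioms of Definition~\ref{D-Sylvester matrix}: $n=\rk(I_n)=\rk(BA)\le\rk(A)\le n-1$, the last bound coming from $A=I_{n-1}A$ and axiom (2). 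This buys self-containedness: no appeal to Lam and no detour through $\rW_\rM(R)$ or Proposition~\ref{P-existence}; what it forgoes is the reusable semigroup-level statement $I_m\preceq_\rM I_n$ that the paper's formulation records. Your bookkeeping is consistent with the paper's conventions: with left modules and row vectors, $R^{n-1}A$ is exactly the submodule generated by the rows of $A$, so the generating hypothesis reads $R^{n-1}A=R^n$ and stacking the coefficient rows gives $BA=I_n$; and your alternative route via Theorem~\ref{T-bijection}, where $R^{n-1}A=R^n$ gives $\dim(R^n/R^{n-1}A)=0$ and hence $\rk(A)=n$, is equally valid.
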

\begin{proof} Suppose that $R$ does not satisfy the rank condition. Then there are $n<m$ in $\Nb$ and $A\in M_{m, n}(R), B\in M_{n, m}(R)$ such that $AB=I_m$ \cite[Proposition 1.24]{Lam}. If $R$ has a Sylvester matrix rank function $\rk$, then
$$ m=\rk(I_m)=\rk(AB)=\rk(AI_nB)\le \rk(I_n)=n,$$
which is impossible.
\end{proof}

It will be interesting to know whether the rank condition characterises the existence of Sylvester rank functions.

\begin{question} \label{Q-rank to function}
If $R$ satisfies the rank condition, then must $R$ have a Sylvester rank function?
\end{question}

Question~\ref{Q-rank to function} has an affirmative answer for von Neumann regular rings \cite[Theorem 18.3]{Goodearl}.
Recall that a unital ring $R$ is said to be {\it stably finite} if for any $n\in \Nb$ and $A, B\in M_n(R)$ with $AB=I_n$, one has $BA=I_n$.
It is a result of Malcolmson  that $R$ satisfies the rank condition if and only if $R$ has a stably finite quotient ring \cite[Theorem 1.26]{Lam} \cite{Malcolmson79}.
Thus Question~\ref{Q-rank to function} is equivalent to the following question.

\begin{question} \label{Q-stable finite to function}
If $R$ is stably finite, then must $R$ have a Sylvester rank function?
\end{question}

We remark that a result of Cuntz \cite[Corollary 4.5]{Cuntz78} says that  stably finite unital $C^*$-algebras have dimension functions. In view of Proposition~\ref{P-M to C} below this implies an affirmative answer of Question~\ref{Q-stable finite to function} for unital $C^*$-algebras.

As an example, we compute the matrix Malcolmson semigroup for left Artinian local rings whose maximal ideal is generated by a central element. For this we need the following lemma.

\begin{lemma} \label{L-compare power1}
Let $A\in M_n(R)$. Let $0\le k\le j$ and $0<m$ be integers. Then $\diag(A^j, A^{k+m})\preceq_\rM \diag(A^k, A^{j+m})$, where we set $A^0=I_n$.
\end{lemma}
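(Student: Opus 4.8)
The plan is to recognize this as the $\rW_\rM(R)$-level incarnation of the Frobenius rank inequality $\rk(XY)+\rk(YZ)\le\rk(Y)+\rk(XYZ)$, whose semigroup shadow should read $\diag(XY,YZ)\preceq_\rM\diag(Y,XYZ)$. Taking $X=A^{j-k}$, $Y=A^k$, $Z=A^m$ and using that powers of $A$ commute, one has $XY=A^j$, $YZ=A^{k+m}$, $Y=A^k$, and $XYZ=A^{j+m}$, so the asserted inequality $\diag(A^j,A^{k+m})\preceq_\rM\diag(A^k,A^{j+m})$ is exactly this relation. The hypotheses $0\le k\le j$ and $m>0$ guarantee all exponents are nonnegative (with $A^0=I_n$). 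I would therefore prove the needed instance directly, by exhibiting an explicit chain of $\lesssim$-moves, working down from the right-hand side to the left-hand side.

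First I realize $\diag(A^k,A^{j+m})\sim_\rM\diag(A^{j+m},A^k)$ as the block diagonal of a triangular matrix with a \emph{manufactured} off-diagonal: since $A^{j-k}A^k=A^j$, left multiplication by the invertible shear gives
\[ \begin{bmatrix} I_n & A^{j-k}\\ 0 & I_n\end{bmatrix}\begin{bmatrix} A^{j+m} & 0\\ 0 & A^k\end{bmatrix}=\begin{bmatrix} A^{j+m} & A^j\\ 0 & A^k\end{bmatrix}=:S, \]
and multiplication by an invertible matrix is reversible, hence an $\sim_\rM$, so $\diag(A^k,A^{j+m})\sim_\rM S$. Next a single column operation kills the $(1,1)$-block: right multiplication by the invertible $\left[\begin{smallmatrix} I_n & 0\\ -A^m & I_n\end{smallmatrix}\right]$, together with $A^jA^m=A^{j+m}$, turns $S$ into $\left[\begin{smallmatrix} 0 & A^j\\ -A^{k+m} & A^k\end{smallmatrix}\right]$. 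A block-row swap (again invertible) brings this to the upper-triangular form $\left[\begin{smallmatrix} -A^{k+m} & A^k\\ 0 & A^j\end{smallmatrix}\right]$, whose block-diagonal reduction gives $\diag(-A^{k+m},A^j)\lesssim\left[\begin{smallmatrix} -A^{k+m} & A^k\\ 0 & A^j\end{smallmatrix}\right]$. Finally $-B\sim_\rM B$ for every $B$ (multiply by $-I_n$) and $\diag(B,C)\sim_\rM\diag(C,B)$, so $\diag(-A^{k+m},A^j)\sim_\rM\diag(A^j,A^{k+m})$. Concatenating these links by transitivity of $\preceq_\rM$ yields $\diag(A^j,A^{k+m})\preceq_\rM\diag(A^k,A^{j+m})$.

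The only non-invertible, and hence order-creating, step is the block-diagonal reduction $\diag\lesssim$ upper-triangular; every other step is an equivalence from multiplication by invertible block matrices, plus the free facts $-B\sim_\rM B$ and symmetry of $\diag$ under $\sim_\rM$. I expect the main obstacle to be the tempting but doomed attempt to prove the statement by a single multiplication move $\diag(A^j,A^{k+m})=C\,\diag(A^k,A^{j+m})\,D$: this fails in general, because both nonzero diagonal blocks $A^j$ and $A^{k+m}$ would have to be produced from the single low-power block $A^k$ (the block $A^{j+m}$ cannot be lowered by multiplication), while one off-diagonal block is forced to vanish, and these demands conflict. The crux that resolves it is precisely the opening shear, which rewrites $\diag(A^k,A^{j+m})$ as the triangular matrix $S$ carrying the created entry $A^j$; the subsequent column operation and block reduction then let the two target blocks $A^j$ and $A^{k+m}$ emerge simultaneously.
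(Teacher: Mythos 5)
Your proof is correct and is essentially the paper's own argument: the paper proves the lemma by the same one-line chain of moves, namely a shear using $A^{j-k}$ to manufacture the off-diagonal entry $A^j$, a second shear using $A^m$ to annihilate the high-power block and create $-A^{k+m}$, invertible permutation and sign fixes, and a single block-triangular reduction $\left[\begin{smallmatrix} A^j & A^k\\ & A^{k+m}\end{smallmatrix}\right]\succeq_\rM \diag(A^j, A^{k+m})$. Your version differs only in bookkeeping (you pre-swap the diagonal blocks and apply the shears on the opposite sides), so the two arguments coincide step for step.
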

\begin{proof} For any $B, C, D\in M_l(R)$ with $B, D$ invertible, one has $BCD\sim_\rM C$. Thus using elementary row and column operations we have
\begin{align*}
\left[\begin{matrix} A^k & \\  & A^{j+m} \end{matrix}\right]&\sim_\rM \left[\begin{matrix} I_n & \\ A^m & I_n \end{matrix}\right]\cdot \left[\begin{matrix} A^k & \\  & A^{j+m} \end{matrix}\right]\cdot \left[\begin{matrix} I_n & A^{j-k}\\  & -I_n \end{matrix}\right]\cdot \left[\begin{matrix}  & I_n \\ I_n  &  \end{matrix}\right]\\
&=\left[\begin{matrix} I_n & \\ A^m & I_n \end{matrix}\right]\cdot \left[\begin{matrix} A^k & A^j\\  & -A^{j+m} \end{matrix}\right]\cdot \left[\begin{matrix}  & I_n \\ I_n  &  \end{matrix}\right]\\
&=\left[\begin{matrix} A^j & A^k \\   & A^{k+m} \end{matrix}\right].
\end{align*}
From the second case in the definition of $\lesssim$ one also has
$$ \left[\begin{matrix} A^j & A^k\\ & A^{k+m} \end{matrix}\right]\gtrsim \left[\begin{matrix} A^j & \\  & A^{k+m} \end{matrix}\right].$$
Now the assertion follows.
\end{proof}

Recall that a unital ring $R$ is {\it local} if the set of non-invertible elements of $R$ is an ideal \cite[Section 19]{Lam01}. In such case, this ideal is equal to the Jacobson radical $\rad(R)$ of $R$. Also recall that if a unital ring $R$ is {\it left Artinian}, i.e. there is no infinite strictly descending chain of left ideals $\cJ_1\supsetneq \cJ_2\supsetneq \cdots$ \cite[page 19]{Lam01}, then $\rad(R)$ is nilpotent \cite[Theorem 4.12]{Lam01}, i.e. $(\rad(R))^n=0$ for some $n\in \Nb$.

Let $R$ be a left Artinian local ring such that $\rad(R)$ is generated by some central element $c$. Let $n$ be the smallest positive integer such that $c^n=0$. Examples include $R=\Db[x]/x^n \Db[x]$, where $\Db$ is a division ring and $\Db[x]$ is the  polynomial ring over $\Db$, and $R=\Zb/p^n\Zb$, where $p$ is a prime number.

By \cite[Corollary 2.4]{JL22} for each $1\le k\le n$ there is a $\rk_k\in \Pb(R)$ satisfying
\begin{align} \label{E-extreme}
\rk_k(c^i)=\begin{cases}
\frac{k-i}{k} &\text{ if $0\le i< k$}\\
0 &\text{ if $k\le i\le n-1$}.
\end{cases}
\end{align}
It is further shown in \cite[Corollary 2.4]{JL22} that $\rk_1, \dots, \rk_n$ are exactly the extreme points of the convex set $\Pb(R)$, which we shall need later in Example~\ref{E-nilpotent}. Below we shall identify $\rk\in \Pb(R)$ with the corresponding state of $(\rW_\rM(R), \preceq_\rM, \langle 1\rangle_\rM)$.

\begin{proposition} \label{P-local}
Let $R$ be a left Artinian unital local ring such that $\rad(R)$ is generated by a central element $c$. Let $n$ be the smallest positive integer such that $c^n=0$. Then $\rW_\rM(R)$ is the free abelian monoid generated by $\langle c^i\rangle_\rM$ for $i=0, 1, \dots, n-1$, i.e. $\rW_\rM(R)=\bigoplus_{0\le i\le n-1}\Zb_{\ge 0}\langle c^i\rangle_\rM$. Furthermore, for any $a, b\in \rW_\rM(R)$, the following are equivalent:
\begin{enumerate}
\item $a\preceq_\rM b$;
\item $\rk(a)\le \rk(b)$ for all $\rk\in \Pb(R)$;
\item $\rk_k(a)\le \rk_k(b)$ for all $k=1, \dots, n$.
\end{enumerate}
\end{proposition}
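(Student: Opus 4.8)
The plan is to first pin down the algebraic structure of $\rW_\rM(R)$, thereby proving the freeness assertion, and then to analyze the order through the rank functions $\rk_1,\dots,\rk_n$.

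I would begin with the elementary arithmetic of $R$. Since $R$ is local with $\rad(R)=cR=Rc$ (as $c$ is central) and $c^n=0$, every nonzero $r\in R$ factors as $r=c^i u$ with $u$ invertible and $0\le i\le n-1$: peel off factors of $c$ until a unit remains, the process stopping before exponent $n$ because $c^n=0\ne r$. Using this I would put an arbitrary rectangular matrix over $R$ into a Smith normal form $\diag(c^{i_1},\dots,c^{i_r})$ by invertible row and column operations, repeatedly moving an entry of least $c$-valuation to the pivot slot, rescaling it to a power of $c$ by a unit, and clearing its row and column; clearing is legitimate because, $c$ being central, a pivot $c^i$ both left- and right-divides every entry of valuation $\ge i$. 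Invertible operations preserve $\sim_\rM$ (if $A=UBV$ with $U,V$ invertible then $A\preceq_\rM B$ and $B=U^{-1}AV^{-1}\preceq_\rM A$), and diagonal summands equal to $0=c^{\ge n}$ drop out, so $\langle A\rangle_\rM=\sum_l\langle c^{i_l}\rangle_\rM$. Hence $\langle c^0\rangle_\rM,\dots,\langle c^{n-1}\rangle_\rM$ generate $\rW_\rM(R)$.

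To see these generators are free, I would consider the additive map $\Phi=(\rk_1,\dots,\rk_n)\colon\rW_\rM(R)\to\Rb^n$. On generators $\Phi(\langle c^i\rangle_\rM)=v_i$ with $(v_i)_k=\rk_k(c^i)=\max(0,(k-i)/k)$, so $v_i$ vanishes in coordinates $k\le i$ and is positive in coordinates $k>i$; this triangular vanishing pattern forces $v_0,\dots,v_{n-1}$ to be linearly independent. Consequently the canonical surjection $\Zb_{\ge 0}^n\to\rW_\rM(R)$, $e_i\mapsto\langle c^i\rangle_\rM$, becomes injective after composing with $\Phi$ (it is then the restriction of an injective linear map), hence is itself injective, giving $\rW_\rM(R)=\bigoplus_{0\le i\le n-1}\Zb_{\ge 0}\langle c^i\rangle_\rM$.

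For the order equivalences, $(1)\Rightarrow(2)$ is immediate since the elements of $\Pb(R)$ are exactly the states of $(\rW_\rM(R),\preceq_\rM,\langle 1\rangle_\rM)$ and states are order preserving, and $(2)\Rightarrow(3)$ is trivial because each $\rk_k\in\Pb(R)$. Writing $a=\sum_i a_i\langle c^i\rangle_\rM$ and $b=\sum_i b_i\langle c^i\rangle_\rM$, condition $(3)$ becomes $S_k(a)\le S_k(b)$ for $k=1,\dots,n$, where $S_k(x)=\sum_{i<k}(k-i)x_i=k\,\rk_k(x)$.

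The real content is $(3)\Rightarrow(1)$. From Lemma~\ref{L-compare power1} with $A=[c]$ I would extract two families of elementary down-moves in $\rW_\rM(R)$: the shrink $\langle c^{i+1}\rangle_\rM\preceq_\rM\langle c^i\rangle_\rM$ (from $c^{i+1}=c\cdot c^i$), and the balancing $\langle c^{\alpha+1}\rangle_\rM+\langle c^{\beta-1}\rangle_\rM\preceq_\rM\langle c^\alpha\rangle_\rM+\langle c^\beta\rangle_\rM$ for $\alpha<\beta$, where an exponent $\ge n$ denotes $0=\langle 0\rangle_\rM$, so that the balancing move also realizes the ``splitting'' of a generator by drawing on a trivial summand. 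Passing to the size coordinate $s=n-i$ turns a class into a partition with parts $\le n$, the shrink into deleting a box, the balancing into the classical covering move of the dominance order (transfer a box from a larger part to a smaller one), and $S_k$ into the majorization functional $\sum_{\text{parts}}\max(0,\,s-(n-k))$. I would then prove the combinatorial statement that if these functionals of $a$ are dominated by those of $b$, then $a$ is reachable from $b$ by shrink and balancing moves: first delete $|b|-|a|$ boxes from $b$, chosen from the smallest parts, to reach an equal-size partition still dominating $a$, and then invoke that on partitions of equal size the dominance order is generated by balancing moves. The main obstacle is precisely this lemma --- in particular, checking that the deleted boxes can always be chosen so that every majorization inequality is preserved en route (equivalently, that whenever $a\ne b$ there is a single safe atomic move strictly decreasing $\sum_k(S_k(b)-S_k(a))$ without breaking domination); this is exactly where the full strength of Lemma~\ref{L-compare power1}, including its wraparound through $0$, is needed.
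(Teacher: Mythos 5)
Your proposal is correct, and on the one substantive implication, (3)$\Rightarrow$(1), it takes a genuinely different route from the paper. The generation and freeness parts coincide in substance with the paper's (the paper likewise diagonalizes matrices using units and the valuation by powers of $c$, and separates the generators through the integer values $k\cdot\rk_k$), so the comparison concerns only the order. The paper works directly in $\rW_\rM(R)$ by induction on $\|a\|_1$: it cancels any generator in $\supp(a)\cap\supp(b)$, and otherwise applies a single balancing move from Lemma~\ref{L-compare power1} to the pair $(j_1,j_2)$ of consecutive exponents in $\supp(b)$ straddling $\min\supp(a)$ (with $j_2=n$ supplying exactly your ``wraparound through $0$''), then shows by an integrality argument --- each $k\cdot\rk_k$ is integer-valued and drops by at most $1$ under the move --- that all $n$ inequalities survive; iterating recreates a common generator and the induction closes. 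Your route instead identifies $(\rW_\rM(R),\preceq_\rM)$ with the majorization order on partitions with parts bounded by $n$ and outsources the equal-size case to the classical theorem (Muirhead/Hardy--Littlewood--P\'olya; Brylawski) that dominance between partitions of the same integer is generated by single-box transfers from a larger part to a smaller one; note these transfers never create a part exceeding $n$, so all intermediate classes do exist in $\rW_\rM(R)$. The obstacle you flag --- safety of the $|b|-|a|$ deletions --- is genuine but surmountable, and here is the verification completing your sketch: write $T_t(\lambda)=\sum_j\max(0,\lambda_j-t)$ and $f(t)=T_t(b)-T_t(a)\ge 0$, and set $t_0=\min\{t\ge 1: f(t)=0\}$, which exists under the convention $f(n)=0$. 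Deleting a box from a part of size $s$ lowers $f$ exactly at $t=0,\dots,s-1$, so any part of size at most $t_0$ is a safe target. If every part of $b$ had size greater than $t_0$ (so $b$ has no part of size exactly $t_0$), then the second difference $f(t_0-1)-2f(t_0)+f(t_0+1)=\#\{j: b_j=t_0\}-\#\{j: a_j=t_0\}\le 0$ would force $f(t_0-1)=f(t_0+1)=0$, contradicting minimality of $t_0$ when $t_0\ge 2$, and contradicting $f(0)=|b|-|a|>0$ when $t_0=1$; hence the smallest part of $b$ has size at most $t_0$, your greedy deletion is always safe, and induction on $|b|-|a|$ finishes the equalization. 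What each approach buys: the paper's double induction is entirely self-contained and never leaves the semigroup, while yours makes the structural content transparent --- the order on $\rW_\rM(R)$ is precisely bounded-partition majorization, with Lemma~\ref{L-compare power1} supplying the Robin Hood moves --- at the cost of the deletion lemma above and a citation for the equal-size case.
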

\begin{proof}
Note that every nonzero element of $R$ is of the form $xc^i$ for some invertible $x\in R$ and $0\le i\le n-1$.
We claim that for any $A\in M_m(R)$, there are some invertible $B, C\in M_m(R)$ such that $BAC=\diag(c^{i_1}, \dots, c^{i_l}, 0, \dots, 0)$ for some $0\le l\le m$ and $0\le i_1, \dots, i_l\le n-1$. We argue by induction on $m$. The case $m=1$ is obvious. Assume that the claim holds for matrices in $M_{m-1}(R)$. Let $A\in M_m(R)$. The claim is trivial if $A$ is the zero matrix. Thus we may assume that $A$ is nonzero. Writing each nonzero entry of $A$ as $xc^i$ for some invertible $x\in R$ and $0\le i\le n-1$, we can find one nonzero entry of $A$ with the smallest exponent $i$. Via exchanging some rows and columns of $A$, which be can achieved via replacing $A$ by $BAC$ for some invertible $B, C\in M_m(R)$, we may assume that this entry is at the upper-left corner of $A$. Via elementary row and column operations, which can be achieved again via  replacing $A$ by $BAC$ for some invertible $B, C\in M_m(R)$, we may use this entry to eliminate all other entries in the first row and column of $A$ so that $A$ becomes $\diag(xc^i, A')$ for some  invertible $x\in R$, some $0\le i\le n-1$, and some $A'\in M_{m-1}(R)$. By induction hypothesis we have $B'A'C'=\diag(c^{i_1}, \dots, c^{i_l}, 0, \dots, 0)$ for some invertible $B', C'\in M_{m-1}(R)$, some $0\le l\le m-1$ and $0\le i_1, \dots, i_l\le n-1$. Then $B=\diag(x^{-1}, B')$ and $C=\diag(1, C')$ are invertible in $M_m(R)$, and $BAC=\diag(c^i, c^{i_1}, \dots, c^{i_l}, 0, \dots, 0)$. This finishes the induction step and proves our claim.

From the above claim it follows that $\rW_\rM(R)$ is generated by $\langle c^i\rangle_\rM$ for $i=0, 1, \dots, n-1$.

For each $a\in \rW_\rM(R)$, write $a$ as $\sum_{i=0}^{n-1}a_i\langle c^i\rangle_\rM$ with $a_0, \dots, a_{n-1}\in \Zb_{\ge 0}$. If $a=b$, then $$\sum_{i=0}^{k-1}(k-i)a_i=k\cdot \rk_k(a)=k\cdot \rk_k(b)=\sum_{i=0}^{k-1}(k-i)b_i$$ for every $1\le k\le n$, from which it follows that $a_i=b_i$ for all $0\le i\le n-1$. Thus $\rW_\rM(R)$ is the free abelian monoid generated by $\langle c^i\rangle_\rM$ for $i=0, 1, \dots, n-1$. For each $a\in \rW_\rM(R)$, denote by $\supp(a)$ the set of $0\le i\le n-1$ with $a_i\neq 0$, and set $\|a\|_1=\sum_{i=0}^{n-1}a_i$.

(1)$\Rightarrow$(2)$\Rightarrow$(3) are trivial. To prove (3)$\Rightarrow$(1), let $a, b\in \rW_\rM(R)$ such that
$\rk_k(a)\le \rk_k(b)$ for all $k=1, \dots, n$. We shall show $a\preceq_\rM b$ by induction on $\|a\|_1$. The case $\|a\|_1=0$ is trivial.
Assume that the claim holds when $\|a\|_1=l$. Consider the case $\|a\|_1=l+1$.

If there is some $i\in \supp(a)\cap \supp(b)$, then $a-\langle c^i\rangle_\rM, b-\langle c^i\rangle_\rM\in \rW_\rM(R)$,  $\rk_k(a-\langle c^i\rangle_\rM)\le \rk_k(b-\langle c^i\rangle_\rM)$ for all $1\le k\le n$, and $\|a-\langle c^i\rangle_\rM\|_1=l$, thus by the induction hypothesis we have $a-\langle c^i\rangle_\rM\preceq_\rM b-\langle c^i\rangle_\rM$, whence $a\preceq_\rM b$. Therefore we may assume that $\supp(a)\cap \supp(b)=\emptyset$.

Put $i_a=\min(\supp(a))$ and $i_b=\min(\supp(b))$. If $i_a<i_b$, then $\rk_{i_a+1}(a)>0=\rk_{i_a+1}(b)$, which is impossible. Thus $i_a>i_b$. Denote by $j_1$ the largest element of $\supp(b)$ smaller than $i_a$.
If $j_1\neq \max(\supp(b))$, let $j_2$ be the smallest element in $\supp(b)$ greater than $j_1$. Otherwise, put $j_2=n$.
Then $i_b\le j_1<i_a<j_2$.
By Lemma~\ref{L-compare power1} we have
$$\langle c^{j_1}\rangle_\rM+\langle c^{j_2}\rangle_\rM\succeq_\rM \langle c^{j_1+1}\rangle_\rM+\langle c^{j_2-1}\rangle_\rM.$$ Put
$$b^{(1)}=b-\langle c^{j_1}\rangle_\rM-\langle c^{j_2}\rangle_\rM+\langle c^{j_1+1}\rangle_\rM+\langle c^{j_2-1}\rangle_\rM\in \rW_\rM(R).$$
Then $b^{(1)}\preceq_\rM b$.

We claim that $\rk_k(a)\le \rk_k(b^{(1)})$ for all $1\le k\le n$. Assume that $\rk_k(a)>\rk_k(b^{(1)})$ for some $1\le k\le n$ instead. Then $\rk_k(b)\ge \rk_k(a)>\rk_k(b^{(1)})$, whence $j_1<k<j_2$. Note that $k\cdot \rk_k(b), k\cdot \rk_k(a), k\cdot \rk_k(b^{(1)})$ are all integers, and $k\cdot \rk_k(b)=k\cdot \rk_k(b^{(1)})+1$. Thus
$$k\cdot \rk_k(b)=k\cdot \rk_k(a)=k\cdot \rk_k(b^{(1)})+1.$$  We have
$$ k\cdot \rk_k(b)=\sum_{i\in \supp(b), i<k}(k-i)b_i\ge (k-j_1)\sum_{i\in \supp(b), i<k}b_i>0,$$
and
$$k\cdot \rk_k(a)=\sum_{i\in \supp(a), i<k}(k-i)a_i\le (k-i_a)\sum_{i\in \supp(a), i<k}a_i.$$
Since $k-i_a<k-j_1$, we obtain $0<k-i_a$ and
$$\sum_{i\in \supp(b), i<k}b_i<\sum_{i\in \supp(a), i<k}a_i.$$
Note that
$$(k+1)\cdot \rk_{k+1}(b)=k\cdot \rk_k(b)+\sum_{i\in \supp(b), i\le k}b_i=k\cdot \rk_k(b)+\sum_{i\in \supp(b), i<k}b_i,$$
whence
\begin{align*}
(k+1)\cdot \rk_{k+1}(a)&=k\cdot \rk_k(a)+\sum_{i\in \supp(a), i\le k}a_i\ge k\cdot \rk_k(a)+\sum_{i\in \supp(a), i<k}a_i\\
&>k\cdot \rk_k(b)+\sum_{i\in \supp(b), i<k}b_i=(k+1)\cdot \rk_{k+1}(b),
\end{align*}
which is impossible. This proves our claim.

Iterating the above construction, we find $b^{(1)}, \dots, b^{(m)}$ in $\rW_\rM(R)$ such that $b\succeq_\rM b^{(1)}\succeq_\rM \dots \succeq_\rM b^{(m)}$, and $\rk_k(a)\le \rk_k(b^{(m)})$ for all $1\le k\le n$, and $\supp(a)\cap \supp(b^{(m)})\neq \emptyset$. As at the beginning of the induction step, we conclude $b^{(m)}\succeq_\rM a$. Therefore $b\succeq_\rM a$. This finishes the induction step and proves (3)$\Rightarrow$(1).
\end{proof}

\section{Sylvester rank functions for commutative rings} \label{S-commutative}

In this section we use the matrix Malcolmson semigroup to construct some interesting Sylvester matrix rank functions for commutative rings in Theorem~\ref{T-rk for square}. Throughout this section, $R$ will be a unital commutative ring.

For a unital ring $R$, recall that $a\in R$ is {\it nilpotent} if $a^n=0$ for some $n\in \Nb$.

\begin{proposition} \label{P-rank for commutative}
The following hold:
\begin{enumerate}
\item $\Pb_{\rm reg}(R)\neq \emptyset$.
\item If $a\in R$ is invertible, then
$$\{\rk(a): \rk\in \Pb_{\rm reg}(R)\}=\{\rk(a): \rk\in \Pb(R)\}=\{1\}.$$
\item If $a\in R$ is neither invertible nor nilpotent, then
$$\{\rk(a): \rk\in \Pb_{\rm reg}(R)\}=[0, 1].$$
\item If $a\in R$ with $a^m=0$ for some $m\in \Nb$, then 
$$\{0\}\subseteq \{\rk(a): \rk\in \Pb_{\rm reg}(R)\}\subseteq \{\rk(a): \rk\in \Pb(R)\}\subseteq [0, (m-1)/m].$$
\end{enumerate}
\end{proposition}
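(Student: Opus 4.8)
The plan is to dispatch parts (1) and (2) by elementary means and then reduce part (3) to producing two endpoints and invoking convexity.

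For (1), since $R$ is a nonzero commutative ring it has a maximal ideal $\mathfrak{m}$, and $K=R/\mathfrak{m}$ is a field, hence von Neumann regular. The field $K$ has a unique Sylvester matrix rank function $\rk_K$, and pulling it back along the quotient map $R\to K$ yields an element of $\Pb_{\rm reg}(R)$ directly from Definition~\ref{D-regular rank}; thus $\Pb_{\rm reg}(R)\neq\emptyset$. For (2), I would first observe that the stronger statement $\{\rk(a)\}\subseteq\{1\}$ holds already for every $\rk\in\Pb(R)$: if $a$ is invertible, then Definition~\ref{D-Sylvester matrix}(2) gives $\rk(a)\le\rk(1)=1$ (from $a=1\cdot a$) and also $1=\rk(1)=\rk(aa^{-1})\le\rk(a)$, so $\rk(a)=1$. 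Since $\Pb_{\rm reg}(R)\subseteq\Pb(R)$ is nonempty by (1), both value sets collapse to $\{1\}$.

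For (3), the inclusion $\{\rk(a):\rk\in\Pb_{\rm reg}(R)\}\subseteq[0,1]$ is immediate from $0\le\rk(a)\le\rk(1)=1$. For the reverse inclusion I would exploit that $\Pb_{\rm reg}(R)$ is convex (stated just after Definition~\ref{D-regular rank}) and that the evaluation $\rk\mapsto\rk(a)$ at the $1\times1$ matrix $a$ is affine: it then suffices to exhibit $\rk_0,\rk_1\in\Pb_{\rm reg}(R)$ with $\rk_0(a)=0$ and $\rk_1(a)=1$, since the convex combinations $(1-t)\rk_0+t\,\rk_1$ realize every value $t\in[0,1]$. To build $\rk_0$, note that since $a$ is not invertible the ideal $(a)$ is proper and lies in some maximal ideal $\mathfrak{m}$; the pullback of $\rk_{R/\mathfrak{m}}$ along $R\to R/\mathfrak{m}$ sends $a$ to $0$, so $\rk_0(a)=0$. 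To build $\rk_1$, note that since $a$ is not nilpotent the localization $R_a=R[a^{-1}]$ is nonzero, hence has a maximal ideal $\mathfrak{n}$ with $K'=R_a/\mathfrak{n}$ a field; the composite $R\to R_a\to K'$ carries $a$ to a unit of $K'$, which is a nonzero scalar, so the pulled-back rank function $\rk_1$ satisfies $\rk_1(a)=1$. Both $\rk_0$ and $\rk_1$ are regular because fields are von Neumann regular.

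The load-bearing content is entirely in the two localization/quotient constructions of (3), where the hypotheses ``not invertible'' and ``not nilpotent'' are each used exactly once, to make $(a)$ proper and to make $R_a$ nonzero respectively; once these maps to fields are in hand, convexity finishes the argument. I do not anticipate a serious obstacle beyond confirming that pullback along a homomorphism to a field lands in $\Pb_{\rm reg}(R)$, which is immediate since a field is von Neumann regular with a unique, explicitly understood rank function.
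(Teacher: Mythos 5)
Your proposal is correct, and for parts (1), (2), and the $\rk_0$ endpoint of (3) it follows the paper's proof essentially verbatim (the paper compresses (2) to ``follows from (1)'', and your elementary inequalities $\rk(a)\le\rk(1)=1$ and $1=\rk(aa^{-1})\le\rk(a)$ are exactly the intended filling-in). The one place you diverge is the construction of $\rk_1$ with $\rk_1(a)=1$: the paper applies Zorn's lemma to get an ideal $\cJ$ maximal among ideals disjoint from $\{a^n: n\in \Nb\}$, checks that $\cJ$ is prime, and pulls back the rank function of the \emph{fraction field} of the domain $R/\cJ$; you instead localize, noting $R[a^{-1}]\neq 0$ precisely because $a$ is not nilpotent, take a maximal ideal $\mathfrak{n}$ there, and pull back along $R\to R[a^{-1}]\to R[a^{-1}]/\mathfrak{n}$. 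The two constructions are interchangeable --- both produce a unital homomorphism from $R$ to a field carrying $a$ to a nonzero element, and both ultimately rest on Zorn's lemma (yours through the existence of $\mathfrak{n}$) --- but yours lands in a field in one step and so avoids the fraction-field passage, while the paper's avoids localization and exhibits the prime ideal explicitly. Your remaining steps (regularity of the pullbacks because fields are von Neumann regular, and convexity of $\Pb_{\rm reg}(R)$ together with affineness of $\rk\mapsto\rk(a)$ to fill in $[0,1]$) are exactly the paper's closing argument.
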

\begin{proof}
(1). By Zorn's lemma $R$ has a maximal ideal $\cJ$. Then $R/\cJ$ is a field, and hence $R/\cJ$ has a unique Sylvester matrix rank function. It pulls back to a regular Sylvester matrix rank function for $R$.

(2) follows from (1).

(3). Let $a\in R$ such that $a$ is not invertible. Then by Zorn's lemma $a$ is contained in some maximal ideal $\cJ$ of $R$. As in the proof of (1), $R$ has a unique regular Sylvester matrix rank function vanishing on $\cJ$. Thus $0\in \{\rk(a): \rk\in \Pb_{\rm reg}(R)\}$.

Let $a\in R$ such that $a$ is not nilpotent.
By Zorn's lemma $R$ has an ideal $\cJ$ being maximal among the ideals which do not intersect with $\{a^n: a\in \Nb\}$. It is easily checked that $\cJ$ is a prime ideal of $R$. Then the fraction field of $R/\cJ$ has  a unique Sylvester matrix rank function. It pulls back to a regular Sylvester matrix rank function $\rk$ for $R$. Clearly $\rk(a)=1$. Thus $1\in \{\rk(a): \rk\in \Pb_{\rm reg}(R)\}$.

The assertion follows from the above two paragraphs and the fact that $\Pb_{\rm reg}(R)$ is a convex set.

(4). Since $a$ is not invertible, the proof of (3) shows that $\{0\}\subseteq \{\rk(a): \rk\in \Pb_{\rm reg}(R)\}$.

When $m=1$ one has $a=0$, whence $\rk(a)=0$ for every $\rk\in \Pb(R)$. Thus we may assume that $m\ge 2$. 
From Lemma~\ref{L-compare power1} we have $\langle a^j\rangle_{\rM}+\langle a\rangle_\rM \preceq_\rM \langle 1\rangle_\rM+\langle a^{j+1}\rangle_\rM$ for every $j\in \Zb_{\ge 0}$. Then
\begin{align*}
 m\langle a\rangle_\rM &\preceq_\rM \langle 1\rangle_\rM+\langle a^2\rangle_\rM+(m-2)\langle a\rangle_\rM \\
 &\preceq_\rM 2\langle 1\rangle_\rM+\langle a^3\rangle_\rM+(m-3)\langle a\rangle_\rM \\
 &\preceq_\rM\dots \preceq_\rM (m-1) \langle 1\rangle_\rM+\langle a^m \rangle_\rM=(m-1)\langle 1\rangle_\rM. 
 \end{align*}
Thus for every $\rk\in \Pb(R)$ one has $m\cdot \rk(a)\le m-1$. 
\end{proof}

\begin{example} \label{E-nilpotent}
Let $R=\Kb[x]/x^n \Kb[x]$ for some field $\Kb$ and some even integer $n>6$. Put $c=x+x^n\Kb[x]\in R$ and $a=c^{\frac{n}{2}-1}\in R$. Then $a^3=0$ and $a^2\neq 0$.
By \cite[Corollary 2.4]{JL22} the extreme points of $\Pb(R)$ are $\rk_1, \rk_2, \dots, \rk_n$ satisfying \eqref{E-extreme}. Thus
$$\max_{\rk \in \Pb(R)} \rk(a)=\max_{1\le k\le n}\rk_k(a)=\rk_n(a)=\frac{n+2}{2n}<\frac{2}{3}.$$
This shows that the inclusion $\{\rk(a): \rk\in \Pb(R)\}\subseteq [0, (m-1)/m]$ in Proposition~\ref{P-rank for commutative}.(4) may be proper even when $m$ is the smallest $j\in \Nb$ with $a^j=0$.
\end{example}

Recall that for any $A\in M_{m, n}(R)$ and any integer $0<k\le \min(m, n)$, a {\it $k\times k$-minor} is the determinant of a $k\times k$-submatrix of $A$. As a convention, for any integer $k>\min(m, n)$, we say that $0$ is  the unique $k\times k$-minor of $A$.

\begin{lemma} \label{L-minor}
Let $A, B\in M(R)$ such that $A\preceq_\rM B$. Let $\cJ$ be an ideal of $R$ and $k\in \Nb$ such that every $k\times k$-minor of $B$ lies in $\cJ$. Then every $k\times k$-minor of $A$ lies in $\cJ$.
\end{lemma}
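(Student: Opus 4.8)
The plan is to reduce to a single application of $\lesssim$ and then treat its two forms separately, the main tool being the behaviour of the ideals generated by $k\times k$-minors. Since $A\preceq_\rM B$ means there is a chain $A=A_1\lesssim A_2\lesssim\dots\lesssim A_n=B$, a downward induction along the chain reduces the lemma to the single-step claim: if $A'\lesssim B'$ and every $k\times k$-minor of $B'$ lies in $\cJ$, then every $k\times k$-minor of $A'$ lies in $\cJ$. Throughout, for a matrix $M$ and index sets $S,T$ I write $M_{S,T}$ for the submatrix on rows $S$ and columns $T$. The definition of $\lesssim$ gives two cases.

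In the first case $A'=CB'D$ for some $C,D\in M(R)$. Here I would apply the generalized Cauchy--Binet formula (valid over any commutative ring) twice, to $CB'$ and then to $(CB')D$, obtaining for the minor on rows $S$ and columns $T$
\[
\det(A'_{S,T})=\sum_{U,V}\det(C_{S,U})\,\det(B'_{U,V})\,\det(D_{V,T}),
\]
where $U$ and $V$ range over the $k$-element subsets of the rows and columns of $B'$, respectively. Each factor $\det(B'_{U,V})$ is a $k\times k$-minor of $B'$ and so lies in $\cJ$; since $\cJ$ is an ideal, every summand, and hence the whole sum, lies in $\cJ$ (if $B'$ has fewer than $k$ rows or columns the sum is empty and $\det(A'_{S,T})=0$). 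This is just the standard fact that the ideal generated by the $k\times k$-minors of $CB'D$ is contained in the ideal generated by the $k\times k$-minors of $B'$.

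In the second case $A'=\diag(C,D)$ and $B'=\left[\begin{smallmatrix}C&E\\&D\end{smallmatrix}\right]$, so $A'$ and $B'$ have the same size and the same block index structure. I would show that each $k\times k$-minor of $A'$ is either $0$ or equal to the $k\times k$-minor of $B'$ on the same rows and columns. Fixing $S,T$ with $|S|=|T|=k$, split them according to the two diagonal blocks as $S=S_1\sqcup S_2$, $T=T_1\sqcup T_2$, and set $a=|S_1|$, $c=|T_1|$. The matrix $A'_{S,T}$ has both off-diagonal blocks zero, whereas $B'_{S,T}$ is block upper triangular (its lower-left block vanishes because that block of $B'$ is $0$). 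If $a\neq c$, then one of the two families of rows is supported on strictly fewer columns than its cardinality, and a pigeonhole argument on the Leibniz expansion forces $\det(A'_{S,T})=0$. If $a=c$ (hence $|S_2|=|T_2|$), both diagonal blocks are square and both determinants collapse to $\det(C_{S_1,T_1})\det(D_{S_2,T_2})$, the off-diagonal block $E$ contributing nothing to a block-triangular determinant; thus $\det(A'_{S,T})=\det(B'_{S,T})\in\cJ$. In either case the minor of $A'$ lies in $\cJ$.

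The two determinant identities are standard, so the only place that needs care is the bookkeeping in the second case: correctly aligning the row/column splitting across $A'$ and $B'$, and verifying that unequal block sizes $a\neq c$ force the corresponding minor of $A'$ to vanish. This, however, is elementary linear algebra over a commutative ring, and once it is in place the downward induction along the chain completes the argument.
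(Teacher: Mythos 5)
Your proof is correct and takes essentially the same route as the paper's: reduce to a single application of $\lesssim$, show in the product case that every $k\times k$-minor of $A'$ lies in the ideal generated by the $k\times k$-minors of $B'$, and in the block case observe that a minor of $\diag(C,D)$ is either zero (when the diagonal blocks selected are not square) or equals the corresponding block-upper-triangular minor of $B'$. The only cosmetic difference is that you invoke the Cauchy--Binet formula directly, applied twice to handle $A'=CB'D$ in one step, whereas the paper splits this case into $A=BC$ and $A=CB$ and rederives the same containment by multilinearity of the determinant in columns (resp.\ rows).
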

\begin{proof} It suffices to consider three cases: $A=BC$ for some $C\in M(R)$, $A=CB$ for some $C\in M(R)$, and $B=\left[\begin{matrix} C & E\\ & D \end{matrix}\right]$ and $A=\left[\begin{matrix} C & \\ & D \end{matrix}\right]$ for some $C, D, E\in M(R)$.

Consider first the case $A=BC$ for some $C\in M(R)$. Say, $B\in M_{m, n}(R)$ and $C\in M_{n, l}(R)$. We may assume that $k\le \min(m, l)$. Let $A'$ be a $k\times k$-submatrix of $A$. Then $A'=B'C'$ for some $k\times n$-submatrix $B'$ of $B$ and some $n\times k$-submatrix $C'$ of $C$. Thus each column of $A'$ is a linear combination of columns of $B'$ with coefficients being entries of $C'$. It follows that $\det(A')$ is a finite sum of elements of the form $\pm c\det(A'')$ such that $c$ is a product of entries in $C'$ and $A''\in M_{k}(R)$ with each column of $A''$ being a column of $B'$. If $A''$ has two identical columns, then $\det(A'')=0$. Otherwise, $A''$ is a $k\times k$-submatrix of $B$ with the columns permuted, whence $\det(A'')\in \cJ$. Therefore, $\det(A')\in \cJ$.

The case $A=CB$ for some $C\in M(R)$ is handled similarly, via considering rows instead of columns.

Finally consider the case $B=\left[\begin{matrix} C & E\\ & D \end{matrix}\right]$ and $A=\left[\begin{matrix} C & \\ & D \end{matrix}\right]$ for some $C, D, E\in M(R)$. Again, we may assume that $A$ has at least $k$ rows and at least $k$ columns. Let $A'$ be a $k\times k$-submatrix of $A$. Then $A'=\left[\begin{matrix} C' & \\ & D' \end{matrix}\right]$ for some submatrix $C'$ of $C$ and some submatrix $D'$ of $D$. Note that there is a submatrix $E'$ of $E$ such that $B'=\left[\begin{matrix} C' & E'\\ & D' \end{matrix}\right]$ is a $k\times k$-submatrix of $B$. If one of $C'$ and $D'$ is not a square matrix, then $\det(A')=0\in \cJ$. Thus we may assume that both $C'$ and $D'$ are square matrices. Then $\det(A')=\det(C')\det(D')=\det(B')\in \cJ$.
\end{proof}

\begin{lemma} \label{L-upper bound}
Let $a\in R$, and  let $m, n, j, k, l\ge 0$. Assume that $a^{2|n-k|+1}\not\in Ra^{2|n-k|+2}$. Also assume that $k\langle 1\rangle_\rM+j\langle a^2\rangle_\rM +m\langle a\rangle_\rM\preceq_\rM n\langle 1\rangle_\rM+l\langle a^2\rangle_\rM$. Then $m\le 2(n-k)$.
\end{lemma}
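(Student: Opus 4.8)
The plan is to unwind the semigroup inequality into a concrete subequivalence of block-diagonal matrices and then exploit the minor-divisibility recorded in Lemma~\ref{L-minor}. By the definition of the order on $\rW_\rM(R)$, the hypothesis says $A\preceq_\rM B$, where $A=\diag(I_m, aI_k, a^2I_j)$ and $B=\diag(I_n, a^2I_l)$ ($I_m$ being the $m\times m$ identity, $aI_k$ the diagonal matrix with $k$ entries equal to $a$, and so on); both matrices are square. The whole argument consists of choosing a well-adapted minor size $t$ and a principal ideal $\cJ=Ra^{P}$ so that every $t\times t$-minor of $B$ lies in $\cJ$ while $A$ has a $t\times t$-minor that the hypothesis forbids from lying in $\cJ$; Lemma~\ref{L-minor} then yields the desired contradiction.

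The computation underlying the choice of $t$ and $\cJ$ is the elementary fact that, for a diagonal matrix, a $t\times t$-submatrix has nonzero determinant only when its row index set equals its column index set, in which case the determinant is the product of the corresponding diagonal entries. Applied to $B$, any nonzero $t\times t$-minor is $a^{2q}$, where $q$ is the number of chosen positions from the $a^2$-block; since at most $n$ positions come from the identity block, always $q\ge t-n$. Applied to $A$, the matrix $\diag(I_m, aI_k, a^2I_j)$ has a $t\times t$-minor equal to $a^{u+2v}$ whenever $u\le k$, $v\le j$ and $t=m+u+v$; in particular, taking $u=t-m$ and $v=0$ produces the minor $a^{t-m}$ as soon as $t-m\le k$.

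For the main case assume $n\ge m$ and write $d=n-m$, so $|n-m|=d$; arguing by contradiction, suppose $k>2d$, i.e. $k\ge 2d+1$. I set $t=n+d+1$ and $\cJ=Ra^{2d+2}$. On the one hand, every $t\times t$-minor of $B$ is either $0$ or of the form $a^{2q}$ with $q\ge t-n=d+1$, hence lies in $Ra^{2d+2}=\cJ$ (this is vacuously true if $t$ exceeds the size of $B$). On the other hand, since $k\ge 2d+1$ the matrix $A$ contains the $t\times t$-submatrix $\diag(I_m, aI_{2d+1})$, whose determinant is $a^{2d+1}$; here $t=m+(2d+1)$ does not exceed the size $m+k+j$ of $A$, so this submatrix genuinely exists. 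Lemma~\ref{L-minor} now forces $a^{2d+1}\in\cJ=Ra^{2d+2}$, contradicting the hypothesis $a^{2|n-m|+1}\notin Ra^{2|n-m|+2}$. Hence $k\le 2d=2(n-m)$.

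It remains to rule out $n<m$, in which case the asserted bound $k\le 2(n-m)<0$ is incompatible with $k\ge 0$, so I must show the hypotheses are then contradictory. First, the hypothesis implies $a$ is not invertible, for otherwise $Ra^{2|n-m|+2}=R$ would contain $a^{2|n-m|+1}$. Now take $t=m$ and $\cJ=Ra$: every $m\times m$-minor of $B$ is $0$ or $a^{2q}$ with $q\ge m-n\ge 1$, hence lies in $Ra$, while the top-left block of $A$ gives the $m\times m$-minor $\det(I_m)=1$. Lemma~\ref{L-minor} would then put $1\in Ra$, forcing $a$ to be invertible, a contradiction; thus $n<m$ cannot occur. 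I expect the only delicate points to be bookkeeping: pinning down the exact minimal power of $a$ in the minors of each block-diagonal matrix, and checking the chosen $t$ against the matrix dimensions (the "too-large $t$'' situation being handled vacuously). The conceptual crux is the calibrated choice $t=n+d+1$, $\cJ=Ra^{2d+2}$, engineered so that the gap between the smallest minor of $B$ and the exhibited minor $a^{2d+1}$ of $A$ is exactly one power of $a$, matching the hypothesis precisely.
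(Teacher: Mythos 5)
Your proof is correct and takes essentially the same route as the paper: the same diagonal matrices $A$ and $B$, the same appeal to Lemma~\ref{L-minor}, and your calibrated choice $t=n+d+1$, $\cJ=Ra^{2d+2}$ coincides exactly with the paper's $K=2n-m+1$ and $Ra^{2(n-m+1)}$. The only (immaterial) difference is that you rule out $n<m$ using $m\times m$-minors where the paper uses $(n+1)\times(n+1)$-minors, both arguments resting on the same observation that $1$ is a minor of $A$ of that size while all such minors of $B$ lie in $Ra$.
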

\begin{proof} Denote by $A$ an $(k+j+m)\times (k+j+m)$-diagonal matrix with $k$ diagonal entries being $1$, $j$ diagonal entries being $a^2$, and  $m$ diagonal entries being $a$. Also
denote by $B$ an $(n+l)\times (n+l)$-diagonal matrix with $n$ diagonal entries being $1$ and $l$ diagonal entries being $a^2$.
Then $A\preceq_\rM B$.

Since $a^{2|n-k|+1}\not\in Ra^{2|n-k|+2}$, we have $1\not\in Ra$.

We claim that $n\ge k$. Assume that $n<k$ instead. Note that every $(n+1)\times (n+1)$-minor of $B$ lies in the ideal $Ra$. By Lemma~\ref{L-minor} every $(n+1)\times (n+1)$-minor of $A$ lies in $Ra$. But clearly $1$ is an $(n+1)\times (n+1)$-minor of $A$, a contradiction. This proves our claim.

Put $K=2n-k+1$. Note that every $K\times K$-minor of $B$ lies in the ideal $Ra^{2(n-k+1)}$. By Lemma~\ref{L-minor} every $K\times K$-minor of $A$ lies in $Ra^{2(n-k+1)}$. Assume that $m> 2n-2k$ instead. Then $m\ge 2n-2k+1$. Clearly $a^{2n-2k+1}$ is a $K\times K$-minor of $A$. This contradicts our assumption that $a^{2|n-k|+1}\not\in Ra^{2|n-k|+2}$. Therefore $m\le 2n-2k$.
\end{proof}

\begin{theorem} \label{T-rk for square}
Let $a\in R$ such that $a^n\not\in Ra^{n+1}$ for all $n\ge 0$, where we put $a^0=1$. Then
$$\{\rk(a): \rk\in \Pb(R), \rk(a^2)=0\}=[0, 1/2].$$
\end{theorem}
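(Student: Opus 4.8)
The plan is to prove the two inclusions separately, the containment in $[0,1/2]$ being elementary and the surjectivity onto $[0,1/2]$ resting on the extension theorems of Section~\ref{S-prelim}. For the inclusion $\{\rk(a):\rk\in\Pb(R),\ \rk(a^2)=0\}\subseteq[0,1/2]$, I would apply Lemma~\ref{L-compare power1} to the $1\times 1$ matrix $a$ with $k=0$, $j=1$, $m=1$, obtaining $\diag(a,a)\preceq_\rM\diag(1,a^2)$, i.e. $2\langle a\rangle_\rM\preceq_\rM\langle 1\rangle_\rM+\langle a^2\rangle_\rM$ in $\rW_\rM(R)$. For any $\rk\in\Pb(R)$, regarded as a state of $(\rW_\rM(R),\preceq_\rM,\langle 1\rangle_\rM)$, monotonicity and additivity give $2\rk(a)\le 1+\rk(a^2)$; when $\rk(a^2)=0$ this forces $\rk(a)\le 1/2$, and since $\rk\ge 0$ always, the inclusion follows. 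Note this half does not use the hypothesis on $a$.

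For the reverse inclusion I would work inside the positively ordered monoid $(\rW_\rM(R),\preceq_\rM,\langle 1\rangle_\rM)$ and build the required states in two stages. Set $\rW_1=\Zb_{\ge 0}\langle 1\rangle_\rM+\Zb_{\ge 0}\langle a^2\rangle_\rM$. In the first stage I would produce a state $\varphi$ of $\rW_1$ with $\varphi(\langle a^2\rangle_\rM)=0$ by starting from the trivial state on $\Zb_{\ge 0}\langle 1\rangle_\rM$ and applying Lemma~\ref{L-APT} with the element $\langle a^2\rangle_\rM$; it suffices to check that the left endpoint $p$ there equals $0$. The inequality $p\ge 0$ is automatic from positivity, and $p\le 0$ amounts to showing that $j\langle 1\rangle_\rM\preceq_\rM i\langle 1\rangle_\rM+m\langle a^2\rangle_\rM$, i.e. $I_j\preceq_\rM\diag(I_i,a^2 I_m)$, forces $j\le i$. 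This is a minor computation: every $(i+1)\times(i+1)$ minor of $\diag(I_i,a^2 I_m)$ lies in $Ra^2$, so by Lemma~\ref{L-minor} the same holds for $I_j$, whereas $1$ is such a minor of $I_j$ once $j\ge i+1$, contradicting $1=a^0\notin Ra\supseteq Ra^2$.

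In the second stage I would apply Theorem~\ref{T-GH1} with the submonoid $\rW_1$, the state $\varphi$, and the element $\langle a\rangle_\rM$, so that the set of values $\psi(\langle a\rangle_\rM)$, as $\psi$ ranges over the states of $\rW_\rM(R)$ extending $\varphi$, equals the interval $[p',q']$ produced by that theorem. Here $p'=0$ by the same minor argument applied to $I_j\preceq_\rM\diag(I_i,a^2 I_r,a I_m)$, where now all $(i+1)\times(i+1)$ minors lie in $Ra$; the bound $q'\le 1/2$ comes from the subequivalence $2\langle a\rangle_\rM\preceq_\rM\langle 1\rangle_\rM+\langle a^2\rangle_\rM$ of Lemma~\ref{L-compare power1} (taking $b=\langle 1\rangle_\rM+\langle a^2\rangle_\rM$, $c=\langle 0\rangle_\rM$, $m=2$); and the reverse bound $q'\ge 1/2$ is exactly Lemma~\ref{L-upper bound}, whose hypothesis $a^{2|j-i|+1}\notin Ra^{2|j-i|+2}$ is guaranteed by the standing assumption $a^N\notin Ra^{N+1}$ for all $N\ge 0$. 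Thus $[p',q']=[0,1/2]$, and for each $t\in[0,1/2]$ the corresponding state $\psi$ yields $\rk=\psi\circ\pi\in\Pb(R)$ with $\rk(a)=t$ and $\rk(a^2)=\varphi(\langle a^2\rangle_\rM)=0$, completing the reverse inclusion.

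The main obstacle is the endpoint $q'=1/2$: one must rule out any further relation that would push $\rk(a)$ below $1/2$ while keeping $\rk(a^2)=0$, and this is precisely where the full strength of the hypothesis $a^N\notin Ra^{N+1}$ is consumed, through the determinant and ideal-membership bookkeeping of Lemmas~\ref{L-minor} and~\ref{L-upper bound}. By contrast, the remaining points—that $\rW_\rM(R)$ is positively ordered (so $\langle 0\rangle_\rM\preceq_\rM\langle A\rangle_\rM$ for all $A$), that Sylvester matrix rank functions correspond to states of $(\rW_\rM(R),\preceq_\rM,\langle 1\rangle_\rM)$, and that the extensions in both stages preserve the prescribed values of $\langle a\rangle_\rM$ and $\langle a^2\rangle_\rM$—are routine.
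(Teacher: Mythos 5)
Your proof is correct, but it takes a genuinely different route from the paper's in the surjectivity half. The paper first produces the endpoint $0$ concretely: since $1\notin Ra$, it pulls back a rank function from the nonzero commutative quotient $R/aR$ (Proposition~\ref{P-rank for commutative}.(1)) to get $\rk'$ with $\rk'(a)=\rk'(a^2)=0$; it then restricts $\rk'$ to the subsemigroup $\rW_1$ generated by $\langle 1\rangle_\rM$ and $\langle a^2\rangle_\rM$, uses Lemma~\ref{L-compare power1} and Lemma~\ref{L-upper bound} to pin the upper extension endpoint $\lambda$ at exactly $1/2$, extends by Theorem~\ref{T-extension submonoid} to hit $\rk(a)=\lambda=1/2$, and finally sweeps out $[0,1/2]$ by convexity of $\Pb(R)$. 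You instead stay entirely inside the Malcolmson semigroup: you manufacture the base state on $\rW_1$ abstractly by a first application of the extension machinery (Lemma~\ref{L-APT}, starting from the trivial state on $\Zb_{\ge 0}\langle 1\rangle_\rM$, with $p=0$ checked via Lemma~\ref{L-minor}), and then realize the whole interval in one application of Theorem~\ref{T-GH1}, since that theorem says \emph{every} value in $[p',q']=[0,1/2]$ is attained by an extension --- so no convexity argument and no quotient-ring detour are needed. The decisive step, $q'\ge 1/2$ via Lemma~\ref{L-upper bound} under the hypothesis $a^N\notin Ra^{N+1}$, is the same in both arguments, as is the elementary inclusion $\subseteq[0,1/2]$. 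What each approach buys: the paper's is shorter given Proposition~\ref{P-rank for commutative} and yields a concrete (indeed regular) rank function at the $0$ endpoint; yours is more self-contained and makes the interval structure explicit, at the cost of the two extra endpoint verifications $p=0$ and $p'=0$ (both routine minor computations, and your appeal to positively ordered monoids is legitimate since $\langle 0\rangle_\rM=\langle 0\cdot B\cdot 0\rangle_\rM\preceq_\rM\langle B\rangle_\rM$ for all $B$). One small slip: in stage two a general $b\in\rW_1$ has the form $j\langle 1\rangle_\rM+s\langle a^2\rangle_\rM$, so the subequivalence you must analyze is $\diag(I_j,a^2I_s)\preceq_\rM\diag(I_i,a^2I_r,aI_m)$ rather than $I_j\preceq_\rM\diag(I_i,a^2I_r,aI_m)$; this changes nothing, since $1$ is still an $(i+1)\times(i+1)$-minor of the left-hand matrix whenever $j\ge i+1$, and every $(i+1)\times(i+1)$-minor of the right-hand matrix lies in $Ra$.
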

\begin{proof} Taking $k=0$ and $j=m=1$ in Lemma~\ref{L-compare power1} we have $\diag(a, a)\preceq_\rM\diag(1, a^2)$, whence $2\rk(a)\le 1+\rk(a^2)$ for every $\rk\in \Pb(R)$. Therefore
$$\{\rk(a): \rk\in \Pb(R), \rk(a^2)=0\}\subseteq [0, 1/2].$$

Since $\Pb(R)$ is a convex set, to show
$$\{\rk(a): \rk\in \Pb(R), \rk(a^2)=0\}\supseteq [0, 1/2],$$ it suffices to show that there are $\rk, \rk'\in \Pb(R)$ such that $\rk(a^2)=\rk'(a^2)=0$, and $\rk(a)=1/2$, $\rk'(a)=0$.

Since $1\not\in Ra$, the commutative quotient ring $R/aR$ is nonzero, thus by Proposition~\ref{P-rank for commutative}.(1) $R/aR$ has Sylvester matrix rank functions. Take any such rank function and pull it back to $R$, we obtain an $\rk'\in \Pb(R)$ such that $\rk'(a^2)=\rk'(a)=0$.

Let $\rW_1$ be the subsemigroup of $\rW_{\rM}(R)$ generated by $\langle 1\rangle_\rM$ and $\langle a^2\rangle_\rM$. Let $\rk_1$ be the restriction of $\rk'$ to $(\rW_1, \preceq_\rM, \langle 1\rangle_\rM)$. Put
$$\lambda:=\inf\{(\rk_1(b)-\rk_1(c))/m \mid b, c\in \rW_1, m\in \Nb, b\succeq_\rM c+m\langle a\rangle_\rM\}.$$
By Lemma~\ref{L-upper bound} we have $\lambda\ge 1/2$. From the first paragraph of the proof we have $\lambda\le 1/2$. Therefore $\lambda=1/2$.
By Theorem~\ref{T-extension submonoid}  there is a state $\rk$ of $(\rW_\rM(R), \preceq_\rM, \langle 1\rangle_\rM)$ extending $\rk_1$
such that $\rk(a)=\lambda=1/2$.
\end{proof}

Theorem~\ref{T-rk for square} will be used later in Example~\ref{E-rk not dim}.

\begin{remark} \label{R-regular}
It was shown in \cite[Example 2.1.13]{LA} that the rank function $\rk\in \Pb(\Zb/4\Zb)$ satisfying $\rk(2+4\Zb)=1/2$ is not regular. We don't know whether
Theorem~\ref{T-rk for square} still holds if we replace $\Pb(R)$ by $\Pb_{\rm reg}(R)$.
\end{remark}

To end this section, we record the following two simple lemmas, which will be used in Section~\ref{S-M vs C}.

\begin{lemma} \label{L-commutative}
Let $a, b\in R$. Then $\langle a\rangle_\rM\preceq_\rM \langle b\rangle_\rM$ if and only if $a\in Rb$.
\end{lemma}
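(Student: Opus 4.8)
The plan is to prove Lemma~\ref{L-commutative} by establishing the two implications separately, both of which are elementary since $1\times 1$ matrices are involved and $R$ is commutative. For the ``if'' direction, suppose $a\in Rb$, so $a=rb$ for some $r\in R$. Writing $a=rb\cdot 1$ exhibits $a$ as $CBD$ with $C=r$ and $D=1$ (both viewed as $1\times 1$ matrices over $R$), so by the very first clause in Definition~\ref{D-Matrix Malcolmson} we have $a\lesssim b$, and hence $\langle a\rangle_\rM\preceq_\rM\langle b\rangle_\rM$. This direction is immediate.

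The ``only if'' direction is where the real content lies, and I would handle it via Lemma~\ref{L-minor}. Suppose $\langle a\rangle_\rM\preceq_\rM\langle b\rangle_\rM$, i.e. $a\preceq_\rM b$. Consider the ideal $\cJ=Rb$ of $R$. Since $b$ itself is the unique $1\times 1$-minor of the $1\times 1$ matrix $b$, every $1\times 1$-minor of $b$ lies in $\cJ=Rb$. By Lemma~\ref{L-minor} (applied with $k=1$), every $1\times 1$-minor of $a$ lies in $Rb$; in particular $a$, being the sole entry and hence the sole $1\times 1$-minor of the matrix $a$, satisfies $a\in Rb$. This completes the proof.

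The only subtlety to check is that Lemma~\ref{L-minor} applies cleanly when $A$ and $B$ are the $1\times 1$ matrices $a$ and $b$; this is immediate since the hypothesis there is exactly $A\preceq_\rM B$ together with a minor-containment condition, both of which we have verified. The main obstacle, such as it is, is simply recognizing that the general machinery of Lemma~\ref{L-minor} specializes to give the statement with no further work; there is no genuine difficulty, which is why the lemma is recorded as ``simple.'' One could alternatively give a fully self-contained argument by unwinding the definition of $\preceq_\rM$ through the chain $a=A_1\lesssim\dots\lesssim A_n=b$ and tracking the scalar entry, but invoking Lemma~\ref{L-minor} is cleaner and avoids reproving the case analysis.
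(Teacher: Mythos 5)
Your proposal is correct. The paper records Lemma~\ref{L-commutative} without proof (it is introduced as ``obvious''), and your argument---the ``if'' direction via the clause $A=CBD$ of Definition~\ref{D-Matrix Malcolmson}, and the ``only if'' direction by applying Lemma~\ref{L-minor} with $k=1$ and $\cJ=Rb$, since the $1\times 1$-minors of a matrix are exactly its entries---is precisely the evident entry-tracking argument the authors intend, cleanly packaged through the already-established minor lemma (which correctly handles the arbitrary-size intermediate matrices in a chain $a=A_1\lesssim\dots\lesssim A_n=b$, where naively ``tracking the scalar entry'' would not directly make sense).
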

\begin{proof} Taking $\cJ=Rb$ and $k=1$ in Lemma~\ref{L-minor} we get the ``only if'' part. If $a\in Rb$, say $a=xb$ for some $x\in R$, then $a\lesssim b$, whence $\langle a\rangle_\rM\preceq_\rM \langle b\rangle_\rM$, yielding the ``if'' part.
\end{proof}

\begin{lemma} \label{L-not nilpotent}
Let $a, b\in R$ such that $a^n\not\in Rb$ for all $n\in \Nb$. Then there is a $\rk\in \Pb_{\rm reg}(R)$ such that $\rk(a)=1$ and $\rk(b)=0$.
\end{lemma}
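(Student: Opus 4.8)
The plan is to produce the desired regular rank function by pulling back the unique rank function of a field, exactly in the spirit of the proof of Proposition~\ref{P-rank for commutative}. Concretely, I would seek a prime ideal $\cP$ of $R$ with $b\in \cP$ and $a\notin \cP$. Granted such a $\cP$, the quotient $R/\cP$ is an integral domain, its fraction field $K$ is a field (hence von Neumann regular), and the composite homomorphism $R\to R/\cP\hookrightarrow K$ pulls back the unique Sylvester matrix rank function $\rk_K$ on $K$ to some $\rk\in \Pb_{\rm reg}(R)$ by Definition~\ref{D-regular rank}. Since $a\notin \cP$, its image in $K$ is a nonzero element, so $\rk(a)=\rk_K(\bar a)=1$; since $b\in \cP$, its image is $0$, so $\rk(b)=0$, as desired.

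The heart of the argument is therefore to construct a prime ideal containing $b$ yet avoiding every power of $a$. I would set $S=\{a^n: n\ge 0\}$, a multiplicative subset of $R$, and work with the ideal $Rb$. First I would verify $S\cap Rb=\emptyset$: the hypothesis $a^n\notin Rb$ handles all $n\ge 1$ directly, while the case $n=0$ (that is, $1\notin Rb$) follows because $1\in Rb$ would make $b$ a unit, whence $a=a(b^{-1}b)\in Rb$, contradicting the $n=1$ instance of the hypothesis. In particular $a$ is not nilpotent, since otherwise $a^n=0\in Rb$ for large $n$.

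Having $S\cap Rb=\emptyset$, I would invoke the standard Zorn's-lemma argument used already in Proposition~\ref{P-rank for commutative}: among all ideals of $R$ containing $Rb$ and disjoint from $S$ (a nonempty family, closed under unions of chains) choose a maximal element $\cP$. The usual verification then shows $\cP$ is prime. Indeed, if $xy\in \cP$ with $x,y\notin \cP$, then $\cP+Rx$ and $\cP+Ry$ properly contain $\cP$, so by maximality each meets $S$, giving $a^i\in \cP+Rx$ and $a^j\in \cP+Ry$ for suitable $i,j$; multiplying yields $a^{i+j}\in \cP+Rxy\subseteq \cP$, contradicting $\cP\cap S=\emptyset$. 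By construction $b\in Rb\subseteq \cP$ and $a\in S$ avoids $\cP$, so $a\notin \cP$, completing the construction.

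This is a routine adaptation of the prime-ideal construction already established in Proposition~\ref{P-rank for commutative}, so I do not anticipate a genuine obstacle. The only point demanding care is confirming that the multiplicative set $\{a^n\}$ really is disjoint from $Rb$ at the exponent $n=0$, which is precisely where the commutativity of $R$ and the $n=1$ case of the hypothesis are used; everything else is standard commutative algebra together with the field rank function recalled in Section~\ref{SS-rank function}.
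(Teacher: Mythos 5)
Your proposal is correct and is essentially the paper's argument: the paper merely quotients by $Rb$ first and then invokes the Zorn's-lemma construction from the proof of Proposition~\ref{P-rank for commutative}.(3) in $R':=R/Rb$ (a prime ideal of $R'$ avoiding the powers of $\pi(a)$, fraction field, pull back), which under the correspondence between ideals of $R'$ and ideals of $R$ containing $Rb$ is exactly your prime $\cP\supseteq Rb$ disjoint from $\{a^n: n\ge 0\}$. Your inlined verification that $1\notin Rb$ and the primality check are both sound, so there is no gap.
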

\begin{proof} Denote by $\pi$ the quotient map $R\rightarrow R':=R/Rb$. The assumption says that $\pi(a)$ is not nilpotent. The proof of Proposition~\ref{P-rank for commutative}.(3) shows that there is some $\rk'\in \Pb_{\rm reg}(R')$ such that $\rk'(\pi(a))=1$.
Then $\rk:=\rk'\circ \pi$ is in $\Pb_{\rm reg}(R)$. Clearly $\rk(a)=1$ and $\rk(b)=0$.
\end{proof}

\section{Malcolmson semigroups and Sylvester rank functions for $C^*$-algebras} \label{S-M vs C}

In this section we compare the matrix Malcolmson semigroup (resp. Sylvester matrix rank functions) with the Cuntz semigroup (resp. dimension functions) of a unital $C^*$-algebra. Throughout this section $\cA$ will be  a unital $C^*$-algebra.

\begin{lemma} \label{L-M to C}
For any $A, B\in M(\cA)$, if $A\preceq_\rM B$, then $A\preceq_\rC B$.
\end{lemma}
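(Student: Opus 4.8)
The plan is to exploit that $\preceq_\rM$ is, by Definition~\ref{D-Matrix Malcolmson}, the transitive closure of the elementary relation $\lesssim$, whereas $\preceq_\rC$ is already a transitive relation on $M(\cA)$ (as noted right after its definition). Hence it suffices to establish the implication for a single elementary step, namely that $A\lesssim B$ implies $A\preceq_\rC B$; the general case $A=A_1\lesssim A_2\lesssim\cdots\lesssim A_n=B$ then follows by chaining these inequalities through transitivity of $\preceq_\rC$. So I would reduce immediately to the two defining cases of $\lesssim$.

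The first case, $A=CBD$ for some $C,D\in M(\cA)$, is essentially immediate from the definition of Cuntz subequivalence: taking the constant sequences $C_n=C$ and $D_n=D$ gives $C_nBD_n=A$ for all $n$, so trivially $C_nBD_n\to A$ and thus $A\preceq_\rC B$.

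The real content is the second case, where $B=\left[\begin{matrix} C & E\\ & D\end{matrix}\right]$ and $A=\diag(C,D)$, and I must show $\diag(C,D)\preceq_\rC B$. The main obstacle is that in general one cannot clear the off-diagonal block $E$ by elementary row or column operations (that would require $E$ to lie in the column space of $C$ or the row space of $D$), and the naive route of passing to the positive elements $B^*B$ and comparing with $\diag(C^*C,D^*D)$ fails because the coupling term survives (indeed simple scalar examples show $\diag(a,b)\preceq_\rC\left[\begin{matrix} a & x\\ x^* & b\end{matrix}\right]$ can be false). The key idea I would use instead is an asymptotic scaling that damps $E$ to zero: conjugating $B$ by the square diagonal matrices $X_\lambda=\diag(I,\lambda I)$ on the left and $Y_\lambda=\diag(I,\lambda^{-1}I)$ on the right yields
$$X_\lambda B Y_\lambda=\left[\begin{matrix} C & \lambda^{-1}E\\ & D\end{matrix}\right],$$
which converges in norm to $\diag(C,D)$ as $\lambda\to\infty$.

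To conclude, I would set $\lambda=n$ and take $C_n=\diag(I,nI)$, $D_n=\diag(I,n^{-1}I)$, obtaining sequences in $M(\cA)$ with $C_nBD_n\to\diag(C,D)=A$, which is precisely the definition of $A\preceq_\rC B$. I would close by remarking that this construction is legitimate because $\cA$ is unital, so the identity and scalar-multiple blocks belong to $M(\cA)$, and by a quick check of the block dimensions (with $C\in M_{p,q}$, $E\in M_{p,s}$, $D\in M_{r,s}$) that the products $X_\lambda B Y_\lambda$ are well-formed and have the same shape as $A$.
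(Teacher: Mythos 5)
Your proposal is correct, and at the decisive step it takes a genuinely different---and more elementary---route than the paper. The framing coincides: both arguments reduce, via transitivity of $\preceq_\rC$, to a single elementary step $A\lesssim B$, dispose of the case $A=CBD$ trivially, and isolate the block-triangular case $B=\left[\begin{smallmatrix} C & E\\ & D \end{smallmatrix}\right]$, $A=\diag(C,D)$. There the paper clears the corner analytically: it left-multiplies $B$ by $\diag(|C|C^*, I)$ and right-multiplies by elementary matrices $\left[\begin{smallmatrix} I & -f_j(|C|)C^*E\\ & I \end{smallmatrix}\right]$, with $f_j$ chosen so that $x-x^3f_j(x)\to 0$ uniformly on $\spec(|C|)$; this converges to $\diag(|C|^3, D)$, and the paper then needs Proposition~\ref{P-Cuntz} to pass back through $|C|^3\sim_\rC |C|\sim_\rC C$. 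Your scaling conjugation $C_n=\diag(I,nI)$, $D_n=\diag(I,n^{-1}I)$ instead gives $C_nBD_n=\left[\begin{smallmatrix} C & n^{-1}E\\ & D \end{smallmatrix}\right]\to \diag(C,D)=A$ in norm directly (your block bookkeeping is right, and the error is exactly $n^{-1}\|E\|$), so you hit the definition of $\preceq_\rC$ on the nose with no polar decomposition, no functional calculus, and no appeal to Proposition~\ref{P-Cuntz}. Your observation that the paper's definition of Cuntz subequivalence places no norm or positivity constraint on the sequences $C_n, D_n$ is exactly what makes this legitimate---and note that the paper's route gains no boundedness advantage either, since the multipliers $f_j(|C|)$ are in general also unbounded as $j\to\infty$. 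What the paper's computation produces along the way is a comparison against the positive representative $\diag(|C|^3,D)$, in the spirit of how Cuntz comparison is usually manipulated for positive elements; but for the lemma as stated, your argument is shorter and strictly simpler.
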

\begin{proof} It suffices to show that if $A\lesssim B$, then $A\preceq_\rC B$. In turn it suffices to show that for any
$A\in M_{n, m}(\cA), B\in M_{k, l}(\cA)$ and $C\in M_{n, l}(\cA)$, one has $\left[\begin{matrix} A &  \\ & B \end{matrix}\right]\preceq_\rC \left[\begin{matrix} A & C \\ & B \end{matrix}\right]$.

 For any continuous $\Cb$-valued function $f$ on the spectrum $\spec(|A|)$ of $|A|$, we have
\begin{align*}
\left[\begin{matrix} |A|A^* & \\ & I_k \end{matrix}\right]\left[\begin{matrix} A & C\\ & B \end{matrix}\right]\left[\begin{matrix} I_m & -f(|A|)A^*C \\ & I_l \end{matrix}\right]=\left[\begin{matrix} |A|^3 & (|A|-|A|^3f(|A|))A^*C\\ & B \end{matrix}\right].
\end{align*}
Take a sequence $\{f_j\}_{j\in \Nb}$ of continuous $\Rb$-valued functions on $\spec(|A|)$ such that $\max_{x\in \spec(|A|)}|x-x^3f_j(x)|\to 0$ as $j\to \infty$. Then
$$\left[\begin{matrix} |A|^3 & (|A|-|A|^3f_j(|A|))A^*C\\ & B \end{matrix}\right]\to \left[\begin{matrix} |A|^3 & \\ & B \end{matrix}\right]$$
in norm as $j\to \infty$.
Thus
$$\left[\begin{matrix} A & C \\ & B \end{matrix}\right]\succeq_\rC \left[\begin{matrix} |A|^3 & \\ & B \end{matrix}\right]\sim_\rC \left[\begin{matrix} |A| & \\ & B \end{matrix}\right]\sim_\rC \left[\begin{matrix} A & \\ & B \end{matrix}\right],$$
where for the two equivalences we apply Proposition~\ref{P-Cuntz}.
\end{proof}

From Lemma~\ref{L-M to C} we get the following.

\begin{proposition} \label{P-M to C}
We have an order-preserving homomorphism of monoids $\rW_\rM(\cA)\rightarrow \rW_\rC(\cA)$ sending $\langle A\rangle_\rM$ to $ \langle A\rangle_\rC$ for $A\in M(\cA)$. In particular, every dimension function for $\cA$ is a Sylvester matrix rank function.
\end{proposition}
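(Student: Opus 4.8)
The plan is to build the claimed map directly from the two quotient maps and Lemma~\ref{L-M to C}, and then deduce the statement about dimension functions by composing with states. Write $\pi_\rM\colon M(\cA)\to \rW_\rM(\cA)$ and $\pi_\rC\colon M(\cA)\to \rW_\rC(\cA)$ for the quotient maps. First I would check that $\langle A\rangle_\rM\mapsto \langle A\rangle_\rC$ is well defined: if $A\sim_\rM B$, then $A\preceq_\rM B$ and $B\preceq_\rM A$, so Lemma~\ref{L-M to C} gives $A\preceq_\rC B$ and $B\preceq_\rC A$, whence $A\sim_\rC B$ and $\langle A\rangle_\rC=\langle B\rangle_\rC$. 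Denote the resulting map by $h$.

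Next I would verify that $h$ is an order-preserving monoid homomorphism. Since the additions on both semigroups are given by block-diagonal juxtaposition, one has $h(\langle A\rangle_\rM+\langle B\rangle_\rM)=h(\langle \diag(A,B)\rangle_\rM)=\langle \diag(A,B)\rangle_\rC=\langle A\rangle_\rC+\langle B\rangle_\rC$, and $h$ sends the identity $\langle 0\rangle_\rM$ to $\langle 0\rangle_\rC$. Order-preservation is immediate from Lemma~\ref{L-M to C}: if $\langle A\rangle_\rM\preceq_\rM \langle B\rangle_\rM$, that is $A\preceq_\rM B$, then $A\preceq_\rC B$, that is $\langle A\rangle_\rC\preceq_\rC \langle B\rangle_\rC$. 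Note also that $h(\langle 1\rangle_\rM)=\langle 1\rangle_\rC$, so $h$ carries the order-unit to the order-unit.

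For the ``in particular'' assertion, the key observation is the factorization $\pi_\rC=h\circ \pi_\rM$, which holds because $h(\pi_\rM(A))=\langle A\rangle_\rC=\pi_\rC(A)$ for every $A\in M(\cA)$. Given a dimension function $\varphi$ for $\cA$, write $\varphi=\psi\circ \pi_\rC$ for the corresponding state $\psi$ of $(\rW_\rC(\cA), \preceq_\rC, \langle 1\rangle_\rC)$, as recalled in Subsection~\ref{SS-Cuntz}. Then $\psi\circ h$ is a state of $(\rW_\rM(\cA), \preceq_\rM, \langle 1\rangle_\rM)$: it is a semigroup homomorphism, it is monotone since both $h$ and $\psi$ are, and it sends $\langle 1\rangle_\rM$ to $\psi(\langle 1\rangle_\rC)=1$. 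Therefore $\varphi=\psi\circ \pi_\rC=\psi\circ h\circ \pi_\rM=(\psi\circ h)\circ \pi_\rM$, which by the remark following Definition~\ref{D-Matrix Malcolmson} (applied with $R=\cA$) is precisely a Sylvester matrix rank function.

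There is no serious obstacle here once Lemma~\ref{L-M to C} is in hand; the whole argument is bookkeeping with the two quotient maps and their universal properties. The only point deserving a moment's care is checking that $\psi\circ h$ genuinely satisfies all three state axioms on the Malcolmson semigroup---homomorphism, normalization at the order-unit, and monotonicity---since it is exactly this verification that licenses the final identification of $\varphi$ as a Sylvester matrix rank function via the factorization $\pi_\rC=h\circ\pi_\rM$.
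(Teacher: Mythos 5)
Your proposal is correct and follows exactly the route the paper intends: the paper derives Proposition~\ref{P-M to C} as an immediate consequence of Lemma~\ref{L-M to C}, and your write-up simply makes explicit the routine verifications (well-definedness of $h$, the monoid and order-preservation properties via $\diag$, and the identification of dimension functions and Sylvester matrix rank functions with states of $(\rW_\rC(\cA),\preceq_\rC,\langle 1\rangle_\rC)$ and $(\rW_\rM(\cA),\preceq_\rM,\langle 1\rangle_\rM)$ respectively) that the paper leaves implicit. Nothing is missing and no step deviates from the paper's argument.
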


\begin{remark} \label{R-M to C}
The homomorphism $\rW_\rM(\cA)\rightarrow \rW_\rC(\cA)$ in Proposition~\ref{P-M to C} may fail to be injective, as the following examples indicate.
For a compact Hausdorff space $X$ we denote by $C(X)$ the $C^*$-algebra of all $\Cb$-valued continuous functions on $X$.
\begin{enumerate}
\item Let $\cA=C([-1, 1])$, and let $f\in \cA$ be given by $f(x)=x$ for all $x\in [-1, 1]$. Then $f\not\in |f|\cA$ and $|f|\not\in f \cA$. From Lemma~\ref{L-commutative} we have $f\npreceq_\rM |f|$ and $|f|\npreceq_\rM f$, while $f\sim_\rC |f|$ by Proposition~\ref{P-Cuntz}.
\item Let $\cA=C([0, 1])$, and let $f\in \cA$ be given by $f(x)=xe^{i/x}$ for $0<x\le 1$ and $f(0)=0$. Then $f^*\not\in f\cA$ and $f\not\in f^* \cA$. From Lemma~\ref{L-commutative} we have $f\npreceq_\rM f^*$ and $f^*\npreceq_\rM f$, while $f\sim_\rC f^*$ by Proposition~\ref{P-Cuntz}.
\item Let $\cA=C(X)$ for some  connected compact Hausdorff space $X$ with more than one point. Take a nonnegative nonzero $f\in C(X)$ such that $f$ vanishes at some point of $X$. Then $f\not\in f^2\cA$. From Lemma~\ref{L-commutative} we have $f\npreceq_\rM f^2$, while $f\sim_\rC f^2$ by Proposition~\ref{P-Cuntz}.
\item Let $\cA=C([0, 1])$, and let $f, g\in \cA$ be given by $f(x)=x$ for all $x\in [0, 1]$ and $g(x)=x(2+\sin(1/x))/3$ for all $x\in (0, 1]$ and $g(0)=0$. Then $0\le g\le f$. Since $g\not \in f \cA$, from Lemma~\ref{L-commutative} we have $g\npreceq_\rM f$. Since $\supp(g)=\supp(f)$, we have $g\sim_\rC f$ by Proposition~\ref{P-Cuntz}.
\end{enumerate}
\end{remark}

Next we show that there are Sylvester matrix rank functions which are not dimension functions, even in the case $\cA$ is commutative.

\begin{example} \label{E-rk not dim}
Let $\cA=C([0, 1])$ and let $f\in \cA$ be the identity function given by $f(x)=x$ for all $x\in [0, 1]$. By Proposition~\ref{P-Cuntz}.(2) we have $f\sim_\rC f^2$. Note that $f^n\not\in \cA f^{n+1}$ for all $n\ge 0$. By Theorem~\ref{T-rk for square} there is a $\rk\in \Pb(\cA)$ such that
$\rk(f)=1/2$ and $\rk(f^2)=0$. In particular, $\rk$ is not a dimension function. We don't know whether $\rk$ must be regular or irregular. 
\end{example}

\begin{example} \label{E-rk not dim2}
Let $\cA=C([0, 1])$. Let $f\in \cA$ be the identity function given by $f(x)=x$ for all $x\in [0, 1]$, and let $g\in \cA$ be given by $g(x)=\frac{1}{\log (2/x)}$ for all $x\in (0, 1]$ and $g(0)=0$. Since $\supp(f)=\supp(g)=(0, 1]$, by Proposition~\ref{P-Cuntz}.(4) we have $f\sim_\rC g$. It is easily checked that $g^n\not\in \cA f$ for all $n\in \Nb$. By Lemma~\ref{L-not nilpotent}
there is some $\rk\in \Pb_{\rm reg}(\cA)$ such that $\rk(g)=1$ and $\rk(f)=0$. In particular, $\rk$ is not a dimension function.
\end{example}

The following lemma can be proved in the same way as Lemma~\ref{L-compare power1}.

\begin{lemma} \label{L-compare power}
Let $A\in M(\cA)_{\ge 0}$. Let $0<\delta\le \eta$ and $0<\theta$. Then
$$\diag(A^\eta, A^{\delta+\theta})\preceq_\rM \diag(A^\delta, A^{\eta+\theta}).$$ In particular,
$$\rk(A^{\delta+\theta})-\rk(A^{\eta+\theta})\le \rk(A^\delta)-\rk(A^\eta)$$ for all $\rk\in \Pb(\cA)$.
\end{lemma}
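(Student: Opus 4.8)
The plan is to transcribe verbatim the block-matrix calculation in the proof of Lemma~\ref{L-compare power1}, replacing the integer powers $A^k, A^j, A^{k+m}$ by the fractional powers $A^\delta, A^\eta, A^{\delta+\theta}$ supplied by the continuous functional calculus of the positive element $A$. Under the dictionary $k\leftrightarrow\delta$, $j\leftrightarrow\eta$, $m\leftrightarrow\theta$ the target is literally $\diag(A^\eta, A^{\delta+\theta})\preceq_\rM \diag(A^\delta, A^{\eta+\theta})$, and I would establish it through the same chain
\[
\diag(A^\delta, A^{\eta+\theta})\sim_\rM \left[\begin{matrix} A^\delta & A^\eta \\ & A^{\eta+\theta}\end{matrix}\right]\sim_\rM \left[\begin{matrix} A^\delta & A^\eta \\ -A^{\delta+\theta} & \end{matrix}\right]\sim_\rM \left[\begin{matrix} A^\eta & A^\delta \\ & A^{\delta+\theta}\end{matrix}\right]\succeq_\rM \diag(A^\eta, A^{\delta+\theta}).
\]
Since the case $\eta=\delta$ is trivial by reflexivity, I may assume $\delta<\eta$, so that $A^{\eta-\delta}$ is the (well-defined) continuous functional-calculus element $x\mapsto x^{\eta-\delta}$.

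Each link is justified exactly as in the integer case. The first $\sim_\rM$ is right multiplication by the invertible matrix $\left[\begin{matrix} I & A^{\eta-\delta}\\ & I\end{matrix}\right]$, whose effect on the $(1,2)$ entry is $A^\delta A^{\eta-\delta}=A^\eta$; the second is left multiplication by the invertible $\left[\begin{matrix} I & \\ -A^\theta & I\end{matrix}\right]$, which turns the bottom row $(0,\,A^{\eta+\theta})$ into $(-A^\theta A^\delta,\,-A^\theta A^\eta+A^{\eta+\theta})=(-A^{\delta+\theta},\,0)$; the third is a column swap together with a sign change on a row, i.e.\ multiplication by permutation and $\pm1$ diagonal matrices. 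In every case the matrix $U$ used is invertible, so $UB\lesssim B$ and $B=U^{-1}(UB)\lesssim UB$, giving the asserted $\sim_\rM$. The final $\succeq_\rM$ is precisely the second clause of $\lesssim$ in Definition~\ref{D-Matrix Malcolmson} (passing from a block upper-triangular matrix to its diagonal), with $C=A^\eta$, $D=A^{\delta+\theta}$, $E=A^\delta$.

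The only point that is genuinely different from the integer setting, and hence the one I would flag as the main (if mild) obstacle, is the verification that the algebraic identities used in forming the elementary matrices remain valid: namely $A^a A^b=A^{a+b}$ for $a,b\ge 0$ and the definedness of $A^{\eta-\delta}$ and $A^\theta$. These are immediate for $A\in M(\cA)_{\ge 0}$ because all the powers involved are values of continuous functions on the spectrum of $A$, which lies in $[0,\infty)$, and such functions commute and multiply pointwise; the potentially delicate value at $0$ never arises since every exponent appearing in a power of $A$ is strictly positive. Finally, for the ``in particular'' clause I would apply an arbitrary $\rk\in\Pb(\cA)$, viewed as a state of $(\rW_\rM(\cA), \preceq_\rM, \langle 1\rangle_\rM)$, to the subequivalence just proved; additivity on diagonal blocks and monotonicity give $\rk(A^\eta)+\rk(A^{\delta+\theta})\le \rk(A^\delta)+\rk(A^{\eta+\theta})$, and rearranging yields $\rk(A^{\delta+\theta})-\rk(A^{\eta+\theta})\le \rk(A^\delta)-\rk(A^\eta)$.
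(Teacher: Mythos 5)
Your proposal is correct and is exactly the paper's intended argument: the authors simply state that Lemma~\ref{L-compare power} ``can be proved in the same way as Lemma~\ref{L-compare power1},'' and your transcription of that chain of equivalences, with the elementary/permutation matrix justifications and the functional-calculus identity $A^aA^b=A^{a+b}$ for $A\in M(\cA)_{\ge 0}$ made explicit, fills in precisely what was left implicit. The final deduction for $\rk\in\Pb(\cA)$ via additivity on diagonal blocks and monotonicity is likewise the intended one.
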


Despite the examples constructed in Examples~\ref{E-rk not dim} and \ref{E-rk not dim2}, the next result shows certain properties of dimension functions do hold for all Sylvester matrix rank functions.

\begin{proposition} \label{P-absolute value}
Let $\rk\in \Pb(\cA)$. The following hold:
\begin{enumerate}
\item  For any $A\in M(\cA)_{\ge 0}$, the function $f: (0, +\infty)\rightarrow \Rb$ sending $\theta$ to $\rk(A^\theta)$ is uniformly continuous.
\item  $\rk(A)=\rk(|A|)=\rk(A^*)$ and $\rk(|A|^\theta)=\rk(|A^*|^\theta)$
for all $A\in M(\cA)$ and $\theta>0$.
\item  $\rk(B^\theta) \le \rk(A^\theta )$ for all $0<\theta\le 2$ and $B, A\in M(\cA)$ with $0\le B\le A$.
\end{enumerate}
\end{proposition}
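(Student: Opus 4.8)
The plan is to prove the three statements in the order (1), (3), (2), using part~(1) as the engine that lets us push \emph{norm limits} through $\rk$, even though a Sylvester matrix rank function need not be norm-continuous (cf. Examples~\ref{E-rk not dim}, \ref{E-rk not dim2}). The linchpin of the whole argument is the following regularization fact, which I will isolate as a lemma: if $0\le B\le A$ in $M(\cA)$, then Douglas' factorization lemma produces a contraction $D$ in the bidual with $B^{1/2}=DA^{1/2}=A^{1/2}D^*$, and for \emph{every} $\alpha>0$ one has $DA^\alpha\in M(\cA)$, realized as the norm limit $DA^\alpha=\lim_{\epsilon\to0^+}B^{1/2}(A+\epsilon)^{-1/2}A^\alpha$. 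The point is that $B^{1/2}(A+\epsilon)^{-1/2}A^\alpha=D\,n_\epsilon(A)$ with $n_\epsilon(t)=t^{\alpha+1/2}(t+\epsilon)^{-1/2}\to t^\alpha$ uniformly on $\spec(A)$, so the sequence is Cauchy and its limit lies in the norm-closed $M(\cA)$, even though $D$ itself does not. The special case $B=A$ gives the polar part: $A=U|A|$ with $U|A|^\alpha\in M(\cA)$ for all $\alpha>0$. This lemma is what lets me write down exact $\preceq_\rM$-factorizations that replace the non-existent Malcolmson relations $A\preceq_\rM|A|$ (which genuinely fail, by Remark~\ref{R-M to C}).

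For part~(1), fix $A\in M(\cA)_{\ge0}$, say $A\in M_n(\cA)$, and set $f(\theta)=\rk(A^\theta)$. Since $\rk$ is invariant under nonzero scalar multiples, I may rescale so that $\|A\|\le1$. Then $f$ is bounded ($0\le f\le n$); it is non-increasing because for $\delta\le\eta$ the identity $A^\eta=A^{(\eta-\delta)/2}A^\delta A^{(\eta-\delta)/2}$ is an exact $CBD$-factorization, giving $A^\eta\preceq_\rM A^\delta$; and Lemma~\ref{L-compare power} gives $f(\delta+\theta)-f(\eta+\theta)\le f(\delta)-f(\eta)$ for $\delta\le\eta$, $\theta>0$, i.e. $\theta\mapsto f(\theta)-f(\theta+h)$ is non-increasing for each $h>0$, which is midpoint convexity; a bounded midpoint-convex function is convex, hence continuous on $(0,\infty)$. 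Monotonicity and boundedness give finite limits at $0^+$ and $\infty$, so $f$ extends to a continuous function on $[0,\infty)$ with a finite limit at infinity, and such a function is uniformly continuous.

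For part~(3) I first prove the base case $0\le B\le A\Rightarrow\rk(B)\le\rk(A)$: writing $B=DAD^*=(DA^s)\,A^{1-2s}\,(A^sD^*)$ for $s\in(0,1/2]$, with $DA^s\in M(\cA)$ and $A^sD^*=(DA^s)^*\in M(\cA)$ by the lemma, gives $B\preceq_\rM A^{1-2s}$, so $\rk(B)\le\rk(A^{1-2s})$, and letting $s\to0^+$ and invoking part~(1) yields $\rk(B)\le\rk(A)$. Next, for $0\le b\le a$ I bound $\rk(b^2)$: from $b=Da D^*$ I get $b^2=X\,a^{1-2s}\,(X^*X)\,a^{1-2s}\,X^*$ with $X=Da^s\in M(\cA)$, whence $\rk(b^2)\le\rk\!\big(a^{1-2s}X^*Xa^{1-2s}\big)$; since $X^*X=a^sD^*Da^s\le a^{2s}$ we have $0\le a^{1-2s}X^*Xa^{1-2s}\le a^{2-2s}$, so the base case gives $\rk(b^2)\le\rk(a^{2-2s})\to\rk(a^2)$. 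Finally, for $0<\theta\le2$ set $b=B^{\theta/2}$, $a=A^{\theta/2}$; the Löwner--Heinz inequality (operator monotonicity of $t\mapsto t^{\theta/2}$ for $\theta/2\le1$) gives $0\le b\le a$, and the previous step gives $\rk(B^\theta)=\rk(b^2)\le\rk(a^2)=\rk(A^\theta)$.

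For part~(2), using $A=U|A|$: for $s\in(0,1)$ the factorization $A=(U|A|^{1-s})\,|A|^{s}$ with $U|A|^{1-s}\in M(\cA)$ gives $A\preceq_\rM|A|^{s}$, so $\rk(A)\le\rk(|A|^{s})\to\rk(|A|)$ by part~(1); conversely $|A|^{1+s}=(|A|^{s}U^*)\,A$ gives $\rk(|A|^{1+s})\le\rk(A)$, and letting $s\to0^+$ yields $\rk(|A|)\le\rk(A)$, so $\rk(A)=\rk(|A|)$. The equality $\rk(A)=\rk(A^*)$ follows from $A^*=|A|^{1-s}\,(U|A|^{s})^*$, giving $\rk(A^*)\le\rk(|A|^{1-s})\to\rk(|A|)=\rk(A)$, together with the symmetric inequality applied to $A^*$. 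For the last claim, $|A^*|^\theta=U|A|^\theta U^*=(U|A|^{\epsilon})\,|A|^{\theta-2\epsilon}\,(U|A|^{\epsilon})^*$ for $0<\epsilon<\theta/2$ yields $\rk(|A^*|^\theta)\le\rk(|A|^{\theta-2\epsilon})\to\rk(|A|^\theta)$, and symmetry gives equality. The \textbf{main obstacle} throughout is precisely the discontinuity of $\rk$: every inequality above is an exact $\preceq_\rM$-relation only \emph{after} a regularization, so the whole scheme rests on the lemma that the regularized polar/Douglas factors $U|A|^\alpha$ and $DA^\alpha$ actually land in $M(\cA)$ for $\alpha>0$, and on using the uniform continuity from part~(1) to pass to the limit.
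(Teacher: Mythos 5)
Your proof is correct, and at the decisive points it shares the paper's skeleton: part (1) rests on Lemma~\ref{L-compare power}, part (2) on the polar decomposition $A=U|A|$ with regularized exponents, and part (3) ends with the same L\"owner--Heinz reduction $B^{\theta/2}\le A^{\theta/2}$. The genuine differences are worth noting. In (1) the paper runs a direct $\varepsilon$-argument: it uses monotonicity to get $L=\lim_{\theta\to 0^+}\rk(A^\theta)$ and then applies Lemma~\ref{L-compare power} to shift a small increment near $0$ to an arbitrary location; you instead observe that Lemma~\ref{L-compare power} says exactly that $f$ is midpoint convex and invoke the classical Bernstein--Doetsch theorem (bounded midpoint-convex $\Rightarrow$ convex $\Rightarrow$ continuous), which is slicker but leans on a theorem you should cite explicitly. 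In (3) the paper obtains $B^{1/2}=C_\delta A^\delta$ with $C_\delta\in M(\cA)$ from \cite[Proposition 1.4.5]{Pedersen18} and is then forced to route through part (2) (via $\rk(|C_\delta A^{2\delta}|^2)=\rk(|(C_\delta A^{2\delta})^*|^2)$) before letting $\delta\to 1/2$; you instead prove the $\theta=1$ case first from the Douglas contraction, using $D^*D\le 1$ to get $a^{1-2s}X^*Xa^{1-2s}\le a^{2-2s}$ and the already-established base case, so your proof of (3) never uses (2) --- a clean decoupling the paper does not have, which also lets you prove the parts in the order (1), (3), (2). Your explicit regularization lemma ($DA^\alpha, U|A|^\alpha\in M(\cA)$ for all $\alpha>0$, via uniform convergence of $t^{\alpha+1/2}(t+\epsilon)^{-1/2}$ on $\spec(A)$) makes rigorous what the paper uses silently when it writes $S|A|^\delta\in M(\cA)$, and is a useful addition. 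One cosmetic slip: in (2) you write ``$\rk(|A|^{s})\to\rk(|A|)$'' after letting the factorization $A=(U|A|^{1-s})|A|^s$ do its work; that limit is taken as $s\to 1^-$, not $s\to 0^+$ (as $s\to 0^+$ one gets $\lim_{\theta\to 0^+}f(\theta)$, which is generally strictly larger than $f(1)$) --- the surrounding inequalities show this is what you meant, but the direction should be stated.
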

\begin{proof}
(1). Since $A^\eta\preceq_\rM A^\delta$ for all $0<\delta\le \eta$, $f$ is decreasing. Say, $A\in M_n(\cA)$. Then $\rk(A^\theta)\le n$ for all $\theta>0$. Put $L=\sup_{\theta>0}\rk(A^\theta)\le n$. Then $\rk(A^\theta)\to L$ as $\theta\to 0$.

Let $\varepsilon>0$. Then there is some $\zeta>0$ such that $L-\varepsilon<\rk(A^\delta)\le L$ for all $0<\delta<\zeta$.
For any $\theta>0$ and $0<\delta<\zeta$, taking $\max(\theta+\delta-\zeta, 0)<\kappa<\theta$, by Lemma~\ref{L-compare power} we have
$$0\le \rk(A^\theta)-\rk(A^{\theta+\delta})=\rk(A^{\kappa+ (\theta-\kappa)})-\rk(A^{\kappa+(\theta+\delta-\kappa)})\le \rk(A^{\theta-\kappa})-\rk(A^{\theta+\delta-\kappa})<\varepsilon.$$
Therefore $f$ is uniformly continuous.

(2). Let $A=S|A|$ be the polar decomposition of $A$. For any $0<\delta<1$, we have
$$ |A|^{1+\delta}=(S|A|^\delta)^*S|A|\preceq_\rM A=(S|A|^\delta)|A|^{1-\delta}\preceq_\rM |A|^{1-\delta}.$$
Thus
$$\rk(|A|^{1+\delta})\le \rk(A)\le \rk(|A|^{1-\delta}).$$ Letting $\delta\rightarrow 0$, by part (1) we get
$$\rk(|A|)\le \rk(A)\le \rk(|A|).$$ Thus $\rk(|A|)=\rk(A)$.

For any $0<\delta<\theta$, we have
$$ |A^*|^\theta=S|A|^\theta S^*=(S|A|^{\delta/2})|A|^{\theta-\delta}(|A|^{\delta/2}S^*)\preceq_\rM  |A|^{\theta-\delta}.$$
Thus $\rk(|A^*|^{\theta})\le \rk(|A|^{\theta-\delta})$. Letting $\delta\rightarrow 0$, by part (1) we get
$\rk(|A^*|^\theta)\le \rk(|A|^\theta)$. Replacing $A$ by $A^*$ we have $\rk(|A|^\theta)\le \rk(|A^*|^\theta)$. Therefore
$\rk(|A^*|^\theta)=\rk(|A|^\theta)$.
In particular,
$$\rk(A)=\rk(|A|)=\rk(|A^*|)=\rk(A^*).$$

(3). For any $0<\delta<1/2$, by \cite[Proposition 1.4.5]{Pedersen18} there is some $C_\delta\in M(\cA)$ such that $B^{1/2}=C_\delta A^\delta$. Then
$B=C_\delta A^{2\delta}C_\delta^*$, whence
$$B^2=C_\delta A^{2\delta} C_\delta^*C_\delta A^{2\delta} C_\delta^*\preceq_\rM A^{2\delta}C_\delta^* C_\delta A^{2\delta}=|C_\delta A^{2\delta}|^2.$$
By part (2) we have
$$\rk(B^2)\le \rk(|C_\delta A^{2\delta}|^2)=\rk(|(C_\delta A^{2\delta})^*|^2).$$
Note that
$$|(C_\delta A^{2\delta})^*|^2= C_\delta A^{4\delta}C_\delta^*\preceq_\rM A^{4\delta}.$$
Thus
$$\rk(B^2)\le \rk(|(C_\delta A^{2\delta})^*|^2)  \le  \rk(A^{4\delta}).$$
Letting $\delta\rightarrow 1/2$, by part (1) we get $\rk(B^2)\le \rk(A^2)$. For any $0<\theta\le 2$, we have $B^{\theta/2}\le A^{\theta/2}$ \cite[Proposition 1.3.8]{Pedersen18}. Therefore $\rk(B^\theta)\le \rk(A^\theta)$.
\end{proof}

For each unital ring $R$ and each $n\in \Nb$ denote by $\Pb_n(R)$ the set of $\rk\in \Pb(R)$ taking values in $\frac{1}{n}\Zb$.
It is a result of Schofield that when $R$ is an algebra over a field, $\bigcup_{n\in \Nb}\Pb_n(R)$ consists of exactly those $\rk$ in $\Pb(R)$ which are induced from unital homomorphisms from $R$ to $M_n(\Db)$ for some $n\in \Nb$ and some division ring $\Db$ and the unique Sylvester matrix rank function for $M_n(\Db)$ \cite[Theorem 7.12]{Schofield}.
In particular, when $R$ is an algebra over a field, one has $\bigcup_{n\in \Nb}\Pb_n(R)\subseteq \Pb_{\rm reg}(R)$.
It is an open question whether $\Pb(R)=\Pb_{\rm reg}(R)$ for algebras $R$ over a field \cite[Question 5.7]{JZ17}.

\begin{corollary} \label{C-not dense}
For $\cA=C([0, 1])$, $\bigcup_{n\in \Nb}\Pb_n(\cA)$ is not dense in $\Pb(\cA)$.
\end{corollary}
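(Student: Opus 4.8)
The plan is to exhibit a closed subset of $\Pb(\cA)$ that contains $\bigcup_{n}\Pb_n(\cA)$ but omits a point of $\Pb(\cA)$. Let $f\in\cA$ be the identity function $f(x)=x$ as in Example~\ref{E-rk not dim}. I claim the separating invariant is the comparison of $\rk(f)$ with $\rk(f^2)$: for every $\rk\in\bigcup_{n}\Pb_n(\cA)$ one has $\rk(f)=\rk(f^2)$, whereas Example~\ref{E-rk not dim} supplies some $\rk_0\in\Pb(\cA)$ with $\rk_0(f)=1/2$ and $\rk_0(f^2)=0$. Since the evaluation maps $\rk\mapsto\rk(f)$ and $\rk\mapsto\rk(f^2)$ are continuous for the topology of pointwise convergence, the set $\cC=\{\rk\in\Pb(\cA):\rk(f)=\rk(f^2)\}$ is closed; it contains $\bigcup_{n}\Pb_n(\cA)$, hence its closure, but it omits $\rk_0$. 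Thus the open set $\{\rk:\rk(f)\ne\rk(f^2)\}$ is a neighbourhood of $\rk_0$ disjoint from $\bigcup_n\Pb_n(\cA)$, so the union is not dense.

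The core of the argument is the claim that $\rk(f)=\rk(f^2)$ for $\rk\in\bigcup_n\Pb_n(\cA)$, and this is where I expect the real work. By Schofield's description recalled above (and $\cA$ being an algebra over $\Cb$), such a $\rk$ is induced by a unital ring homomorphism $\varphi\colon\cA\to M_N(\Db)$ into a matrix ring over a division ring $\Db$, via $\rk(A)=\frac1N\rank_\Db(\varphi(A))$. The key observation is that $f$ carries compatible roots inside $\cA$: for every $k$ the function $f^{1/2^k}\colon x\mapsto x^{1/2^k}$ lies in $\cA$ and satisfies $\bigl(f^{1/2^k}\bigr)^{2^k}=f$. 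Writing $T_k=\varphi(f^{1/2^k})\in M_N(\Db)$, multiplicativity of $\varphi$ gives $\varphi(f)=T_k^{2^k}$ and $\varphi(f^2)=T_k^{2^{k+1}}$.

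I would then invoke the stabilization of kernels of powers of a single matrix. Viewing $\Db^N$ as a right $\Db$-vector space on which $M_N(\Db)$ acts, the column kernels $\ker T_k\subseteq\ker T_k^2\subseteq\cdots$ form an increasing chain of subspaces of a space of dimension $N$, so the chain is constant from step $N$ on, i.e. $\ker T_k^{N}=\ker T_k^{j}$ for all $j\ge N$. Choosing $k$ with $2^k\ge N$ yields $\ker\varphi(f)=\ker T_k^{2^k}=\ker T_k^{2^{k+1}}=\ker\varphi(f^2)$, whence $\rank_\Db\varphi(f)=\rank_\Db\varphi(f^2)$ and therefore $\rk(f)=\rk(f^2)$, proving the claim.

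I expect the main obstacle to be isolating this invariant and recognizing that the abundance of $2^k$-th roots of $f$ in $\cA$, combined with the fact that the nilpotent part of a single $N\times N$ matrix is exhausted by its $N$-th power, forces every matrix-valued rank function to be blind to the distinction between $f$ and $f^2$ — the very distinction that $\rk_0$ exploits. It is worth noting that this argument is tied to the finite matrix size $N$ and does not obviously extend to arbitrary regular rank functions factoring through an abstract von Neumann regular ring; this is consistent with Remark~\ref{R-regular}, where it is left open whether a regular rank function can separate $f$ from $f^2$. Accordingly, the proof does not assert that $\rk_0$ is non-regular, only that $\rk_0$, together with an entire neighbourhood, lies outside $\bigcup_n\Pb_n(\cA)$.
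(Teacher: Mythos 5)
Your proof is correct, but it reaches the key claim by a genuinely different route than the paper. The paper never invokes Schofield's realization theorem in the proof itself: instead it applies Proposition~\ref{P-absolute value}.(1), which asserts (for \emph{every} $\rk'\in\Pb(\cA)$, via the semigroup comparison in Lemma~\ref{L-compare power}) that $\theta\mapsto\rk'(f^\theta)$ is continuous on $(0,+\infty)$; since $\rk'\in\Pb_n(\cA)$ takes values in the discrete set $\frac{1}{n}\Zb$, this function must be constant, and the identities $\rk'(f^{\theta_1})=\rk'(f^{\theta_2})$ pass to the pointwise closure, which therefore excludes the $\rk$ of Example~\ref{E-rk not dim}. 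You instead take the equality $\rk'(f)=\rk'(f^2)$ as the separating (closed) condition and derive it algebraically: Schofield's theorem realizes $\rk'$ through a unital homomorphism $\varphi\colon\cA\to M_N(\Db)$, the roots $f^{1/2^k}\in\cA$ give $\varphi(f)=T_k^{2^k}$ and $\varphi(f^2)=T_k^{2^{k+1}}$, and the stabilization of the chain $\ker T_k\subseteq\ker T_k^2\subseteq\cdots$ in an $N$-dimensional space over a division ring forces equal ranks once $2^k\ge N$; all of these steps are sound (rank--nullity and Fitting-type stabilization work verbatim over division rings, and $\cA$ is a $\Cb$-algebra so the quoted form of Schofield's theorem applies). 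What each approach buys: the paper's argument is self-contained given Proposition~\ref{P-absolute value} and yields the stronger conclusion that $\theta\mapsto\rk''(f^\theta)$ is constant for every $\rk''$ in the closure; your argument trades that internal continuity estimate for one citation of a deep structural theorem (which the paper records immediately before the corollary anyway) plus elementary linear algebra, and it isolates cleanly \emph{why} the obstruction is the finite matrix size $N$ --- an observation you correctly align with the open question in Remark~\ref{R-regular}, since kernel stabilization has no analogue for rank functions factoring through a general von Neumann regular ring. Your topological framing (the set $\{\rk:\rk(f)=\rk(f^2)\}$ is closed for pointwise convergence, contains the union and hence its closure, and misses $\rk_0$) is a correct and slightly more explicit packaging of the paper's closure step.
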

\begin{proof} Let $f\in \cA$ be the identity function given by $f(x)=x$ for all $x\in [0, 1]$. Let $\rk'\in \bigcup_{n\in \Nb}\Pb_n(\cA)$. By Proposition~\ref{P-absolute value}.(1) the function $(0, +\infty)\rightarrow \Rb$ sending $\theta$ to $\rk'(f^\theta)$ is continuous. Since $\rk'$ takes values in $\frac{1}{n}\Zb$ for some $n\in \Nb$, this means that $\rk'(f^\theta)$ does not depend on $\theta\in (0, +\infty)$. Then for any $\rk''$ in the closure of $\bigcup_{n\in \Nb}\Pb_n(\cA)$, one has that  $\rk''(f^\theta)$ does not depend on $\theta\in (0, +\infty)$. Thus the rank function $\rk$ in Example~\ref{E-rk not dim} does not lie in the closure of $\bigcup_{n\in \Nb}\Pb_n(\cA)$.
\end{proof}

From Proposition~\ref{P-absolute value} we know that every $\varphi\in \Pb(\cA)$ is determined by its restriction on $M(\cA)_{\ge 0}$.
It will be interesting to characterise elements of $\Pb(\cA)$ in terms of their restrictions on $M(\cA)_{\ge 0}$.

\begin{question} \label{Q-characterisation of rank function}
Let $\varphi: M(\cA)_{\ge 0}\rightarrow \Rb_{\ge 0}$. Assume that the following conditions hold:
\begin{enumerate}
\item  For any $A\in M(\cA)_{\ge 0}$, the function $f: (0, +\infty)\rightarrow \Rb$ sending $\theta$ to $\varphi(A^\theta)$ is continuous.
\item  $\varphi(|A|)=\varphi(|A^*|)$
for all $A\in M(\cA)$.
\item  $\varphi(B)\le \varphi(A)$ for all $B, A\in M(\cA)$ with $0\le B\le A$.
\item $\varphi(\diag(A, B))=\varphi(A)+\varphi(B)$ for all $A, B\in M(\cA)_{\ge 0}$.
\item $\varphi(0)=0$, $\varphi(1)=1$ and $\varphi(\lambda A)=\varphi(A)$ for all $\lambda\in (0, \infty)$ and $A\in M(\cA)_{\ge 0}$.
\end{enumerate}
Then is  $\rk: M(\cA)\rightarrow \Rb_{\ge 0}$ defined via $\rk(A)=\varphi(|A|)$ in $\Pb(\cA)$?
\end{question}

We say a function $\varphi: M(\cA)\rightarrow \Rb$ (resp. $M(\cA)_{\ge 0}\rightarrow \Rb$) is {\it lower semicontinuous} if it is lower semicontinuous on $M_{n, m}(\cA)$ (resp. $M_n(\cA)_{\ge 0}$) for all $n, m\in \Nb$ (resp. $n\in \Nb$), i.e. for any sequence $\{A_k\}_{k\in \Nb}$ in $M_{n, m}(\cA)$ (resp. $M_n(\cA)_{\ge 0}$) converging to some $A$ in norm, one has $\varphi(A)\le \varliminf_{k\to \infty}\varphi(A_k)$.
Every lower semicontinuous Sylvester matrix rank function $\rk$ for $\cA$ is a dimension function: for any $A, B\in M(\cA)$ with $A\preceq_\rC B$, taking two sequences $\{C_k\}_{k\in \Nb}$ and $\{D_k\}_{k\in \Nb}$ in $M(\cA)$ with $C_kBD_k\to A$ in norm as $k\to \infty$, one has $\rk(C_kBD_k)\le \rk(B)$ for each $k\in \Nb$, whence
$$ \rk(A)\le \varliminf_{k\to \infty}\rk(C_kBD_k)\le \rk(B).$$
From Proposition~\ref{P-M to C} we have the following consequence.

\begin{proposition} \label{P-lower}
The lower semicontinuous dimension functions for $\cA$ are exactly the lower semicontinuous Sylvester matrix rank functions for $\cA$.
\end{proposition}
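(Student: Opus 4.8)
The plan is to verify the two inclusions separately, both of which are essentially immediate given the results already established. For the inclusion that every lower semicontinuous dimension function is a lower semicontinuous Sylvester matrix rank function, I would simply invoke Proposition~\ref{P-M to C}, which asserts that every dimension function for $\cA$ is a Sylvester matrix rank function. Since lower semicontinuity is a property of the underlying function $M(\cA)\rightarrow \Rb_{\ge 0}$, and this function is left unchanged, the lower semicontinuity hypothesis is automatically retained. Thus nothing further needs to be checked in this direction.

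For the reverse inclusion (which is the ``clearly'' assertion stated just before the proposition), let $\rk$ be a lower semicontinuous Sylvester matrix rank function. To see that it is a dimension function I need only supply Cuntz-monotonicity, since $\rk(1)=1$ and additivity under $\diag$ are already axioms (1) and (3) of Definition~\ref{D-Sylvester matrix}. Suppose $A\preceq_\rC B$; by the definition of Cuntz subequivalence there exist sequences $\{C_n\}_{n\in \Nb}$ and $\{D_n\}_{n\in \Nb}$ in $M(\cA)$, of the appropriate sizes, with $C_nBD_n\to A$ in some fixed $M_{p, q}(\cA)$. Axiom (2) of Definition~\ref{D-Sylvester matrix} gives $\rk(C_nBD_n)\le \rk(B)$ for each $n$, and lower semicontinuity of $\rk$ on $M_{p, q}(\cA)$ then yields
$$\rk(A)\le \liminf_{n\to\infty}\rk(C_nBD_n)\le \rk(B).$$
This is exactly the monotonicity with respect to $\preceq_\rC$ required of a dimension function, so $\rk$ is a (lower semicontinuous) dimension function.

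Combining the two inclusions gives the claimed equality of the two classes of functions. There is no real obstacle here: the argument is a direct consequence of Proposition~\ref{P-M to C} together with the factorization characterization of $\preceq_\rC$. The only points requiring mild care are the direction of the inequality in the definition of lower semicontinuity and the bookkeeping of matrix dimensions, so that the products $C_nBD_n$ genuinely converge to $A$ within a single space $M_{p, q}(\cA)$ on which $\rk$ is assumed lower semicontinuous.
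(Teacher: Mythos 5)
Your proof is correct and follows essentially the same route as the paper: one inclusion is exactly Proposition~\ref{P-M to C} (lower semicontinuity being a property of the unchanged underlying function), and the other is the paper's ``clearly'' remark preceding the proposition, which you rightly flesh out via axiom (2) of Definition~\ref{D-Sylvester matrix} together with the estimate $\rk(A)\le \liminf_{n\to\infty}\rk(C_nBD_n)\le \rk(B)$ --- the same argument the paper itself uses inside the proof of Lemma~\ref{L-characterisation of lower rank function}. Your attention to the matrix-size bookkeeping and the sequential form of lower semicontinuity is appropriate and introduces no gap.
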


Question~\ref{Q-characterisation of rank function} has affirmative answer for lower semicontinuous functions, even without assuming the condition (1).

\begin{lemma} \label{L-characterisation of lower rank function}
Let $\varphi: M(\cA)_{\ge 0}\rightarrow \Rb_{\ge 0}$ be lower semicontinuous satisfying the conditions (2)-(5) in Question~\ref{Q-characterisation of rank function}. Then $\rk: M(\cA)\rightarrow \Rb_{\ge 0}$ defined via $\rk(A)=\varphi(|A|)$ is a lower semicontinuous dimension function for $\cA$.
\end{lemma}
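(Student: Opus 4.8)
The plan is to verify that $\rk(A):=\varphi(|A|)$ satisfies the three defining axioms of a dimension function together with lower semicontinuity. Two of the axioms are immediate. Normalization: $\rk(1)=\varphi(|1|)=\varphi(1)=1$ by condition (5). Additivity: since $|\diag(A,B)|=\diag(|A|,|B|)$, condition (4) gives $\rk(\diag(A,B))=\varphi(\diag(|A|,|B|))=\varphi(|A|)+\varphi(|B|)=\rk(A)+\rk(B)$. Lower semicontinuity of $\rk$ will come for free at the end: the map $A\mapsto|A|=(A^*A)^{1/2}$ is norm-continuous on each $M_{n,m}(\cA)$, so if $A_k\to A$ then $|A_k|\to|A|$ in $(M_m(\cA))_{\ge0}$ and lower semicontinuity of $\varphi$ yields $\rk(A)=\varphi(|A|)\le\liminf_k\varphi(|A_k|)=\liminf_k\rk(A_k)$.

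The real content is monotonicity: $A\preceq_\rC B\Rightarrow\rk(A)\le\rk(B)$. First I would reduce to positive elements. By Proposition~\ref{P-Cuntz}.(1) we have $|A|\sim_\rC A\preceq_\rC B\sim_\rC|B|$, so $|A|\preceq_\rC|B|$, and it suffices to prove $\varphi(P)\le\varphi(Q)$ for $P,Q\in M(\cA)_{\ge0}$ with $P\preceq_\rC Q$. The heart of the argument is the intermediate claim that $P\preceq_\rC Q$ implies $\varphi(P^{1/2})\le\varphi(Q^{1/2})$. To prove it, fix $\varepsilon>0$ and, after placing $P$ and $Q$ in a common matrix algebra $M_N(\cA)$ (the padding washing out by additivity and $\varphi(0)=0$), invoke R{\o}rdam's characterisation of Cuntz subequivalence to write $(P-\varepsilon)_+=rQr^*$, where $(P-\varepsilon)_+$ is the functional calculus of $P$ by $t\mapsto\max(t-\varepsilon,0)$ and $r$ is a matrix over $\cA$. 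Setting $Y:=Q^{1/2}r^*$, I compute $|Y|=(Y^*Y)^{1/2}=((P-\varepsilon)_+)^{1/2}$ and $|Y^*|=(YY^*)^{1/2}=(Q^{1/2}r^*rQ^{1/2})^{1/2}$, so condition (2) gives $\varphi(((P-\varepsilon)_+)^{1/2})=\varphi((Q^{1/2}r^*rQ^{1/2})^{1/2})$. Since $Q^{1/2}r^*rQ^{1/2}\le\|r\|^2Q$ and $t\mapsto t^{1/2}$ is operator monotone, one has $(Q^{1/2}r^*rQ^{1/2})^{1/2}\le\|r\|\,Q^{1/2}$, so conditions (3) and (5) give $\varphi(((P-\varepsilon)_+)^{1/2})\le\varphi(Q^{1/2})$. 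Letting $\varepsilon\to0$, norm-continuity of the functional calculus gives $((P-\varepsilon)_+)^{1/2}\to P^{1/2}$, and lower semicontinuity of $\varphi$ yields the claim $\varphi(P^{1/2})\le\varphi(Q^{1/2})$.

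To finish monotonicity I would upgrade the square-root inequality by a rescaling trick. Given positive $P\preceq_\rC Q$, Proposition~\ref{P-Cuntz}.(2) gives $P^2\sim_\rC P\preceq_\rC Q\sim_\rC Q^2$, hence $P^2\preceq_\rC Q^2$; applying the claim to $P^2$ and $Q^2$ gives $\varphi((P^2)^{1/2})\le\varphi((Q^2)^{1/2})$, i.e.\ $\varphi(P)\le\varphi(Q)$. Together with the reduction to positive elements this establishes monotonicity, so $\rk$ is a dimension function, and lower semicontinuity follows as above.

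I expect the intermediate claim to be the main obstacle. The delicate point is that the hypotheses on $\varphi$ say nothing relating $\varphi(X)$ to $\varphi(X^{1/2})$, so the square roots produced by the $|Y|^2=Y^*Y$, $|Y^*|^2=YY^*$ manipulation cannot simply be cancelled; the resolution is to exploit the Cuntz equivalence $X\sim_\rC X^2$ to raise the exponent on both sides at once. Some care is also needed to embed $P$ and $Q$ into a single $M_N(\cA)$ so that R{\o}rdam's lemma is available, and to confirm the norm-continuity of $A\mapsto|A|$ used for the final lower-semicontinuity statement.
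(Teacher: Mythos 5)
Your proposal is correct, but it takes a genuinely different---and noticeably heavier---route than the paper's. You prove monotonicity by reducing to positive elements via Proposition~\ref{P-Cuntz}.(1), invoking R{\o}rdam's lemma to get the exact identity $(P-\varepsilon)_+=rQr^*$, flipping $Y^*Y$ to $YY^*$ with condition (2), and then cancelling the resulting square roots through the rescaling $P\sim_\rC P^2$ from Proposition~\ref{P-Cuntz}.(2). The paper never leaves the definition of $\preceq_\rC$: it first establishes $\rk(AB)\le\min(\rk(A),\rk(B))$---from $|AB|\le\|A\|\cdot|B|$ (operator monotonicity of the square root) together with conditions (3) and (5), plus $\rk(X)=\rk(X^*)$ via condition (2)---and then observes that, $\rk$ being lower semicontinuous, $C_nBD_n\to A$ immediately yields $\rk(A)\le\varliminf_{n\to\infty}\rk(C_nBD_n)\le\rk(B)$. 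So in the paper lower semicontinuity is itself the engine of monotonicity, whereas you use it only inside the R{\o}rdam step to let $\varepsilon\to 0$. The underlying ingredients coincide (condition (2) to pass between $Y^*Y$ and $YY^*$, operator monotone square roots, conditions (3) and (5), lower semicontinuity), but your route additionally imports R{\o}rdam's lemma and, tacitly, the standard equivalence---for positive matrices---between the paper's two-sided definition of $\preceq_\rC$ (via $C_nBD_n\to A$) and the one-sided form $r_nQr_n^*\to P$ that R{\o}rdam's lemma presupposes; both facts are standard, so this is a matter of economy rather than a gap (likewise the unflagged edge case $r=0$, harmless since $\varphi(0)=0$). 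The obstruction you correctly identify---that the hypotheses say nothing relating $\varphi(X)$ to $\varphi(X^{1/2})$, forcing the $X\sim_\rC X^2$ trick---is exactly what the paper's argument sidesteps by never producing square roots of the compared elements at all. What your approach buys is an explicit algebraic witness $rQr^*$ for the comparison at each $\varepsilon$, which generalizes to settings where Cuntz comparison is formulated one-sidedly; what the paper's buys is brevity and self-containment, needing neither Proposition~\ref{P-Cuntz} nor any external comparison theory.
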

\begin{proof}
Let $A\in M_{m, n}(\cA)$ and $B\in M_{n, k}(\cA)$. We have $(AB)^*(AB)\le \|A\|^2B^*B$, whence
$$|AB|=((AB)^*(AB))^{1/2}\le (\|A\|^2B^*B)^{1/2}=\|A\| \cdot |B|$$ by \cite[Proposition 1.3.8]{Pedersen18}. Thus
$$\rk(AB)=\varphi(|AB|)\le \varphi(\|A\|\cdot |B|)=\varphi(|B|)=\rk(B)$$ when $A\neq 0$. Clearly $\rk(AB)=0\le \rk(B)$ when $A=0$. We also have $$\rk(A)=\varphi(|A|)=\varphi(|A^*|)=\rk(A^*),$$ and hence
$$\rk(AB)=\rk(B^*A^*)\le \rk(A^*)=\rk(A).$$

Since $\varphi$ is lower semicontinuous, so is $\rk$.

Let $A, B\in M(\cA)$ with $A\preceq_\rC B$. Then there are sequences $\{C_n\}_{n\in \Nb}$ and $\{D_n\}_{n\in \Nb}$ in $M(\cA)$ such that $C_nBD_n\to A$ in norm as $n\to \infty$. We have
$$\rk(A)\le \varliminf_{n\to \infty}\rk(C_nBD_n)\le \rk(B).$$

For any $A, B\in M(\cA)$, we have
$$\rk(\diag(A, B))=\varphi(\diag(|A|, |B|))=\varphi(|A|)+\varphi(|B|)=\rk(A)+\rk(B).$$  Clearly $\rk(1)=\varphi(1)=1$.
\end{proof}

Blackadar and Handelman note that for every dimension function $\varphi$ for $\cA$ there is a largest lower semicontinuous dimension function bounded above by $\varphi$ \cite[Proposition I.1.5]{BH} \cite[Proposition 6.4.3]{Blackadar88} \cite[Proposition 4.1]{Rordam}. Using their construction, here we extend this result to Sylvester matrix rank functions.

\begin{proposition} \label{P-lower dimension}
Let $\rk\in \Pb(\cA)$. Define $\rk': M(\cA)\rightarrow \Rb_{\ge 0}$ by $\rk'(A)=\sup_{\varepsilon>0}\rk((|A|-\varepsilon)_+)$. Then $\rk'$
is the largest lower semicontinuous dimension function for $\cA$ bounded above by $\rk$. In particular, if $\cA$ has a Sylvester rank function, then it has a lower semicontinuous dimension function.
\end{proposition}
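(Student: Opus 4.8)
The plan is to verify in turn that $\rk'$ is $\Rb_{\ge 0}$-valued with $\rk'\le \rk$, that it satisfies the dimension-function axioms, that it is lower semicontinuous, and that it dominates every lower semicontinuous dimension function bounded above by $\rk$; throughout I would work with the cutoffs $(|A|-\varepsilon)_+$ and the standard comparison lemmas for $\preceq_\rC$. First I would record the monotonicity: for $0<\varepsilon'<\varepsilon$ one has $0\le (|A|-\varepsilon)_+\le (|A|-\varepsilon')_+\le |A|$, so by Proposition~\ref{P-absolute value}(3) (with $\theta=1$) the value $\rk((|A|-\varepsilon)_+)$ increases as $\varepsilon\downarrow 0$. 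Hence the supremum defining $\rk'(A)$ is a monotone limit, and $\rk'(A)\le \rk(|A|)=\rk(A)$ by Proposition~\ref{P-absolute value}(2); this already gives $\rk'\le \rk$ and non-negativity.

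For the dimension-function axioms I would note that $(|1|-\varepsilon)_+=(1-\varepsilon)1$ for $\varepsilon<1$, with $\rk((1-\varepsilon)1)=\rk(1)=1$ since scalars are invertible, so $\rk'(1)=1$; and that $|\diag(A,B)|=\diag(|A|,|B|)$ together with the functional-calculus identity $(\diag(|A|,|B|)-\varepsilon)_+=\diag((|A|-\varepsilon)_+,(|B|-\varepsilon)_+)$ reduces additivity of $\rk'$ to additivity of $\rk$ over block-diagonals, upon taking the monotone limit $\varepsilon\downarrow 0$. The substantial axiom is Cuntz-monotonicity: assuming $A\preceq_\rC B$, Proposition~\ref{P-Cuntz}(1) gives $|A|\preceq_\rC |B|$, and here I would invoke the standard form of Rørdam's comparison lemma (see \cite{Rordam, APT}): for positive $a\preceq_\rC b$ and any $\varepsilon>0$ there are $\delta>0$ and a matrix $X$ with $(a-\varepsilon)_+=X(b-\delta)_+X^*$. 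The decisive feature is the genuine equality with the cutoff $(b-\delta)_+$ on the right, since it is exactly a Malcolmson subequivalence $(|A|-\varepsilon)_+\lesssim (|B|-\delta)_+$, yielding $\rk((|A|-\varepsilon)_+)\le \rk((|B|-\delta)_+)\le \rk'(B)$; letting $\varepsilon\downarrow 0$ gives $\rk'(A)\le \rk'(B)$.

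For lower semicontinuity I would fix $A$ and a sequence $A_k\to A$ in $M_{n,m}(\cA)$ and show $\liminf_k \rk'(A_k)\ge \rk((|A|-\varepsilon)_+)$ for each $\varepsilon>0$, then take the supremum over $\varepsilon$. Continuity of the functional calculus gives $|A_k|\to |A|$, hence $(|A_k|-\varepsilon/2)_+\to (|A|-\varepsilon/2)_+$; once $\|(|A_k|-\varepsilon/2)_+-(|A|-\varepsilon/2)_+\|<\varepsilon/2$, the perturbation version of Rørdam's lemma produces a matrix $d$ with $(|A|-\varepsilon)_+=((|A|-\varepsilon/2)_+-\varepsilon/2)_+=d(|A_k|-\varepsilon/2)_+d^*$, so $\rk((|A|-\varepsilon)_+)\le \rk((|A_k|-\varepsilon/2)_+)\le \rk'(A_k)$ for all large $k$. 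For maximality, let $\varphi\le \rk$ be any lower semicontinuous dimension function; since $A\sim_\rC |A|$ and $(|A|-\varepsilon)_+\to |A|$ as $\varepsilon\downarrow 0$, lower semicontinuity of $\varphi$ gives $\varphi(A)=\varphi(|A|)\le \liminf_{\varepsilon\downarrow 0}\varphi((|A|-\varepsilon)_+)\le \sup_{\varepsilon>0}\rk((|A|-\varepsilon)_+)=\rk'(A)$. The final clause is then immediate: if $\cA$ has a Sylvester rank function, any $\rk\in \Pb(\cA)$ produces the lower semicontinuous dimension function $\rk'$.

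I expect the main obstacle to be pinning down the precise comparison lemma used in the Cuntz-monotonicity step, namely securing the equality $(|A|-\varepsilon)_+=X(|B|-\delta)_+X^*$ with a strictly positive parameter $\delta$ on the right; replacing $(|B|-\delta)_+$ by $|B|$ would only give the useless bound $\rk'(A)\le \rk(B)$. This is obtained by combining the two standard forms of Rørdam's lemma—the gap version $(a-\varepsilon/2)_+\preceq_\rC (b-\delta)_+$ and the perturbation version applied with tolerance $\varepsilon/2$—and the perturbation version is reused in the semicontinuity step. By contrast, the functional-calculus identities for $(\cdot-\varepsilon)_+$ on block matrices and the continuity $|A_k|\to |A|$ are routine.
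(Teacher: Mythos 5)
Your proof is correct, but it takes a genuinely different route from the paper's. You verify the dimension-function axioms for $\rk'$ directly, importing R{\o}rdam's lemma in both its comparison form ($(a-\varepsilon)_+ = x(b-\delta)_+x^*$ whenever $a\preceq_\rC b$ with $a,b\ge 0$) and its perturbation form ($(a-\varepsilon)_+ = dbd^*$ whenever $\|a-b\|<\varepsilon$), and you correctly isolate the decisive point: these lemmas produce exact algebraic identities, hence Malcolmson subequivalences, which is the only kind of comparison a general $\rk\in\Pb(\cA)$ is known to respect, so the $\delta$-cutoff on the right-hand side is indispensable. The paper is instead self-contained: it never invokes R{\o}rdam's lemma, proving in its place the bespoke claim that $B\ge A-\varepsilon/4$ (with $A, B\ge 0$) forces $\rk'(B)\ge \rk((A-\varepsilon)_+)$, via the explicit functional-calculus inequality $f_\varepsilon(A)(B-\varepsilon/4)_+f_\varepsilon(A)\ge \lambda_\varepsilon h_\varepsilon(A)$ combined with Proposition~\ref{P-absolute value}(3); this one claim then does double duty, yielding both lower semicontinuity on $M(\cA)_{\ge 0}$ and monotonicity for $0\le A\le B$ --- essentially an in-house rank-function analogue of your perturbation lemma. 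Moreover, the paper never checks Cuntz monotonicity directly: it verifies conditions (2)--(5) of Question~\ref{Q-characterisation of rank function} (the polar-decomposition step giving $\rk'(|A|)=\rk'(|A^*|)$ plays the role that Proposition~\ref{P-Cuntz}(1) plays for you) and then invokes Lemma~\ref{L-characterisation of lower rank function}, where Cuntz monotonicity falls out of lower semicontinuity plus submultiplicativity via the approximants $C_nBD_n\to A$. Your route buys brevity and stays within standard Cuntz-semigroup technology at the cost of two external black boxes (both available in \cite{Rordam, APT}, which the paper cites anyway); the paper's route buys self-containedness and a reusable perturbation estimate, at the cost of the auxiliary lemma and the condition-(2) verification that your definition-level identity $\rk'(A)=\rk'(|A|)$ renders unnecessary. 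The maximality argument and the final clause are identical in both.
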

\begin{proof} For any $\varepsilon>0$, since $(|A|-\varepsilon)_+\le |A|$, by Proposition~\ref{P-absolute value} one has
$$\rk((|A|-\varepsilon)_+)\le \rk(|A|)=\rk(A).$$ Thus $\rk'(A)\le \rk(A)$.

Let $A, B\in M(\cA)_{\ge 0}$ and $\varepsilon>0$ such that $B\ge A-\varepsilon/4$. We claim that
$$\rk'(B)\ge \rk((A-\varepsilon)_+).$$ Note that
$$(B-\varepsilon/4)_+\ge B-\varepsilon/4\ge A-\varepsilon/2.$$ Define continuous functions $f_{\varepsilon}, g_{\varepsilon}, h_{\varepsilon}: \Rb\rightarrow \Rb$ by $g_{\varepsilon}(x)=x-\varepsilon$ and $h_{\varepsilon}(x)=\max(0, x-\varepsilon)$ for all $x\in \Rb$, $f_{\varepsilon}(x)=0$ for $x\le \varepsilon$,  $f_{\varepsilon}(x)=(x-\varepsilon)^{1/2}$ for $x\in [\varepsilon, \varepsilon+1]$, and $f_{\varepsilon}(x)=1$ for $x\ge \varepsilon+1$. Then there is some constant $\lambda_{\varepsilon}>0$ such that $f^2_{\varepsilon}g_{\varepsilon/2}\ge \lambda_{\varepsilon} h_{\varepsilon}$. Thus
$$f_{\varepsilon}(A)(B-\varepsilon/4)_+f_{\varepsilon}(A)\ge f_{\varepsilon}(A)(A-\varepsilon/2)f_{\varepsilon}(A)=(f_{\varepsilon}g_{\varepsilon/2}f_{\varepsilon})(A)\ge \lambda_{\varepsilon} h_{\varepsilon}(A).$$
By Proposition~\ref{P-absolute value} we have
$$ \rk((B-\varepsilon/4)_+)\ge \rk(f_{\varepsilon}(A)(B-\varepsilon/4)_+f_{\varepsilon}(A))\ge \rk(\lambda_{\varepsilon} h_{\varepsilon}(A))=\rk(h_{\varepsilon}(A))=\rk((A-\varepsilon)_+).$$
Therefore
$$\rk'(B)\ge \rk((B-\varepsilon/4)_+)\ge \rk((A-\varepsilon)_+).$$
 This proves our claim.

Let $A\in M_n(\cA)_{\ge 0}$ and let $\delta>0$. Take $\varepsilon>0$ such that $\rk((A-\varepsilon)_+)\ge \rk'(A)-\delta$. Let $B\in M_n(\cA)_{\ge 0}$ with $\|A-B\|<\varepsilon/4$. Then $B\ge A-\varepsilon/4$. From the above claim we have
$$\rk'(B)\ge \rk((A-\varepsilon)_+)\ge \rk'(A)-\delta.$$ This shows that the restriction of $\rk'$ on $M(\cA)_{\ge 0}$ is lower semicontinuous.

Let $A, B\in M(\cA)$ with $0\le A\le B$. For every $\varepsilon>0$ we have $B\ge A-\varepsilon/4$, whence from the above claim we get $\rk'(B)\ge \rk((A-\varepsilon)_+)$. It follows that $\rk'(B)\ge \rk'(A)$. This verifies the condition (3) of Question~\ref{Q-characterisation of rank function} for $\rk'$.

Let $A\in M(\cA)$. Let $A=S|A|$ be the polar decomposition of $A$. For each $\varepsilon>0$, the element $B:=S(|A|-\varepsilon)_+$ is in $M(\cA)$ with
$|B|=(|A|-\varepsilon)_+$ and $|B^*|=(|A^*|-\varepsilon)_+$, whence by Proposition~\ref{P-absolute value} we have $$\rk((|A|-\varepsilon)_+)=\rk(|B|)=\rk(|B^*|)=\rk((|A^*|-\varepsilon)_+).$$
It follows that $\rk'(|A|)=\rk'(|A^*|)$.  This verifies the condition (2) of Question~\ref{Q-characterisation of rank function} for $\rk'$.

It is easily checked that the condition (4) and (5) of Question~\ref{Q-characterisation of rank function} hold for $\rk'$. From Lemma~\ref{L-characterisation of lower rank function} we conclude that $\rk'$ is a lower semicontinuous dimension function for $\cA$.

Now let $\rk''$ be a lower semicontinuous dimension function for $\cA$ such that $\rk''\le \rk$. Let $A\in M(\cA)$. Then $\rk''((|A|-\varepsilon)_+)\le \rk((|A|-\varepsilon)_+)$ for every $\varepsilon>0$. Since $\rk''$ is lower semicontinuous, we have
$$\rk''(A)=\rk''(|A|)\le \varliminf_{\varepsilon\to 0^+}\rk''((|A|-\varepsilon)_+)\le \varliminf_{\varepsilon\to 0^+}\rk((|A|-\varepsilon)_+)\le \rk'(A).$$
\end{proof}

It is a result of Blackadar and Handelman that there is a natural $1$-$1$ correspondence between lower semicontinuous dimension functions of $\cA$ and normalized $2$-quasitraces of $\cA$ \cite[page 327]{BH}. Haagerup showed that every $2$-quasitrace of a unital exact $C^*$-algebra is a trace \cite{Haagerup}. Thus, when $\cA$ is exact, there is a natural $1$-$1$ correspondence between lower semicontinuous dimension functions of $\cA$ and tracial states of $\cA$. Combining this with Proposition~\ref{P-lower dimension} we have the following consequence.

\begin{corollary} \label{C-trace}
If $\cA$ is exact and has a Sylvester rank function, then $\cA$ has a tracial state.
\end{corollary}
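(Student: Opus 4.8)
The plan is to chain together two results already in hand. Corollary~\ref{C-trace} asserts that if $\cA$ is exact and has a Sylvester rank function, then $\cA$ has a tracial state. The two ingredients are Proposition~\ref{P-lower dimension}, which produces a lower semicontinuous dimension function out of any Sylvester rank function, and the cited results of Blackadar--Handelman and Haagerup, which for exact $\cA$ identify lower semicontinuous dimension functions with tracial states. So the proof is essentially a two-line concatenation, and the only real content is assembling the citations correctly.

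First I would invoke the hypothesis: $\cA$ is exact and has a Sylvester rank function, i.e. $\Pb(\cA)\neq\emptyset$. Pick any $\rk\in\Pb(\cA)$. Next I would apply Proposition~\ref{P-lower dimension} to $\rk$, which guarantees that $\cA$ has a lower semicontinuous dimension function $\rk'$ (the largest one bounded above by $\rk$). The existence is all that matters here; the maximality and the explicit formula $\rk'(A)=\sup_{\varepsilon>0}\rk((|A|-\varepsilon)_+)$ are not needed.

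Finally I would feed this lower semicontinuous dimension function into the Blackadar--Handelman correspondence \cite[page 327]{BH}, which sets up a bijection between lower semicontinuous dimension functions of $\cA$ and normalized $2$-quasitraces of $\cA$, to obtain a normalized $2$-quasitrace. Since $\cA$ is exact, Haagerup's theorem \cite{Haagerup} says every $2$-quasitrace on $\cA$ is in fact a trace, so this $2$-quasitrace is a tracial state, which is exactly the desired conclusion.

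I do not anticipate a genuine obstacle, since every nontrivial step is a black-box application of a result already quoted in the excerpt; the \emph{main point to be careful about} is that Haagerup's result requires exactness, which is precisely why exactness appears in the hypothesis and cannot be dropped — without it one only recovers a $2$-quasitrace, not necessarily a trace. Thus the proof reads: given a Sylvester rank function, apply Proposition~\ref{P-lower dimension} to get a lower semicontinuous dimension function, pass through the Blackadar--Handelman correspondence to a $2$-quasitrace, and invoke Haagerup's theorem to upgrade it to a tracial state.
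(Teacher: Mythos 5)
Your proposal is correct and follows exactly the paper's intended argument: apply Proposition~\ref{P-lower dimension} to obtain a lower semicontinuous dimension function, pass through the Blackadar--Handelman correspondence to a normalized $2$-quasitrace, and use Haagerup's theorem (which requires exactness) to conclude it is a tracial state. Your remark that exactness is exactly what upgrades the $2$-quasitrace to a trace matches the paper's discussion preceding the corollary.
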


\section{Finitely presented module Malcolmson semigroup} \label{S-fp Module M}

In this section we define the finitely presented module Malcolmson semigroup, give a natural isomorphism between the Grothendieck groups of the matrix Malcolmson semigroup and the finitely presented module Malcolmson semigroup for any unital ring in Theorem~\ref{T-matrix vs fp}, and show that for a von Neumann regular ring these two semigroups are naturally isomorphic in Theorem~\ref{T-regular}. Throughout this section, $R$ will be a unital ring. Denote by $\sFPM(R)$ the class of all finitely presented left $R$-modules. Note that any direct summand of a finitely presented module is finitely presented.

\begin{definition} \label{D-module Malcolmson}
For $\cM, \cN\in \sFPM(R)$, we write $\cM\lesssim \cN$ if we can write $\cN$ as $\cN_1\oplus \cN_2$ such that there is an exact sequence of left $R$-modules
$$\cN_1\rightarrow \cM\rightarrow \cN_2\rightarrow 0.$$
We say that $\cM$ is {\it Malcolmson subequivalent} to $\cN$, written as $\cM\preceq_\rM \cN$, if there are $n\in \Nb$ and  $\cM_1, \dots, \cM_n\in \sFPM(R)$ such that $\cM=\cM_1\lesssim \cM_2\lesssim\dots \lesssim \cM_n=\cN$.
This is a reflexive and transitive relation on $\sFPM(R)$. If $\cM\preceq_\rM \cN$ and $\cN\preceq_\rM \cM$, we say that $\cM$ and $\cN$ are {\it Malcolmson equivalent} and write $\cM\sim_\rM \cN$. Then $\sim_\rM$ is an equivalence relation on $\sFPM(R)$. We define the {\it finitely presented module Malcolmson semigroup} of $R$ as $\rV_\rM(R):=\sFPM(R)/\sim_\rM$.
For each $\cM\in \sFPM(R)$, denote by $\langle \cM\rangle_\rM$ the equivalence class of $\cM$ in $\rV_\rM(R)$.
\end{definition}

Note that if $\cM_1\preceq_\rM \cN_1$ and $\cM_2\preceq_\rM \cN_2$, then $\cM_1\oplus \cM_2\preceq_\rM \cN_1\oplus \cN_2$.  It follows that if $\cM_1\sim_\rM \cN_1$ and $\cM_2\sim_\rM \cN_2$, then
$\cM_1\oplus \cM_2\sim_\rM \cN_1\oplus \cN_2$. For any $\langle \cM\rangle_\rM, \langle \cN\rangle_\rM\in \rV_\rM(R)$, $\langle \cM \rangle_\rM+\langle \cN\rangle_\rM:=\langle \cM\oplus \cN\rangle_\rM$ does not depend on the choice of the representatives $\cM$ and $\cN$. Thus $\rV_\rM(R)$ is an abelian monoid with identity $0=\langle 0\rangle_\rM$ and  partial order given by $\langle \cM\rangle_\rM \preceq_\rM \langle \cN \rangle_\rM$ if $\cM\preceq_\rM \cN$. The element $\langle {}_RR \rangle_\rM$ is an order-unit of $(\rV_\rM(R), \preceq_\rM)$. The Sylvester module rank functions for $R$ can be identified with the states of $(\rV_\rM(R), \preceq_\rM, \langle {}_RR \rangle_\rM)$ naturally.

For each $\cM\in \sFPM(R)$, denote by $[\cM]_\rM$ the image of $\langle \cM\rangle_\rM$ in the Grothendieck group $\cG(\rV_\rM(R))$ of $\rV_\rM(R)$.
Similarly, for each $A\in M(R)$, denote by $[A]_\rM$ the image of $\langle A\rangle_\rM$ in the Grothendieck group $\cG(\rW_\rM(R))$ of $\rW_\rM(R)$.
By Lemma~\ref{L-scaled ordered} we have the partially ordered abelian groups with order-unit $(\cG(\rV_\rM(R)), \cG(\rV_\rM(R))_+, [{}_RR]_\rM)$  and $(\cG(\rW_\rM(R)), \cG(\rW_\rM(R))_+, [1]_{\rM})$. The following result explains the one-to-one correspondence between Sylvester matrix rank functions and Sylvester module rank functions in Theorem~\ref{T-bijection} at the Grothendieck group level.

\begin{theorem} \label{T-matrix vs fp}
There is a natural isomorphism
$$\Phi: (\cG(\rV_\rM(R)), \cG(\rV_\rM(R))_+, [{}_RR]_\rM)\rightarrow (\cG(\rW_\rM(R)), \cG(\rW_\rM(R))_+, [1]_\rM)$$ such that $$\Phi([R^m/R^nA]_\rM)=m[1]_\rM-[A]_\rM$$ and
$$\Phi^{-1}([A]_\rM)=m[{}_RR]_\rM-[R^m/R^nA]_\rM$$ for every $A\in M_{n, m}(R)$.
\end{theorem}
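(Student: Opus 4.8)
The plan is to build $\Phi$ and its inverse simultaneously, each as a group homomorphism induced from a monoid homomorphism into the \emph{other} Grothendieck group, and then to verify they are mutually inverse and carry positive cone to positive cone; by Lemma~\ref{L-scaled ordered} both targets are partially ordered abelian groups with order-unit, so this yields the stated order-isomorphism. For a matrix $X\in M_{r,c}(R)$ write $\cM_X=R^{c}/R^{r}X$ for its cokernel. First I would define $\Phi_0\colon\sFPM(R)\to\cG(\rW_\rM(R))$ by $\Phi_0(\cM)=m[1]_\rM-[A]_\rM$ whenever $R^n\xrightarrow{A}R^m\to\cM\to0$ is a finite presentation. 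Independence of the presentation I would get from the classical fact that two matrices present isomorphic modules iff they are related by finitely many of the elementary moves $A\mapsto PAQ$ ($P,Q$ invertible), $A\mapsto\left[\begin{smallmatrix}A\\0\end{smallmatrix}\right]$ and $A\mapsto\diag(A,1)$; each preserves $m[1]_\rM-[A]_\rM$ because $PAQ\sim_\rM A$, $\left[\begin{smallmatrix}A\\0\end{smallmatrix}\right]\sim_\rM A$ and $[\diag(A,1)]_\rM=[A]_\rM+[1]_\rM$, keeping track of the column count. Additivity $\Phi_0(\cM\oplus\cN)=\Phi_0(\cM)+\Phi_0(\cN)$ follows from block-diagonal presentations, and $\Phi_0({}_RR)=[1]_\rM$.

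The decisive point is that $\Phi_0$ is monotone: $\cM\lesssim\cN$ should force $\Phi_0(\cN)-\Phi_0(\cM)\in\cG(\rW_\rM(R))_+$. With Lemma~\ref{L-positive cone} and the antisymmetry of the group order, monotonicity simultaneously makes $\Phi_0$ descend to $\rV_\rM(R)$ (since $\cM\sim_\rM\cN$ then gives $\Phi_0(\cM)=\Phi_0(\cN)$) and makes the induced $\Phi$ cone-preserving with $\Phi([{}_RR]_\rM)=[1]_\rM$. By additivity it suffices to treat an exact sequence $\cN_1\to\cM\to\cN_2\to0$. I would first handle a surjection $\cM'\twoheadrightarrow\cM$: lifting through a free cover and using that the kernel of a surjection from a finitely generated free module onto a finitely presented module is finitely generated, one produces a presentation matrix $A'$ of $\cM$ with $A^\sharp=ZA'$, where $A^\sharp$ presents $\cM'$; hence $[A^\sharp]_\rM\le[A']_\rM$ and $\Phi_0(\cM)\le\Phi_0(\cM')$. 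For the general move, choosing presentations $A_i$ of $\cN_i$ and lifting yields the extension presentation matrix $A^\sharp=\left[\begin{smallmatrix}A_2&-W\\0&A_1\end{smallmatrix}\right]$ of a module $\cM^\sharp$ that surjects onto $\cM$; the block relation of Definition~\ref{D-Matrix Malcolmson} gives $[A^\sharp]_\rM\ge[A_1]_\rM+[A_2]_\rM$, so $\Phi_0(\cM^\sharp)\le\Phi_0(\cN_1)+\Phi_0(\cN_2)$, and combining with the surjection case gives $\Phi_0(\cM)\le\Phi_0(\cM^\sharp)\le\Phi_0(\cN)$.

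Dually I would define $\Psi_0\colon M(R)\to\cG(\rV_\rM(R))$ by $\Psi_0(A)=m[{}_RR]_\rM-[\cM_A]_\rM$ for $A\in M_{n,m}(R)$; this requires no choice, is additive for $\diag$, and sends $1\mapsto[{}_RR]_\rM$. Here too monotonicity $A\lesssim B\Rightarrow\Psi_0(A)\le\Psi_0(B)$ is the crux. For the block move $A=\diag(C,D)\lesssim\left[\begin{smallmatrix}C&E\\0&D\end{smallmatrix}\right]=B$ the exact sequence $\cM_D\to\cM_B\to\cM_C\to0$ shows $\cM_B\lesssim\cM_C\oplus\cM_D\cong\cM_A$, giving $[\cM_B]_\rM\le[\cM_A]_\rM$ and $\Psi_0(A)\le\Psi_0(B)$. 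For $A=CBD$ I factor the move as a left multiplication after a right multiplication. Left multiplication $A=CX$ gives a surjection $\cM_A\twoheadrightarrow\cM_X$ with equal column count, hence $\Psi_0(A)\le\Psi_0(X)$. The subtle case is right multiplication $A=BD$, where the column count drops from $q$ to $m$; the key device is the exact sequence of finitely presented modules
\[
R^q\ \xrightarrow{\,y\mapsto(\bar y,\,-yD)\,}\ \cM_B\oplus R^m\ \xrightarrow{\,(\bar y,z)\mapsto\overline{yD+z}\,}\ \cM_{BD}\to0,
\]
which by definition yields $\cM_B\oplus R^m\lesssim R^q\oplus\cM_{BD}$, i.e. $[\cM_B]_\rM+m[{}_RR]_\rM\le q[{}_RR]_\rM+[\cM_{BD}]_\rM$, which is exactly $\Psi_0(BD)\le\Psi_0(B)$. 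Thus $\Psi_0$ induces $\Psi\colon\cG(\rW_\rM(R))\to\cG(\rV_\rM(R))$ that is cone-preserving with $\Psi([1]_\rM)=[{}_RR]_\rM$.

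Finally I would check on generators that $\Psi\circ\Phi$ and $\Phi\circ\Psi$ are the identity: on $[R^m/R^nA]_\rM$ one computes $\Psi(m[1]_\rM-[A]_\rM)=m[{}_RR]_\rM-(m[{}_RR]_\rM-[\cM_A]_\rM)=[R^m/R^nA]_\rM$, and dually on $[A]_\rM$, using $\Phi([{}_RR]_\rM)=[1]_\rM$, $\Psi([1]_\rM)=[{}_RR]_\rM$ and $\Phi([R^m/R^nA]_\rM)=m[1]_\rM-[A]_\rM$. Hence $\Phi$ is a group isomorphism with $\Phi^{-1}=\Psi$; since both $\Phi$ and $\Psi$ map positive cone into positive cone, $\Phi(\cG(\rV_\rM(R))_+)=\cG(\rW_\rM(R))_+$, and as $\Phi([{}_RR]_\rM)=[1]_\rM$ it is an isomorphism of partially ordered abelian groups with order-unit, with the displayed formulas holding by construction. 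The main obstacle throughout is the monotonicity step, and specifically the two places where generator/column counts change: the column-dropping right-multiplication case for $\Psi_0$ and the extension case for $\Phi_0$, where one must exhibit the correct short exact sequences of finitely presented modules rather than settle for surjections.
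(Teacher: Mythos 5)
Your proposal is correct, and its overall architecture coincides with the paper's: a map $\phi\colon\sFPM(R)\to\cG(\rW_\rM(R))$, $\cM\mapsto m[1]_\rM-[A]_\rM$, a map $\psi\colon M(R)\to\cG(\rV_\rM(R))$, $A\mapsto m[{}_RR]_\rM-[R^m/R^nA]_\rM$, monotonicity of each, descent to the Grothendieck groups via antisymmetry and Lemma~\ref{L-positive cone}, and the check on generators that the induced maps are mutually inverse. Your matrix-direction monotonicity is in fact identical to the paper's Lemma~\ref{L-matrix to module}, including the same three cases and the same two exact sequences (your right-multiplication sequence is the paper's Case II up to sign, and your block-move sequence is its Case III). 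You deviate genuinely in two local places. First, for well-definedness of $\phi$ the paper invokes Malcolmson's Lemma 2, producing an isomorphism of $R^m\oplus R^l$ carrying $R^nA\oplus R^l$ onto $R^m\oplus R^kB$ and concluding $\diag(A,I_l)\sim_\rM\diag(I_m,B)$ in one stroke; you instead use the classical stabilization (Tietze-move) description of presentation matrices, which is correct and more self-contained, at the cost of needing that generation result (which itself is proved by essentially the same Schanuel-type manipulation the paper cites). Second, for module-direction monotonicity the paper fixes presentations of $\cM$ and $\cN_1$, realizes $\cN$ as $R^{l+m}/R^{k+n+l}\left[\begin{smallmatrix} B &\\ & A \\ & C \end{smallmatrix}\right]$, and finishes with an explicit factorization using a matrix $D$ with $BC=DA$; you avoid producing $D$ altogether by interposing the auxiliary module $\cM^\sharp$ presented by $\left[\begin{smallmatrix}A_2&-W\\ 0&A_1\end{smallmatrix}\right]$, which only surjects onto $\cM$ (correctly accounting for the non-injectivity of $\cN_1\to\cM$), and splitting the estimate into a surjection case (where finite generation of the kernel gives $A^\sharp=ZA'$) plus the block-triangular comparison from Definition~\ref{D-Matrix Malcolmson}. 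Your decomposition is slightly longer but arguably more transparent, since each inequality is attributed to a single structural move; the paper's computation is shorter once $BC=DA$ is observed. Both routes rest on the same two ingredients---lifting through free covers and the triangular-block relation in $\rW_\rM(R)$---so the proofs are equivalent in substance.
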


To prove Theorem~\ref{T-matrix vs fp}, we need to make some preparation. We describe the monoid morphism $\rV_\rM(R)\rightarrow \cG(\rW_\rM(R))$ first.

\begin{lemma} \label{L-module to matrix1}
Let $\cM\in \sFPM(R)$. Write $\cM$ as $R^m/R^nA$ for some $A\in M_{n, m}(R)$. Put
$$\phi(\cM)=m[1]_\rM-[A]_\rM\in \cG(\rW_\rM(R))_+.$$ Then $\phi(\cM)$ does not depend on the presentation of $\cM$ as $R^m/R^nA$.
\end{lemma}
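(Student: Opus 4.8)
The plan is to establish two things: that $m[1]_\rM-[A]_\rM$ lies in $\cG(\rW_\rM(R))_+$, and that it is independent of the presentation. Positivity is immediate: writing $A=A\cdot I_m\cdot I_m$ exhibits $A\lesssim I_m$, hence $A\preceq_\rM I_m$, and since $\langle I_m\rangle_\rM=m\langle 1\rangle_\rM$ we get $m[1]_\rM-[A]_\rM\in\cG(\rW_\rM(R))_+$ by Lemma~\ref{L-positive cone} (take $c=0$). For independence, suppose $\cM\cong R^m/R^nA\cong R^{m'}/R^{n'}A'$. The equality $m[1]_\rM-[A]_\rM=m'[1]_\rM-[A']_\rM$ in $\cG(\rW_\rM(R))$ is equivalent to $[A]_\rM+m'[1]_\rM=[A']_\rM+m[1]_\rM$, and since $m[1]_\rM=[I_m]_\rM$ it suffices to prove the single matrix-level statement $\diag(A,I_{m'})\sim_\rM\diag(A',I_m)$.

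To produce this equivalence I would run a Schanuel-type comparison. Let $p\colon R^m\to\cM$ and $p'\colon R^{m'}\to\cM$ be the quotient maps, so that $R^nA=\ker p$ and $R^{n'}A'=\ker p'$ are the row spaces of $A$ and $A'$. Using that $R^m,R^{m'}$ are free and $p,p'$ surjective, lift to left-linear maps, i.e.\ matrices $F\in M_{m,m'}(R)$ and $G\in M_{m',m}(R)$ acting by right multiplication on row vectors, with $p'F=p$ and $pG=p'$. The key object is the fiber product module $\Gamma=\{(x,y)\in R^m\oplus R^{m'}:p(x)=p'(y)\}\subseteq R^{m+m'}$. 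A direct check shows that $\Gamma$ is simultaneously the row space of
\[
Y_1=\left[\begin{matrix} I_m & F\\ 0 & A'\end{matrix}\right]\qquad\text{and of}\qquad Y_2=\left[\begin{matrix} G & I_{m'}\\ A & 0\end{matrix}\right],
\]
the first coming from the splitting of $\Gamma$ determined by the section $x\mapsto(x,xF)$ together with $\ker p'$, and the second from the section $y\mapsto(yG,y)$ together with $\ker p$. Since $Y_1$ and $Y_2$ have the same row space, each row of one is a left $R$-combination of the rows of the other, so $Y_1=SY_2$ and $Y_2=S'Y_1$ for suitable matrices $S,S'$; by the product clause of $\lesssim$ this gives $Y_1\lesssim Y_2$ and $Y_2\lesssim Y_1$, hence $Y_1\sim_\rM Y_2$.

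It then remains to recognize the diagonal forms inside $Y_1$ and $Y_2$. Right-multiplying $Y_1$ by the invertible matrix $\bigl[\begin{smallmatrix} I_m & -F\\ 0 & I_{m'}\end{smallmatrix}\bigr]$ clears the corner and yields $\diag(I_m,A')\sim_\rM\diag(A',I_m)$, so $Y_1\sim_\rM\diag(A',I_m)$; symmetrically, swapping the two block columns of $Y_2$ and then clearing $G$ by right multiplication with $\bigl[\begin{smallmatrix} I_{m'} & -G\\ 0 & I_m\end{smallmatrix}\bigr]$ gives $\diag(I_{m'},A)\sim_\rM\diag(A,I_{m'})$, so $Y_2\sim_\rM\diag(A,I_{m'})$. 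Chaining the three equivalences, $\diag(A,I_{m'})\sim_\rM Y_2\sim_\rM Y_1\sim_\rM\diag(A',I_m)$, which is exactly what is needed. I expect the main obstacle to be the conceptual step of the middle paragraph: spotting that the fiber product $\Gamma$ carries the two natural generating matrices $Y_1,Y_2$ and that ``same row space'' translates precisely into Malcolmson equivalence. Once this is in place, the remaining manipulations are just invertible column operations and block permutations, each of which preserves $\sim_\rM$.
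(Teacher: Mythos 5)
Your proof is correct, and its skeleton parallels the paper's: both arguments reduce the lemma to a single matrix-level equivalence between the stabilized matrices (your $\diag(A, I_{m'})\sim_\rM \diag(A', I_m)$; the paper's $\diag(A, I_l)\sim_\rM \diag(I_m, B)$, the same statement up to the block swap $\diag(X,Y)\sim_\rM\diag(Y,X)$), and both clinch that equivalence by the same mechanism — two matrices with the same row space are mutual left multiples, hence Malcolmson equivalent by the product clause of $\lesssim$. Where you genuinely diverge is in how the same-row-space situation is produced. The paper cites Lemma~2 of \cite{Malcolmson80}, a Schanuel-type result giving an isomorphism $\psi$ of $R^m\oplus R^l$ (i.e.\ an invertible $C\in M_{m+l}(R)$) with $R^{n+l}\diag(A,I_l)C=R^{m+k}\diag(I_m,B)$, and then compares $\diag(A,I_l)C$ with $\diag(I_m,B)$ directly, the invertible $C$ being absorbed by the product clause. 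You instead prove the Schanuel step inline: the fiber product $\Gamma$ with its two generating matrices $Y_1, Y_2$, arising from the sections $x\mapsto (x,xF)$ and $y\mapsto (yG,y)$, is exactly the standard proof of that cited lemma, and your verifications check out — $(x,y)=(x,xF)+(0,y-xF)$ with $y-xF\in\ker p'$ shows $\Gamma$ is the row space of $Y_1$, symmetrically for $Y_2$, and the cleanup $Y_1\bigl[\begin{smallmatrix} I_m & -F\\ 0 & I_{m'}\end{smallmatrix}\bigr]=\diag(I_m,A')$ together with the block-column permutation and the analogous clearing of $G$ are all invertible right multiplications, so they preserve $\sim_\rM$. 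Your route buys self-containedness (no external citation, and no need to produce an honest automorphism: $Y_1$ and $Y_2$ are compared without any invertibility at that stage); the paper's buys brevity given the reference. Your opening observation that $A=A\,I_m\,I_m$ yields $A\preceq_\rM I_m$, so that $\phi(\cM)\in\cG(\rW_\rM(R))_+$, is also correct and is left implicit in the paper.
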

\begin{proof} Put
$$f(A)=m[1]_\rM-[A]_\rM \in \cG(\rW_\rM(R))_+$$ for each $A\in M_{n, m}(R)$. Then it suffices to show $f(A)=f(B)$ whenever $A\in M_{n, m}(R)$ and $B\in M_{k, l}(R)$ with $R^m/R^nA \cong R^l/R^kB$.
By \cite[Lemma 2]{Malcolmson80} there is an isomorphism $\psi: R^m\oplus R^l\rightarrow R^m\oplus R^l$ such that
$$\psi(R^nA\oplus R^l)=R^m\oplus R^kB.$$
Then there is some invertible $C\in M_{m+l}(R)$ such that $\psi(u)=uC$ for all $u\in R^{m+l}=R^m\oplus R^l$. Note that $R^nA\oplus R^l=R^{n+l}\diag(A, I_l)$ and $R^m\oplus R^kB=R^{m+k}\diag(I_m, B)$. Thus
$R^{n+l}\diag(A, I_l)C=R^{m+k}\diag(I_m, B)$. Then there are some $D\in M_{n+l, m+k}(R)$ and $E\in M_{m+k, n+l}(R)$ such that $\diag(A, I_l)C=D\diag(I_m, B)$ and $E\diag(A, I_l)C=\diag(I_m, B)$. It follows that $\diag(A, I_l)\sim_\rM \diag(I_m, B)$. Therefore
$$[A]_\rM+l[1]_\rM=[\diag(A, I_l)]_\rM=[\diag(I_m, B)]_\rM=m[1]_\rM+[B]_\rM,$$
whence
$$f(A)=m[1]_\rM-[A]_\rM=l[1]_\rM-[B]_\rM=f(B).$$
\end{proof}

\begin{lemma} \label{L-module to matrix2}
Let $\cM, \cN\in \sFPM(R)$ with $\cM\preceq_\rM \cN$. Then $\phi(\cM) \preceq \phi(\cN)$.
\end{lemma}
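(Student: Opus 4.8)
The plan is to exploit that $\preceq_\rM$ on $\sFPM(R)$ is by definition the transitive closure of the elementary relation $\lesssim$. Since $\phi$ is a single-valued function on $\sFPM(R)$ (well-defined by Lemma~\ref{L-module to matrix1}) and the order $\preceq$ on $\cG(\rW_\rM(R))$ is transitive (Lemma~\ref{L-scaled ordered}), it suffices to prove $\phi(\cM)\preceq\phi(\cN)$ for a single step $\cM\lesssim\cN$; the general case then follows by chaining the inequalities along a witnessing chain $\cM=\cM_1\lesssim\cdots\lesssim\cM_n=\cN$. I will also use freely that $\phi$ is additive on direct sums: presenting $\cN_i=R^{m_i}/R^{n_i}A_i$ gives $\cN_1\oplus\cN_2=R^{m_1+m_2}/R^{n_1+n_2}\diag(A_1,A_2)$, whence $\phi(\cN_1\oplus\cN_2)=\phi(\cN_1)+\phi(\cN_2)$.

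So assume $\cN=\cN_1\oplus\cN_2$ with an exact sequence $\cN_1\xrightarrow{f}\cM\xrightarrow{g}\cN_2\to 0$. First I would manufacture a convenient presentation of $\cM$. Fix presentations $R^{n_i}\xrightarrow{\cdot A_i}R^{m_i}\xrightarrow{p_i}\cN_i\to 0$. As $R^{m_2}$ is free hence projective and $g$ is onto, lift $p_2$ to $h\colon R^{m_2}\to\cM$ with $gh=p_2$, and set $q=f\circ p_1\colon R^{m_1}\to\cM$. Then $\Phi(u,w)=q(u)+h(w)$ defines a surjection $R^{m_1}\oplus R^{m_2}\twoheadrightarrow\cM$: given $x$, write $g(x)=p_2(w)$, so $x-h(w)\in\ker g=\im f=q(R^{m_1})$. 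Reading off relations, the rows $(u'A_1,0)$ (from $q(u'A_1)=f(0)=0$) lie in $\ker\Phi$; and fixing, for each basis vector $e_j$ of $R^{n_2}$, some $b_j\in R^{m_1}$ with $h(e_jA_2)=q(b_j)$ (possible since $h(e_jA_2)\in\ker g=q(R^{m_1})$) and letting $B\in M_{n_2,m_1}(R)$ have rows $b_j$, linearity gives $h(vA_2)=q(vB)$, so the rows $(-vB,vA_2)$ also lie in $\ker\Phi$. Thus $\ker\Phi$ contains the row space of the block-lower-triangular matrix $A'=\left[\begin{smallmatrix}A_1 & 0\\ -B & A_2\end{smallmatrix}\right]$.

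It remains to assemble three matrix facts in $\rW_\rM(R)$. Since $\cM$ is finitely presented and $\Phi$ is onto a finitely generated free module, $\ker\Phi$ is finitely generated; choosing generators including the rows of $A'$ gives a genuine presentation $\cM=R^{m_1+m_2}/R^N A''$ with $A''=\left[\begin{smallmatrix}A'\\ C''\end{smallmatrix}\right]$. Deleting the extra rows is of the form $A'=[\,I\ 0\,]A''$, so $A'\lesssim A''$, giving $[A']_\rM\preceq[A'']_\rM$ and hence $\phi(\cM)=(m_1+m_2)[1]_\rM-[A'']_\rM\preceq(m_1+m_2)[1]_\rM-[A']_\rM$. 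Next, permuting block rows and columns of $A'$ (multiplication by invertible permutation matrices, so $\sim_\rM$) turns it into a block-upper-triangular matrix with diagonal $A_2,A_1$, and the block-triangular clause of Definition~\ref{D-Matrix Malcolmson} yields $\diag(A_2,A_1)\preceq_\rM A'$; thus $[A_1]_\rM+[A_2]_\rM\preceq[A']_\rM$ and $(m_1+m_2)[1]_\rM-[A']_\rM\preceq\phi(\cN_1)+\phi(\cN_2)=\phi(\cN)$. Combining the two displayed inequalities gives $\phi(\cM)\preceq\phi(\cN)$.

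I expect the main obstacle to be that $f$ need not be injective, so the ``obvious'' block-triangular matrix $A'$ generally does \emph{not} present $\cM$ exactly but only a module surjecting onto it, the missing relations coming from $\ker f$. The remedy is the first matrix fact above: rather than trying to identify those extra relations, one enlarges $A'$ to a true presentation matrix $A''$ by adjoining rows and notes that adjoining relations only \emph{raises} $[\,\cdot\,]_\rM$, so the inequality still points in the right direction. The remaining care is purely bookkeeping: the linearity of $v\mapsto vB$ (handled by fixing lifts of a basis) and the matching of the row/column block sizes in $A'$ and $A''$.
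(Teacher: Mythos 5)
Your proof is correct, but it constructs the comparison in the opposite direction from the paper's, and the difference is instructive. The paper fixes a presentation $\cM=R^m/R^nA$ of the \emph{smaller} module together with a presentation $\cN_1=R^l/R^kB$, lifts $\cN_1\rightarrow\cM$ to a matrix $C\in M_{l,m}(R)$, and then gets an \emph{exact} presentation of $\cN$ with no unaccounted relations: since $\cN_2$ is the quotient of $\cM$ by the image of $\cN_1$, its relation module over the chosen generators of $\cM$ is precisely $R^nA+R^lC$, so $\cN\cong R^{l+m}/R^{k+n+l}\bigl[\begin{smallmatrix}B&\\&A\\&C\end{smallmatrix}\bigr]$. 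The lemma then reduces to the single matrix inequality $\diag(I_l,A)\succeq_\rM\bigl[\begin{smallmatrix}B&\\&A\\&C\end{smallmatrix}\bigr]$, which the paper proves by an explicit factorization using a matrix $D$ with $BC=DA$ (encoding that the lift carries relations of $\cN_1$ into relations of $\cM$), followed by one use of the triangular clause. You instead present $\cN_1,\cN_2$ and build a presentation of the smaller module $\cM$; as you correctly diagnose, the non-injectivity of $\cN_1\rightarrow\cM$ means the block matrix $A'=\bigl[\begin{smallmatrix}A_1&0\\-B&A_2\end{smallmatrix}\bigr]$ only lists \emph{some} of the relations, so you must invoke the standard fact that a surjection from a finitely generated free module onto a finitely presented module has finitely generated kernel, and then exploit monotonicity of adjoining rows via $A'=[\,I\ 0\,]A''$. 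What your route buys: both matrix comparisons you need ($\diag(A_1,A_2)\preceq_\rM A'$ after block permutations, and $A'\preceq_\rM A''$) are immediate one-clause applications of Definition~\ref{D-Matrix Malcolmson}, with no factorization trick and no auxiliary matrix $D$; the cost is the extra homological input and the bookkeeping with $A''$, both of which the paper's direction avoids because presenting a quotient of an already-presented module only \emph{adds} relations, all of them visible. Your supporting steps all check out: the reduction to a single $\lesssim$ step is legitimate since Definition~\ref{D-module Malcolmson} keeps the witnessing chain inside $\sFPM(R)$ so $\phi$ (well-defined by Lemma~\ref{L-module to matrix1}) applies to every link; the surjectivity of $\Phi$, the construction of $B$ from lifts of the elements $e_jA_2$, the additivity of $\phi$ on direct sums, and the passage to the positive cone of $\cG(\rW_\rM(R))$ are all sound.
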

\begin{proof} We may assume that $\cM\lesssim \cN$. Then we can write $\cN$ as $\cN_1\oplus \cN_2$ such that there is an exact sequence
$$\cN_1\rightarrow \cM\rightarrow \cN_2\rightarrow 0.$$

Write $\cM$ ($\cN_1$ resp.) as $R^m/R^nA$ ($R^l/R^kB$ resp.) for some $A\in M_{n, m}(R)$ ($B\in M_{k, l}(R)$ resp.). Then there is a homomorphism $\psi: R^l\rightarrow R^m$ such that the diagram
\begin{align*}
\xymatrix
{
& R^l \ar[d]  \ar[r]_{\psi} & R^m  \ar[d] \\
& \cN_1 \ar[r] & \cM
}
\end{align*}
commutes. Let $C\in M_{l, m}(R)$ such that $\psi(u)=uC$ for all $u\in R^l$. Then $\cN_2$ is isomorphic to $$R^m/(R^nA+R^lC)=R^m/(R^{n+l}\left[\begin{matrix} A \\ C \end{matrix}\right]),$$ whence $\cN$ is isomorphic to
$$R^l/R^kB\oplus R^m/(R^{n+l}\left[\begin{matrix} A \\ C \end{matrix}\right])\cong R^{l+m}/(R^{k+n+l}\left[\begin{matrix} B &\\ & A \\ & C \end{matrix}\right]).$$ Thus $\phi(\cM)=m[1]_\rM-[A]_\rM$ and
$$\phi(\cN)=(l+m)[1]_\rM-[\left[\begin{matrix} B &\\ & A \\ & C \end{matrix}\right]]_\rM,$$ and hence
$$ \phi(\cN)-\phi(\cM)=l[1]_\rM+[A]_\rM-[\left[\begin{matrix} B &\\ & A \\ & C \end{matrix}\right]]_\rM=[\left[\begin{matrix} I_l &\\ & A  \end{matrix}\right]]_\rM-[\left[\begin{matrix} B &\\ & A \\ & C \end{matrix}\right]]_\rM.$$
Note that $R^kBC=\psi(R^kB)\subseteq R^nA$. Thus there is some $D\in M_{k,n}(R)$ such that $BC=DA$. Then
\begin{align*}
\left[\begin{matrix} I_l &\\ & A  \end{matrix}\right]\gtrsim \left[\begin{matrix} B & -D \\ & I_n  \\I_l &  \end{matrix}\right]\cdot \left[\begin{matrix} I_l &\\ & A  \end{matrix}\right]\cdot \left[\begin{matrix} I_l & C\\ & I_m  \end{matrix}\right]=\left[\begin{matrix} B & \\ & A \\ I_l & C \end{matrix}\right]\succeq_\rM \left[\begin{matrix} B &\\ & A \\ & C \end{matrix}\right].
\end{align*}
Therefore $\phi(\cN)\succeq \phi(\cM)$.
\end{proof}

Next we describe the monoid morphism $\rW_\rM(R)\rightarrow \cG(\rV_\rM(R))$.
For $A\in M_{n, m}(R)$ put
$$\psi(A)=m[{}_RR]_\rM-[R^m/R^nA]_\rM\in \cG(\rV_\rM(R))_+.$$

\begin{lemma} \label{L-matrix to module}
Let $A, B\in M(R)$ with $A\preceq_\rM B$. Then $\psi(A)\preceq \psi(B)$.
\end{lemma}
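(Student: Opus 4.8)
The plan is to reduce to the generating relation $A\lesssim B$ and treat its two defining cases separately, in parallel with the proof of Lemma~\ref{L-module to matrix2}. Since $\preceq_\rM$ is generated by $\lesssim$ and the order on $\cG(\rV_\rM(R))$ is transitive, it suffices to prove $\psi(A)\preceq\psi(B)$ whenever $A\lesssim B$. Recall this means either $A=CBD$ for some $C,D\in M(R)$, or $B=\left[\begin{smallmatrix} C & E\\ & D\end{smallmatrix}\right]$ with $A=\left[\begin{smallmatrix} C & \\ & D\end{smallmatrix}\right]$. In the first case I would further factor $A=C(BD)=(CB)D$ and reduce, by transitivity, to the two pure cases $A=CB$ (left multiplication) and $A=BD$ (right multiplication); composing $\psi(CBD)\preceq\psi(BD)\preceq\psi(B)$ then yields the general product case.

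For left multiplication $A=CB$ with $C\in M_{n,k}(R)$, $B\in M_{k,l}(R)$, the two cokernels share the ambient module $R^l$, and $R^nA=R^nCB\subseteq R^kB$ because $R^nC\subseteq R^k$. Hence there is a natural surjection $R^l/R^nA\to R^l/R^kB$, i.e.\ $\coker A\twoheadrightarrow\coker B$, exhibiting $\coker B\lesssim\coker A$ and giving $[\coker B]_\rM\le[\coker A]_\rM$; as $A$ and $B$ have the same number of columns, this is exactly $\psi(A)\preceq\psi(B)$. For right multiplication $A=BD$ with $B\in M_{k,l}(R)$, $D\in M_{l,m}(R)$, the ambient modules differ ($R^m$ versus $R^l$), so instead I would write down the explicit ``graph'' sequence
\[R^l\xrightarrow{\ \alpha\ }R^m\oplus(R^l/R^kB)\xrightarrow{\ \beta\ }R^m/R^kBD\to 0,\]
with $\alpha(u)=(-uD,\,u+R^kB)$ and $\beta(w,\,u+R^kB)=w+uD+R^kBD$, and verify exactness (well-definedness of $\beta$ uses that $uD\in R^kBD$ whenever $u\in R^kB$, and a short chase identifies $\ker\beta$ with $\operatorname{im}\alpha$). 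This shows $R^m\oplus\coker B\lesssim R^l\oplus\coker A$, which rearranges to $\psi(A)\preceq\psi(B)$.

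For the block case $A$ and $B$ have the same shape, so $\psi(B)-\psi(A)=[\coker A]_\rM-[\coker B]_\rM$ with $\coker A=\cM_C\oplus\cM_D$ where $\cM_C=R^{m_1}/R^{n_1}C$ and $\cM_D=R^{m_2}/R^{n_2}D$, and it suffices to produce $\coker B\lesssim\cM_C\oplus\cM_D$. Here the naive surjection $\coker B\to\cM_D$ (kill the first block $x_1,\dots,x_{m_1}$ of generators) is \emph{not} well defined, since the off-diagonal block $E$ contributes the term $\sum_t E_{it}y_t$ to the relations; this is the main subtlety. The remedy is to project in the other direction: killing the second block $y_1,\dots,y_{m_2}$ sends the relations coming from $C,E$ to precisely the relations of $\cM_C$ (the $E$-term dies) and the $D$-relations to $0$, giving a well-defined surjection $\beta'\colon\coker B\to\cM_C$ whose kernel is the submodule generated by $y_1,\dots,y_{m_2}$. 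Since those generators satisfy the $D$-relations inside $\coker B$, the module $\cM_D$ surjects onto that kernel, yielding an exact sequence $\cM_D\to\coker B\to\cM_C\to 0$ and hence $\coker B\lesssim\cM_C\oplus\cM_D=\coker A$, as required.

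The only points needing real care are the exactness check for the graph sequence in the right-multiplication case and the correct choice of quotient direction in the block case; beyond these the verifications are routine. I would also remark throughout that every module occurring---namely $\coker A$, $\coker B$, $\cM_C$, $\cM_D$, and the free modules---is finitely presented, so that all classes genuinely live in $\rV_\rM(R)$ and no possibly non-finitely-presented sub- or quotient module is ever required.
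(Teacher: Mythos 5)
Your proposal is correct and takes essentially the same route as the paper: after reducing to the generating relation $\lesssim$, your three cases (the quotient of cokernels for $A=CB$, the graph exact sequence $R^l\rightarrow R^m\oplus \coker(B)\rightarrow \coker(A)\rightarrow 0$ for $A=BD$, and the exact sequence $\cM_D\rightarrow \coker(B)\rightarrow \cM_C\rightarrow 0$ in the block case) coincide, up to signs and labelling of maps, with the paper's Cases I--III. Your observation that one must project onto the first block of generators rather than the second is precisely the content of the paper's choice of the surjection $(u,v)+R^{n+k}B\mapsto u+R^nC$, so even the one subtle point matches.
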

\begin{proof} We may assume that $A\lesssim B$.
Then we may further assume that we are in one of the following three cases.

Case I.  $A=CB$ for some $C\in M(R)$. Say, $A\in M_{n, m}(R), B\in M_{k, m}(R)$, and $C\in M_{n, k}(R)$. Then $R^nA=R^nCB\subseteq R^kB$, whence $R^m/R^kB$ is a quotient module of $R^m/R^nA$. Thus $\langle R^m/R^kB\rangle_\rM\preceq_\rM \langle R^m/R^nA\rangle_\rM$. Hence
$$\psi(A)=m[{}_RR]_\rM-[R^m/R^nA]_\rM\preceq m[{}_RR]_\rM-[R^l/R^kB]_\rM=\psi(B).$$

Case II. $A=BD$ for some $D\in M(R)$. Say, $A\in M_{n, m}(R), B\in M_{n, l}(R)$, and $D\in M_{l, m}(R)$. We have
\begin{align*}
\psi(B)-\psi(A)&=l[{}_RR]_\rM-[R^l/R^n B]_\rM-m[{}_RR]_\rM+[R^m/R^nA]_\rM\\
&=[R^l\oplus (R^m/R^nA)]_\rM-[R^m\oplus (R^l/R^nB)]_\rM.
\end{align*}
Denote by $\alpha$ the homomorphism $R^m\oplus (R^l/R^nB)\rightarrow R^m/R^nA$ sending $(u, v+R^nB)$ to $u-vD+R^nA$. Then $\alpha$ is surjective.
Also denote by $\beta$ the homomorphism $R^l\rightarrow R^m\oplus (R^l/R^nB)$ sending $w$ to $(wD, w+R^nB)$. Then $\alpha\beta=0$. Let $(u, v+R^nB)\in \ker \alpha$. Then $u-vD=xA$ for some $x\in R^n$. We have
$$ \beta(xB+v)=(xBD+vD, xB+v+R^nB)=(xA+vD, v+R^nB)=(u, v+R^nB).$$
Thus $\im(\beta)=\ker \alpha$. This means that we have an exact sequence
$$ R^l\overset{\beta}{\rightarrow} R^m\oplus (R^l/R^nB)\overset{\alpha}{\rightarrow} R^m/R^nA\rightarrow 0.$$
Therefore
$$\langle R^m\oplus (R^l/R^nB)\rangle_\rM \preceq_\rM \langle R^l\oplus (R^m/R^nA)\rangle_\rM.$$ Thus $\psi(A)\preceq \psi(B)$.

Case III. $B=\left[\begin{matrix} C & E\\ & D \end{matrix}\right]$ and $A=\left[\begin{matrix} C & \\ & D \end{matrix}\right]$ for some $C, D, E\in M(R)$.
Say, $C\in M_{n, m}(R), E\in M_{n, l}(R)$, and $D\in M_{k, l}(R)$. We have
$$ \psi(B)-\psi(A)=[R^{m+l}/R^{n+k}A]_\rM-[R^{m+l}/R^{n+k}B]_\rM.$$
Note that
$$R^{m+l}/R^{n+k}A\cong (R^m/R^nC)\oplus (R^l/R^kD).$$
Denote by $\alpha$ the homomorphism $R^{m+l}/R^{n+k}B\rightarrow R^m/R^nC$ sending $(u, v)+R^{n+k}B$ to $u+R^nC$. Then $\alpha$ is surjective. Also denote by $\beta$ the homomorphism $R^l/R^kD\rightarrow R^{m+l}/R^{n+k}B$ sending $w+R^kD$ to $(0, w)+R^{n+k}B$. Then $\alpha\beta=0$. Let $(u, v)+R^{n+k}B\in \ker \alpha$. Then $u=xC$ for some $x\in R^n$. We have
$$ \beta(v-xE+R^kD)=(0, v-xE)+R^{n+k}B=(0, v-xE)+(x, 0)B+R^{n+k}B=(u, v)+R^{n+k}B.$$
Thus $\im(\beta)=\ker \alpha$. This means that we have an exact sequence
$$ R^l/R^kD\overset{\beta}{\rightarrow} R^{m+l}/R^{n+k}B \overset{\alpha}{\rightarrow} R^m/R^nC\rightarrow 0.$$
Therefore
$$\langle R^{m+l}/R^{n+k}B\rangle_\rM \preceq_\rM \langle R^{m+l}/R^{n+k}A\rangle_\rM.$$
Thus $\psi(A)\preceq \psi(B)$.
\end{proof}

We are ready to prove Theorem~\ref{T-matrix vs fp}.

\begin{proof}[Proof of Theorem~\ref{T-matrix vs fp}]
From Lemma~\ref{L-module to matrix2} we have a well-defined map $\phi': \rV_\rM(R)\rightarrow \cG(\rW_\rM(R))$ sending $\langle \cM \rangle_\rM$ to $\phi(\cM)$, and for any $\langle \cM \rangle_\rM\preceq_\rM \langle \cN \rangle_\rM$, we have $\phi'(\langle \cM \rangle_\rM)\preceq \phi'(\langle \cN \rangle_\rM)$. Clearly $\phi'$ is additive. It induces a group homomorphism
$$\Phi: \cG(\rV_\rM(R))\rightarrow \cG(\rW_\rM(R))$$ satisfying $\Phi([{}_RR]_\rM)=[1]_\rM$. By Lemma~\ref{L-positive cone} $\Phi$ is positive, i.e. $\Phi(\cG(\rV_\rM(R))_+)\subseteq \cG(\rW_\rM(R))_+$.

From Lemma~\ref{L-matrix to module} we have a well-defined additive map $\psi': \rW_\rM(R)\rightarrow \cG(\rV_\rM(R))$ sending $\langle A \rangle_\rM$ to $\psi(A)$, and for any $\langle A \rangle_\rM\preceq_\rM \langle B\rangle_\rM$, we have $\psi'(\langle A \rangle_\rM)\preceq \psi'(\langle B \rangle_\rM)$. It induces a group homomorphism $\Psi: \cG(\rW_\rM(R))\rightarrow \cG(\rV_\rM(R))$ satisfying $\Psi([1]_\rM)=[{}_RR]_\rM$. By Lemma~\ref{L-positive cone} $\Psi$ is positive, i.e. $\Phi(\cG(\rW_\rM(R))_+)\subseteq \cG(\rV_\rM(R))_+$.

Clearly $\Phi$ and $\Psi$ are inverses to each other. Thus they are isomorphisms between $(\cG(\rV_\rM(R)), \cG(\rV_\rM(R))_+, [{}_RR]_\rM)$ and $(\cG(\rW_\rM(R)), \cG(\rW_\rM(R))_+, [1]_\rM)$.
\end{proof}

\begin{remark} \label{R-matrix vs fp}
Let $S$ be another unital ring and let $f: R\rightarrow S$ be a unital ring homomorphism. Then we have a natural monoid map $\rW_\rM(R)\rightarrow \rW_\rM(S)$ sending $\langle A\rangle_\rM$ to $\langle f(A)\rangle_\rM$. It induces a homomorphism
$$(\cG(\rW_\rM(R)), \cG(\rW_\rM(R))_+, [1]_\rM)\rightarrow (\cG(\rW_\rM(S)), \cG(\rW_\rM(S))_+, [1]_\rM).$$  We also have a monoid map $\rV_\rM(R)\rightarrow \rV_\rM(S)$ sending $\langle \cM\rangle_\rM$ to $\langle S\otimes_R \cM\rangle_\rM$. It induces a homomorphism
$$(\cG(\rV_\rM(R)), \cG(\rV_\rM(R))_+, [{}_RR]_\rM) \rightarrow (\cG(\rV_\rM(S)), \cG(\rV_\rM(S))_+, [{}_SS]_\rM).$$
Clearly
the diagram
\begin{align*}
\xymatrix
{
& (\cG(\rW_\rM(R)), \cG(\rW_\rM(R))_+, [1]_\rM) \ar[d]  \ar[r] & (\cG(\rW_\rM(S)), \cG(\rW_\rM(S))_+, [1]_\rM) \ar[d] \\
&  (\cG(\rV_\rM(R)), \cG(\rV_\rM(R))_+, [{}_RR]_\rM)\ar[r] & (\cG(\rV_\rM(S)), \cG(\rV_\rM(S))_+, [{}_SS]_\rM)
}
\end{align*}
commutes.
\end{remark}

When $R$ is von Neumann regular, the isomorphism in Theorem~\ref{T-matrix vs fp} can be lifted to an isomorphism between  $(\rV_\rM(R), \preceq_\rM, \langle {}_RR\rangle_\rM)$ and $(\rW_\rM(R), \preceq_\rM, \langle 1\rangle_\rM)$.

\begin{theorem} \label{T-regular}
Assume that  $R$ is von Neumann regular. There is a natural isomorphism $\widetilde{\Psi}: (\rW_\rM(R), \preceq_\rM, \langle 1\rangle_\rM) \rightarrow (\rV_\rM(R), \preceq_\rM, \langle {}_RR\rangle_\rM)$ given by $\widetilde{\Psi}(\langle A\rangle_{\rM})=\langle R^n A\rangle_\rM$ for every $A\in M_{n, m}(R)$.
\end{theorem}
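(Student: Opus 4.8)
The plan is to prove that $\widetilde\Psi$ is a well-defined order-isomorphism of monoids by establishing the single equivalence
$$A \preceq_\rM B \iff R^nA \preceq_\rM R^kB \qquad (A\in M_{n,m}(R),\ B\in M_{k,l}(R)),$$
where the left-hand relation is that of Definition~\ref{D-Matrix Malcolmson} and the right-hand one that of Definition~\ref{D-module Malcolmson}, together with surjectivity. I will use three standard properties of a von Neumann regular ring $R$ (all in \cite{Goodearl}): every finitely presented left module is finitely generated projective (so each row module $R^nA$, being a finitely generated submodule of $R^m$, is a direct summand of $R^m$ and hence lies in $\sFPM(R)$, and $\widetilde\Psi$ lands where claimed); every finitely generated submodule of a projective module is a direct summand; and every surjection onto a projective module splits. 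I also record two elementary facts over any ring: two matrices with the same row space inside a common $R^l$ are $\sim_\rM$ (each factors through the other by one left multiplication), and the row module of a block-diagonal matrix is the direct sum of the row modules of its blocks.

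For the forward direction, additivity of $\widetilde\Psi$ follows from the block-diagonal remark and $\widetilde\Psi(\langle 1\rangle_\rM)=\langle {}_RR\rangle_\rM$ is clear, so it remains to show $A\preceq_\rM B\Rightarrow R^nA\preceq_\rM R^kB$; by transitivity I reduce to $A\lesssim B$. If $A=CBD$ I factor through $BD$: the inclusion $R^nA\subseteq R^k(BD)$ exhibits $R^nA$ as a finitely generated submodule of $R^k(BD)$, hence a direct summand, so $R^nA\preceq_\rM R^k(BD)$; and right multiplication by $D$ surjects $R^kB$ onto $R^k(BD)$, giving $R^k(BD)\preceq_\rM R^kB$; compose. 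The delicate case is the block case $B=\left[\begin{smallmatrix} C & E\\ & D\end{smallmatrix}\right]$, $A=\left[\begin{smallmatrix} C & \\ & D\end{smallmatrix}\right]$. Here I set $P=R^{\bullet}B$ and restrict the projection onto the first block to a surjection $\alpha\colon P\to R^\bullet C$; since $R^\bullet C$ is projective, $\alpha$ splits and $P\cong R^\bullet C\oplus\ker\alpha$. Identifying $\ker\alpha$ with its injective image $M\subseteq R^l$ in the second block, one has $R^\bullet D\subseteq M$, so $R^\bullet D$ is a direct summand of $M$, and the composite $\ker\alpha\cong M\twoheadrightarrow R^\bullet D\hookrightarrow R^\bullet A$ has image exactly the kernel of the projection $R^\bullet A=R^\bullet C\oplus R^\bullet D\to R^\bullet C$. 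This yields an exact sequence $\ker\alpha\to R^\bullet A\to R^\bullet C\to 0$ together with the splitting $P=\ker\alpha\oplus R^\bullet C$, which is precisely $R^\bullet A\lesssim R^\bullet B$.

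For surjectivity, any $\cM\in\sFPM(R)$ is finitely generated projective, hence $\cM\cong R^m e$ for an idempotent $e\in M_m(R)$, and $\widetilde\Psi(\langle e\rangle_\rM)=\langle R^m e\rangle_\rM=\langle\cM\rangle_\rM$. For order-reflection I first observe that over a regular ring the module relation $\lesssim$ (and hence $\preceq_\rM$) coincides with ``isomorphic to a direct summand of'': a surjection onto a projective splits and a finitely generated submodule of a projective is a summand, so each instance of $\lesssim$ collapses to the (transitive) summand relation. Thus $R^nA\preceq_\rM R^kB$ means $R^nA$ is isomorphic to a direct summand of $R^kB$. Choose idempotents $p\in M_l(R)$ with $R^l p=R^kB$ and $q\in M_l(R)$ with $R^l q\subseteq R^l p$ and $R^l q\cong R^nA$; then the row-space fact gives $B\sim_\rM p$, while $R^l q\subseteq R^l p$ gives $q=Cp$ for some $C$ and hence $q\lesssim p$. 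Finally $A\sim_\rM q$: realizing $R^nA=R^m q_0$ for an idempotent $q_0\in M_m(R)$ gives $A\sim_\rM q_0$, and $R^m q_0\cong R^l q$ forces $q_0\sim_\rM q$ because Murray--von Neumann equivalent idempotents $e,f$ satisfy $e=UfV$ and $f=VeU$ for suitable rectangular $U,V$, i.e.\ $e\sim_\rM f$. Chaining, $A\sim_\rM q\preceq_\rM p\sim_\rM B$, so $A\preceq_\rM B$.

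I expect the block case of the forward direction to be the main obstacle, as it is the one step that does not reduce to a single submodule-or-quotient operation and instead requires assembling an explicit split exact sequence from the projection onto the first block. A secondary subtlety worth flagging is that one must \emph{not} route the reverse direction through the Grothendieck-group isomorphism $\Psi$ of Theorem~\ref{T-matrix vs fp}: by Lemma~\ref{L-positive cone}, positivity there only yields subequivalence after adding a common summand, and since these monoids need not be cancellative, the stabilized conclusion does not upgrade to $A\preceq_\rM B$. This is exactly why the idempotent-representative argument is carried out directly at the level of matrices and modules.
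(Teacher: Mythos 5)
Your proof is correct, and it reaches the theorem by a noticeably different route than the paper. The paper works matrix-side first: its central computation (Lemma~\ref{L-order for regular}, (1)$\Rightarrow$(2)) collapses $\preceq_\rM$ on matrices to a one-step factorization $A=CBD$, by producing an idempotent $P=UCV$ with $C=XPY$ (Lemma~\ref{L-regular surjective}) and then explicitly multiplying $\left[\begin{smallmatrix} C & E\\ & D \end{smallmatrix}\right]$ on both sides to annihilate the corner block $E$; factorization is then matched to the direct-summand relation through Lemma~\ref{L-order for proj}, and the module side is dispatched by Lemma~\ref{L-order for module regular} ($\preceq_\rM$ on $\sFPM(R)$ equals the quotient relation, hence, by projectivity, the summand relation). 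You never establish the factorization characterization: your forward direction pushes each elementary step $\lesssim$ directly to the module level, with the block case handled homologically by splitting the surjection of the row module of $B$ onto $R^{n_1}C$ and assembling the exact sequence $\ker\alpha\to R^{n_1}C\oplus R^{n_2}D\to R^{n_1}C\to 0$ --- this split-exact-sequence argument replaces the paper's matrix identity --- while your reverse direction (idempotent representatives via row-space equality, $q=Cp$ from containment of row spaces, and Murray--von Neumann equivalence giving $e=UfV$ and $f=VeU$) recovers in substance the content of Lemmas~\ref{L-regular surjective} and \ref{L-order for proj}, with $A=Uq_0$, $q_0=VA$ in place of the paper's explicit construction $P=BAC$, $A=DPE$. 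The paper's organization buys the standalone four-way equivalence of Lemma~\ref{L-order for regular} --- in particular the independently useful fact that over a regular ring $\preceq_\rM$ is always witnessed in a single step $A=CBD$ --- whereas your organization buys a forward direction free of block-matrix computations and makes the summand picture of both semigroups explicit. Your closing caution is also correct and worth recording: Theorem~\ref{T-matrix vs fp} together with Lemma~\ref{L-positive cone} only yields subequivalence after adding a common summand, and since neither monoid is known to be cancellative, this cannot substitute for the direct idempotent argument in the reverse direction.
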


We need some preparation for the proof of Theorem~\ref{T-regular} which will be given at the end of this section. Lemmas~\ref{L-order for proj} and \ref{L-regular surjective} below should be known. For completeness, we give a proof. 

\begin{lemma} \label{L-order for proj}
Let $P\in M_n(R)$ and $Q\in M_k(R)$ be idempotents. The following are equivalent:
\begin{enumerate}
\item there are $C\in M_{n, k}(R)$ and $D\in M_{k, n}(R)$ such that $P=CQD$;
\item $R^nP$ is isomorphic to a direct summand of $R^kQ$.
\end{enumerate}
\end{lemma}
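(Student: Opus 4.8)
The plan is to realize $R^nP$ and $R^kQ$ as finitely generated projective left $R$-modules: they are the direct summands of the free modules $R^n$ and $R^k$ cut out by the idempotents, via $R^n = R^nP \oplus R^n(I_n - P)$ and likewise for $Q$. I would then translate everything into matrix identities using the standard dictionary for homomorphisms between such modules. Concretely, I first record the (regularity-free) fact that every left $R$-module homomorphism $R^nP \to R^kQ$ is given by right multiplication $v \mapsto vM$ by some $M \in M_{n,k}(R)$ with $M = PMQ$: one extends the map by $0$ on the complementary summand $R^n(I_n-P)$, writes the resulting map $R^n \to R^k$ as right multiplication by a matrix (as $R^n$ is free on its standard row basis), and reads off the two idempotent conditions from ``kills $R^n(I_n-P)$'' and ``lands in $R^kQ$.'' I would also recall that ``$R^nP$ is isomorphic to a direct summand of $R^kQ$'' is equivalent to the existence of homomorphisms $\alpha\colon R^nP \to R^kQ$ and $\beta\colon R^kQ \to R^nP$ with $\beta\alpha = \id_{R^nP}$ (a split injection), since then $\alpha\beta$ is an idempotent endomorphism of $R^kQ$ whose image is a direct summand isomorphic to $R^nP$.

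For $(1)\Rightarrow(2)$, given $P = CQD$ I would exhibit an explicit split injection by taking $\alpha$ to be right multiplication by $PCQ$ and $\beta$ to be right multiplication by $QDP$; these indeed map $R^nP \to R^kQ$ and $R^kQ \to R^nP$ because of the idempotent sandwiching, and the composite $\beta\alpha$ is right multiplication by $PCQ\cdot QDP = P(CQD)P = P^3 = P$, which is the identity on $R^nP$. For $(2)\Rightarrow(1)$, I would run the dictionary in reverse: a split injection yields matrices $M = PMQ$ and $N = QNP$ representing $\alpha$ and $\beta$, and the relation $\beta\alpha = \id_{R^nP}$ becomes $PMN = P$. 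Since $M = PMQ$ forces $PM = M$ and $MQ = M$, this collapses to $P = MN = MQN$, so taking $C = M$ and $D = N$ gives $P = CQD$, as desired.

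The computations are all short; the only real care needed is bookkeeping of conventions---working with left modules but right multiplication by matrices, and tracking which idempotent multiplies on which side---so the main (mild) obstacle is stating the homomorphism dictionary correctly and invoking the identities $PM = M = MQ$ at the right moments. I would also note that von Neumann regularity plays no role in this lemma: it holds for an arbitrary unital ring $R$, and it is only later, in the proof of Theorem~\ref{T-regular}, that regularity is used to turn this module-level comparison into the full isomorphism $\widetilde{\Psi}$.
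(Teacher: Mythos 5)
Your proof is correct and takes essentially the same route as the paper's: both directions rest on the same matrix dictionary (homomorphisms between $R^nP$ and $R^kQ$ as right multiplication by matrices sandwiched between the idempotents, with the normalizations $C=PCQ$, $D=QDP$) and the same computations $PCQ\cdot QDP = P$ for the split injection and $P = PMN = MQN$ for the converse. Your closing observation is also accurate: the paper's argument uses no von Neumann regularity in this lemma, which is invoked only in the neighboring results.
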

\begin{proof}
(1)$\Rightarrow$(2). Replacing $C$ and $D$ by $PCQ$ and $QDP$ respectively if necessary, we may assume that $PCQ=C$ and $D=QDP$.
Define a homomorphism $\varphi: R^nP\rightarrow R^kQ$ by $\varphi(x)=xC$. Also define a homomorphism $\psi: R^kQ\rightarrow R^nP$ by $\psi(y)=yD$. For each $x\in R^nP$ we have
$$\psi(\varphi(x))=xCD=xCQD=xP=x.$$ Therefore $\varphi$ is an embedding of $R^nP$ into $R^kQ$, and $R^kQ=\varphi(R^nP)\oplus \ker \psi$.

(2)$\Rightarrow$(1). Let $\varphi$ be the embedding $R^nP\rightarrow R^k Q$, and let $\psi$ be a homomorphism $R^kQ\rightarrow R^nP$ such that $\psi\circ \varphi$ is the identity map of $R^n P$. Since $R^nP$ is a direct summand of $R^n$, we may extend $\varphi$ to a homomorphism $R^n\rightarrow R^kQ$, whence find a $C\in M_{n, k}(R)$ such that $CQ=C$ and $xC=\varphi(x)$ for all $x\in R^nP$. Replacing $C$ by $PC$, we may assume that $C=PC$. Similarly, we can find a
$D\in M_{k, n}(R)$ such that $DP=D=QD$ and $yD=\psi(y)$ for all $y\in R^kQ$. Then
$$xCD=\psi(xC)=\psi(\varphi(x))=x$$ for all $x\in R^nP$. Thus $zPCD=zP$ for all $z\in R^n$, and hence $PCD=P$. Then
$$CQD=CD=PCD=P.$$
\end{proof}

\begin{lemma} \label{L-regular surjective}
Assume that $R$ is von Neumann regular. Let $A\in M_{n, m}(R)$. Then there are some $k\in \Nb$ and an idempotent $P\in M_k(R)$ satisfying the following conditions:
\begin{enumerate}
\item there are $B\in M_{k, n}(R), C\in M_{m, k}(R), D\in M_{n, k}(R)$ and $E\in M_{k, m}(R)$ such that $P=BAC$ and $A=DPE$;
\item $R^nA$ and $R^kP$ are isomorphic as left $R$-modules.
\end{enumerate}
\end{lemma}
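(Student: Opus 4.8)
The plan is to reduce everything to the existence of a von Neumann inner inverse for the rectangular matrix $A$, and then to read off the idempotent $P$ together with all the required data directly from it. The one substantive input is the following: over a von Neumann regular ring there is some $G\in M_{m,n}(R)$ with $AGA=A$. To get this I would use that matrix rings over von Neumann regular rings are again von Neumann regular, pad $A$ by zero rows or zero columns to a square matrix $\tilde A\in M_N(R)$ with $N=\max(m,n)$, choose a regular inverse $\tilde G$ of $\tilde A$ inside $M_N(R)$ (so $\tilde A\tilde G\tilde A=\tilde A$), and extract the appropriate $m\times n$ corner $G$ of $\tilde G$; the block multiplication then collapses $\tilde A\tilde G\tilde A=\tilde A$ to $AGA=A$.

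Granting this, I would take $k=m$ and set $P=GA\in M_m(R)$. Then $P^2=GAGA=G(AGA)=GA=P$, so $P$ is idempotent, and the identity $AGA=A$ rewrites as $A=A(GA)=AP$. Condition~(1) is now immediate with explicit data: choosing $B=G\in M_{k,n}(R)$, $C=I_m\in M_{m,k}(R)$, $D=A\in M_{n,k}(R)$, and $E=I_m\in M_{k,m}(R)$, one computes $BAC=GAI_m=GA=P$ and $DPE=A(GA)I_m=AGA=A$, which are exactly the two matrix identities required.

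For condition~(2) I would prove the sharper statement that $R^nA=R^kP$ as submodules of $R^m$, whence they are trivially isomorphic. Indeed, each row of $P=GA$ is a left $R$-combination of the rows of $A$, so $R^mP\subseteq R^nA$; conversely, the relation $A=AP$ shows that each row $e_iA$ of $A$ equals $(e_iA)P\in R^mP$, giving $R^nA\subseteq R^mP$. The main obstacle is really only Step~1: setting up the inner inverse for a \emph{rectangular} matrix and keeping the dimension bookkeeping straight in the padding argument. (One could instead invoke the characterization that finitely generated submodules of projective modules over regular rings are direct summands, applied to the finitely generated submodule $R^nA\subseteq R^m$, to produce an idempotent $Q\in M_m(R)$ with $R^mQ=R^nA$; but the inner-inverse route is what makes the explicit matrices $B,C,D,E$ in condition~(1) transparent, so I would prefer it.)
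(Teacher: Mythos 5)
Your proof is correct, and every step checks out: the corner-extraction of a rectangular inner inverse $G\in M_{m,n}(R)$ with $AGA=A$ from a regular inverse of the padded square matrix $\tilde A\in M_{\max(m,n)}(R)$ is a valid block computation in both orientations, $P=GA$ is idempotent with $A=AP$, the witnesses $B=G$, $C=I_m$, $D=A$, $E=I_m$ have the right shapes and verify condition~(1), and the two inclusions $R^mP=R^mGA\subseteq R^nA$ and $R^nA=R^nAP\subseteq R^mP$ give condition~(2). Your route differs from the paper's in organization rather than in its engine: both arguments ultimately rest on padding $A$ to a square matrix and invoking regularity of $M_N(R)$, but the paper absorbs the padding into the construction of $P$ itself, which forces a case split ($n\ge m$ with $k=n$ versus $n<m$ with $k=m$) and yields $R^nA\cong R^kP$ only through the intermediate isomorphism $R^nA\cong R^nA'$; you instead factor out the rectangular identity $AGA=A$ as a standalone step and then run a single uniform construction with $k=m$ in all cases. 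What this buys you is twofold: there is no case analysis, and you obtain the sharper conclusion $R^nA=R^mP$ as literal equality of submodules of $R^m$, which in particular makes the projectivity of $R^nA$ (used later in the paper's Lemma on the order for regular rings, in the implication (4)$\Rightarrow$(3)) immediate, since $R^m=R^mP\oplus R^m(I_m-P)$. The paper's version, in exchange, keeps $P$ inside the larger matrix algebra $M_{\max(m,n)}(R)$, but nothing downstream requires that, so your streamlining loses nothing.
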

\begin{proof} We consider first the case $n\ge m$.
Set $k=n$. Set
$$C=\left[\begin{matrix} I_m & 0_{m, n-m} \end{matrix}\right]\in M_{m, n}(R),$$ and
$$E=\left[\begin{matrix} I_m \\ 0_{n-m, m} \end{matrix}\right]\in M_{n, m}(R).$$ Also set $A'=AC\in M_n(R)$. Then $A=A'E$, and $R^nA\cong R^nA'$.

Since $R$ is von Neumann regular, so is $M_n(R)$ \cite[Exercise 6.20]{Lam01} \cite[Theorem 1.7]{Goodearl}. Thus there is some $B\in M_n(R)$ such that $A'BA'=A'$. Then $P:=BA'=BAC$ is an idempotent. Note that
$$R^nA'=R^nA'BA'\subseteq R^nBA'\subseteq R^nA'.$$ Thus
$$R^nA\cong R^nA'=R^nBA'=R^nP.$$
Also
$$A'PE=A'BA'E=A'E=A.$$

Next we consider the case $n<m$. Set $k=m$. Set
$$B=\left[\begin{matrix} I_n \\ 0_{m-n, n} \end{matrix}\right]\in M_{m, n}(R),$$ and
$$D=\left[\begin{matrix} I_n & 0_{n, m-n} \end{matrix}\right]\in M_{n, m}(R).$$
Also set $A'=BA\in M_m(R)$. Then $A=DA'$, and $R^nA=R^mA'$.

As noted above, $M_n(R)$ is von Neumann regular. Thus there is some $C\in M_m(R)$ such that $A'CA'=A'$. Then
$$P:=CA'=BCA=(BC)AI_m$$ is an idempotent.
Note that
$$R^mA'=R^mA'CA'\subseteq R^mCA'=R^mP\subseteq R^mA'.$$ Thus
$$R^nA=R^mA'=R^mP.$$
Finally,
$$(DA')PI_m=DA'P=DA'CA'=DA'=A.$$
\end{proof}

\begin{lemma} \label{L-order for regular}
Assume that $R$ is von Neumann regular. Let $A\in M_{n, m}(R)$ and $B\in M_{k, l}(R)$. Then the following are equivalent:
\begin{enumerate}
\item $A\preceq_\rM B$;
\item there are $C\in M_{n, k}(R)$ and $D\in M_{l, m}(R)$ such that $A=CBD$;
\item $R^nA$ is isomorphic to a direct summand of $R^kB$;
\item $R^nA$ is isomorphic to a quotient module of $R^kB$.
\end{enumerate}
\end{lemma}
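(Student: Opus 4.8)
The plan is to prove the implications in the cyclic order $(2)\Rightarrow(1)\Rightarrow(3)\Rightarrow(2)$ together with $(3)\Leftrightarrow(4)$. The implication $(2)\Rightarrow(1)$ is immediate: if $A=CBD$, then $A\lesssim B$ by the first alternative in Definition~\ref{D-Matrix Malcolmson}, so $A\preceq_\rM B$. For $(3)\Leftrightarrow(4)$, one direction is trivial since a direct summand is a quotient. For $(4)\Rightarrow(3)$ I would observe that $R^nA$ is a finitely generated submodule of the free module $R^m$, hence a direct summand of $R^m$ and in particular projective, as $R$ is von Neumann regular \cite{Goodearl}; therefore any surjection $R^kB\twoheadrightarrow R^nA$ splits and $R^nA$ is isomorphic to a direct summand of $R^kB$.

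The equivalence $(2)\Leftrightarrow(3)$ I would obtain by reducing matrices to idempotents. Applying Lemma~\ref{L-regular surjective} to $A$ and to $B$ produces idempotents $P\in M_p(R)$ and $Q\in M_q(R)$ together with rectangular matrices satisfying $P=B_AAC_A$, $A=D_APE_A$, $Q=B_BBC_B$, $B=D_BQE_B$, and with $R^nA\cong R^pP$, $R^kB\cong R^qQ$. If $A=CBD$, substituting the factorizations of $P$ and of $B$ gives $P=(B_ACD_B)\,Q\,(E_BDC_A)$, so $P=C'QD'$; conversely, if $P=CQD$, substituting the factorizations of $A$ and of $Q$ gives $A=(D_ACB_B)\,B\,(C_BDE_A)$, so $A=C''BD''$ with the matrix sizes matching condition $(2)$. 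By Lemma~\ref{L-order for proj} the relation $P=C'QD'$ is equivalent to $R^pP$ being isomorphic to a direct summand of $R^qQ$, i.e.\ (via the isomorphisms above) to $(3)$. Thus $(2)$ and $(3)$ are equivalent, the bookkeeping of matrix sizes being routine.

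It remains to prove $(1)\Rightarrow(3)$. Since ``is isomorphic to a direct summand of'' is a transitive relation on modules, it suffices to treat a single step $A\lesssim B$ and then chain the resulting direct-summand relations along $A=A_1\lesssim\cdots\lesssim A_t=B$. If $A=CBD$, this is condition $(2)$ for the pair, already handled. The remaining case is the block step, where $B=\left[\begin{smallmatrix} C & E\\ & D\end{smallmatrix}\right]$ and $A=\left[\begin{smallmatrix} C & \\ & D\end{smallmatrix}\right]$ with $C\in M_{a,b}(R)$, $D\in M_{c,d}(R)$, $E\in M_{a,d}(R)$, so that $A,B\in M_{a+c,\,b+d}(R)$. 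Writing elements of $R^{a+c}$ as $(x_1,x_2)$, one computes $R^nA=R^aC\oplus R^cD=:U\oplus V$ and $R^nB=\{(x_1C,\;x_1E+x_2D)\}$. The first-coordinate projection $R^nB\to U$ is surjective with kernel $\{0\}\times(KE+V)$, where $K=\{x_1\in R^a:x_1C=0\}$ and $KE=\{x_1E:x_1\in K\}$; since $U$ is a finitely generated submodule of $R^b$ it is projective, so this sequence splits and $R^nB\cong U\oplus(KE+V)$.

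Finally I would note that $KE+V$ is a finitely generated submodule of the free module $R^d$, hence projective, and that $V$ is a finitely generated submodule of it, hence a direct summand: $KE+V\cong V\oplus W$ for some $W$. Therefore $R^nB\cong U\oplus V\oplus W\cong R^nA\oplus W$, so $R^nA$ is isomorphic to a direct summand of $R^nB$, which is $(3)$. Combining the steps yields $(1)\Rightarrow(3)\Rightarrow(2)\Rightarrow(1)$ together with $(3)\Leftrightarrow(4)$, proving the equivalence. The only real work lies in the block step, whose crux is the standard fact that over a von Neumann regular ring every finitely generated submodule of a projective module is a direct summand \cite{Goodearl}; this is precisely what absorbs the inconvenient term $KE$ coming from the off-diagonal block $E$, which obstructs a naive direct comparison of $R^nA$ with $R^nB$.
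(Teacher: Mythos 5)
Your proof is correct, but it handles the crucial implication by a genuinely different route from the paper. The paper closes the cycle by proving $(1)\Rightarrow(2)$ outright: for the block step it applies Lemma~\ref{L-regular surjective} to the corner block $C$ to produce an idempotent $P=UCV$ with $C=XPY$, and then performs an explicit matrix factorization in which the off-diagonal block is cancelled by multiplying on the right by $\left[\begin{smallmatrix} P & -UE \\ & I\end{smallmatrix}\right]$ (using $P\cdot(-UE)+PUE=0$), so that $\diag(C,D)$ is exhibited directly as a product $C''BD''$; the implications $(2)\Rightarrow(1)$ and $(3)\Rightarrow(4)$ are trivial, $(2)\Leftrightarrow(3)$ is reduced to idempotents exactly as you do, and $(4)\Rightarrow(3)$ uses projectivity of $R^nA$. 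You instead prove $(1)\Rightarrow(3)$ at the module level, splitting the exact sequence $0\to KE+V\to R^nB\to U\to 0$ via projectivity of $U$ and then splitting $V$ off $KE+V$; this is conceptually close in spirit to the paper's proof of Lemma~\ref{L-order for module regular} and makes transparent exactly what absorbs the off-diagonal block $E$, whereas the paper's computation stays entirely at the matrix level and produces the factorization $A=C''BD''$ explicitly. One small point to tighten: you assert that $KE+V$ is finitely generated, which requires $K=\ker(\,\cdot\, C)$ to be finitely generated---true here because $R^a\cong K\oplus R^aC$ (as $R^aC$ is projective, the surjection $R^a\to R^aC$ splits), but this deserves a sentence; alternatively you can bypass finite generation of $KE$ entirely, since $V$ is already a direct summand of $R^d$ and hence, by the modular law, a direct summand of every intermediate module $V\subseteq KE+V\subseteq R^d$.
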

\begin{proof}
(1)$\Rightarrow$(2). We may assume that $B=\left[\begin{matrix} C & E\\ & D \end{matrix}\right]$ and $A=\left[\begin{matrix} C & \\ & D \end{matrix}\right]$ for some $C, D, E\in M(R)$. By Lemma~\ref{L-regular surjective} there are some $U, V, X, Y\in M(R)$ such that $P:=UCV$ is an idempotent and $C=XPY$. Say, $D\in M_{i, j}(R)$. Then
\begin{align*}
\left[\begin{matrix} PU & \\ & I_i \end{matrix}\right]\left[\begin{matrix} C & E\\ & D \end{matrix}\right]\left[\begin{matrix} V & \\ & I_j \end{matrix}\right]=\left[\begin{matrix} P & PUE\\ & D \end{matrix}\right],
\end{align*}
and
\begin{align*}
\left[\begin{matrix} X & \\ & I_i \end{matrix}\right]\left[\begin{matrix} P & PUE\\ & D \end{matrix}\right]\left[\begin{matrix} P & -UE\\ & I_j \end{matrix}\right]\left[\begin{matrix} Y & \\ & I_j \end{matrix}\right]=\left[\begin{matrix} X & \\ & I_i \end{matrix}\right]\left[\begin{matrix} P & \\ & D \end{matrix}\right]\left[\begin{matrix} Y & \\ & I_j \end{matrix}\right]=\left[\begin{matrix} C & \\ & D \end{matrix}\right].
\end{align*}
Therefore
\begin{align*}
\left[\begin{matrix} X & \\ & I_i \end{matrix}\right]\left[\begin{matrix} PU & \\ & I_i \end{matrix}\right]\left[\begin{matrix} C & E\\ & D \end{matrix}\right]\left[\begin{matrix} V & \\ & I_j \end{matrix}\right]\left[\begin{matrix} P & -UE\\ & I_j \end{matrix}\right]\left[\begin{matrix} Y & \\ & I_j \end{matrix}\right]=\left[\begin{matrix} C & \\ & D \end{matrix}\right].
\end{align*}

(2)$\Rightarrow$(1) and (3)$\Rightarrow$(4) are trivial.

(2)$\Leftrightarrow$(3) follows from Lemmas~\ref{L-regular surjective} and \ref{L-order for proj}.

(4)$\Rightarrow$(3). From Lemma~\ref{L-regular surjective} we know that $R^nA$ is projective.
\end{proof}

\begin{lemma} \label{L-order for module regular}
Assume that $R$ is von Neumann regular. Let $\cM, \cN\in \sFPM(R)$. Then $\cM\preceq_\rM \cN$ if and only if $\cM$ is isomorphic to a quotient module of $\cN$.
\end{lemma}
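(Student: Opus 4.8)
The plan is to prove the equivalence in Lemma~\ref{L-order for module regular} by reducing both directions to matrix statements already established in Lemma~\ref{L-order for regular}, using the fact that over a von Neumann regular ring every finitely presented module is projective (this is implicit in Lemma~\ref{L-regular surjective}, which exhibits $R^n A$ as $R^k P$ for an idempotent $P$).

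First I would dispose of the easy direction. If $\cM$ is isomorphic to a quotient module of $\cN$, then there is a surjection $\cN \to \cM \to 0$, and writing this as the trivial short exact sequence $\cN \to \cM \to 0 \to 0$ (with $\cN_1 = \cN$ and $\cN_2 = 0$), the defining relation $\cM \lesssim \cN$ of Definition~\ref{D-module Malcolmson} holds, so $\cM \preceq_\rM \cN$. This direction does not even use regularity.

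For the forward direction, suppose $\cM \preceq_\rM \cN$. I would present $\cM$ as $R^m/R^n A$ and $\cN$ as $R^l/R^k B$ for suitable matrices $A \in M_{n,m}(R)$ and $B \in M_{k,l}(R)$. The key point is to translate the module subequivalence into the matrix subequivalence of Lemma~\ref{L-order for regular}. Over a von Neumann regular ring, each finitely presented module is projective, so $\cM \cong R^m/R^nA$ is itself isomorphic to the image module $R^n A'$ for an appropriate matrix; concretely, I would use Lemma~\ref{L-regular surjective} to replace the presentations by idempotents and work with the row-space modules $R^{?}P$ directly. The plan is then to show that the chain of elementary relations $\cM = \cM_1 \lesssim \dots \lesssim \cM_n = \cN$ at the module level forces the corresponding row-space modules to be related by ``isomorphic to a direct summand'', which by Lemma~\ref{L-order for regular} is equivalent to the matrix subequivalence, and hence (again by that lemma) to $\cM$ being isomorphic to a quotient of $\cN$.

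The cleanest route, and the one I expect to carry out, is to bypass the chain entirely: use that $\phi$ and $\psi$ from the preceding lemmas convert between the matrix and module pictures, so that $\cM \preceq_\rM \cN$ for modules corresponds under Theorem~\ref{T-regular} (or directly under Lemmas~\ref{L-module to matrix2} and \ref{L-matrix to module}) to a matrix subequivalence of the associated matrices, to which Lemma~\ref{L-order for regular} applies and yields condition (4), namely that the relevant row-space module is a quotient module. The main obstacle will be bookkeeping: ensuring that the isomorphisms $\cM \cong R^n A'$ and $\cN \cong R^k B'$ supplied by Lemma~\ref{L-regular surjective} are compatible, so that ``$\cM$ is a quotient of $\cN$'' transfers cleanly from the projective row-space representatives back to the original modules. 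Since both $\cM$ and $\cN$ are projective, every quotient in sight splits, so once the matrix equivalence $A \preceq_\rM B$ is in hand, Lemma~\ref{L-order for regular}(4) gives exactly the desired quotient statement and the proof closes.
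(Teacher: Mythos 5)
Your easy direction is fine and matches the paper. The forward direction, however, has a genuine gap: your announced ``cleanest route'' is circular within the paper's development. Theorem~\ref{T-regular} is not available here --- the paper derives it \emph{from} Lemmas~\ref{L-regular surjective}, \ref{L-order for regular} and the very lemma you are proving, so invoking it to convert the module subequivalence $\cM\preceq_\rM\cN$ into a matrix subequivalence assumes exactly what must be shown. Your fallback via Lemmas~\ref{L-module to matrix2} and \ref{L-matrix to module} does not close the gap either: those lemmas only produce inequalities in the Grothendieck groups, and by Lemma~\ref{L-positive cone} an inequality between classes there means only $b+c\preceq a+c$ for \emph{some} $c$ in the semigroup. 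Passing from this back to $\cM\preceq_\rM\cN$, or to ``$\cM$ is a quotient of $\cN$'', requires a cancellation property that fails for general von Neumann regular rings. There is also a persistent conflation in your translation: $\cM=R^m/R^nA$ is the cokernel, not the row space $R^nA$ to which Lemma~\ref{L-order for regular} refers; even after replacing $\cM$ by a row-space representative via a complementary idempotent, you would still need the implication ``module subequivalence implies matrix subequivalence of the representatives,'' which is again precisely the content of the present lemma combined with Lemma~\ref{L-order for regular}.

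What actually works --- and is the paper's proof --- is the single-step argument you gesture at but never carry out, and it needs no matrix translation at all. Reduce to one elementary relation $\cM\lesssim\cN$; transitivity is then immediate because a quotient of a quotient is a quotient, and all intermediate modules in the chain are finitely presented by Definition~\ref{D-module Malcolmson}. Write $\cN=\cN_1\oplus\cN_2$ with an exact sequence $\cN_1\overset{\varphi}{\rightarrow}\cM\rightarrow\cN_2\rightarrow 0$. Since $R$ is von Neumann regular, the finitely presented module $\cN_2$ is projective, so the surjection $\cM\rightarrow\cN_2$ (whose kernel is $\im(\varphi)$) splits, giving $\cM\cong\im(\varphi)\oplus\cN_2$; as $\im(\varphi)$ is a quotient of $\cN_1$, the module $\cM$ is a quotient of $\cN_1\oplus\cN_2=\cN$. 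Your remark that ``every quotient in sight splits'' is the right idea, but you apply it only \emph{after} assuming the matrix subequivalence is in hand; the splitting must instead be applied directly to the defining exact sequence of $\lesssim$, which is where the whole content of the lemma lies.
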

\begin{proof} If $\cM$ is isomorphic to a quotient module of $\cN$, then clearly $\cM\lesssim \cN$, whence $\cM\preceq_\rM \cN$. This proves the ``if'' part.

To prove the ``only if'' part, we may assume that $\cM\lesssim \cN$. Then we can write $\cN$ as $\cN_1\oplus \cN_2$ such that there is a homomorphism $\varphi: \cN_1\rightarrow \cM$ with $\cM/\im(\varphi)\cong \cN_2$. Since $R$ is von Neumann regular, every finitely presented left $R$-module is projective \cite[Exercise 6.19]{Lam01} \cite[Theorem 1.11]{Goodearl}. Thus $\cM\cong \im(\varphi)\oplus \cN_2$. It follows that $\cM$ is isomorphic to a quotient module of $\cN$.
\end{proof}

For von Neumann regular rings, every finitely presented left module is projective \cite[Exercise 6.19]{Lam01} \cite[Theorem 1.11]{Goodearl}. Thus Theorem~\ref{T-regular} follows from Lemmas~\ref{L-regular surjective}, \ref{L-order for regular} and \ref{L-order for module regular}.

\begin{remark} \label{R-projective}
Denote by $\rV(R)$ the set of isomorphism classes of finitely generated projective left $R$-modules. 
For each finitely generated projective left $R$-module $\cM$, denote by $\langle \cM\rangle$ its isomorphism class in $\rV(R)$.  Then $\rV(R)$ is an abelian monoid with addition given by $\langle \cM\rangle+\langle \cN\rangle:=\langle \cM\oplus \cN\rangle$.
There is a natural monoid homomorphism $\Theta: \rV(R)\rightarrow \rV_\rM(R)$ sending $\langle \cM\rangle $ to $\langle \cM\rangle_\rM$. 
If $R$ is stably finite, then $\rV(R)$ has the partial order given by $\langle \cM\rangle\preceq \langle \cN\rangle$ if $\cM$ is isomorphic to a direct summand of $\cN$, and an order-unit $\langle {}_RR\rangle$, and $\Theta$ preserves the partial order and order-unit. 
When $R$ is stably finite and von Neumann regular, since every finitely presented left $R$-module is projective \cite[Exercise 6.19]{Lam01} \cite[Theorem 1.11]{Goodearl}, it follows from Lemma~\ref{L-order for module regular} that $\Theta$ is an isomorphism of partially ordered abelian monoids with order-units. 
\end{remark}

\section{Finitely generated module Malcolmson semigroup} \label{S-fg Module M}

In this section we define the finitely generated module Malcolmson semigroup and show that for any unital ring the finitely presented module Malcolmson semigroup embeds naturally into the finitely generated module Malcolmson semigroup in Theorem~\ref{T-fp vs fg}. Throughout this section
$R$ will be a unital ring. Denote by $\sFGM(R)$ the class of all finitely generated left $R$-modules.

\begin{definition} \label{D-fg module Malcolmson}
For $\cM, \cN\in \sFGM(R)$, we define the relations $\cM\lesssim \cN$, $\cM\preceq_{\rM}^{\rm fg} \cN$, and $\cM\sim_\rM^{\rm fg} \cN$ in the same way as 
$\cM\lesssim \cN$, $\cM\preceq_{\rM} \cN$, and $\cM\sim_\rM \cN$ in Definition~\ref{D-module Malcolmson}, except that all modules appearing are in $\sFGM(R)$ now. We define the {\it finitely generated module Malcolmson semigroup} of $R$ as $\rU_\rM(R):=\sFGM(R)/\sim_\rM^{\rm fg}$.
For each $\cM\in \sFGM(R)$, denote by $\langle \cM\rangle_\rM^{\rm fg}$ the equivalence class of $\cM$ in $\rU_\rM(R)$.
\end{definition}

Similar to the situation of $\rV_\rM(R)$, 
for any $\langle \cM\rangle_\rM^{\rm fg}, \langle \cN\rangle_\rM^{\rm fg}\in \rU_\rM(R)$, $\langle \cM \rangle_\rM^{\rm fg}+\langle \cN\rangle_\rM^{\rm fg}:=\langle \cM\oplus \cN\rangle_\rM^{\rm fg}$ does not depend on the choice of the representatives $\cM$ and $\cN$. Thus $\rU_\rM(R)$ is an abelian monoid with identity $0=\langle 0\rangle_\rM^{\rm fg}$ and  partial order given by $\langle \cM\rangle_\rM^{\rm fg} \preceq_\rM^{\rm fg} \langle \cN \rangle_\rM^{\rm fg}$ if $\cM\preceq_\rM^{\rm fg} \cN$. The element $\langle {}_RR\rangle_{\rM}^{\rm fg}$ is an order-unit of $(\rU_\rM(R), \preceq_\rM^{\rm fg})$.

\begin{remark} \label{R-fg vs fp}
For any $\cM\in \sFGM(R)$, writing  $\cM$ as a quotient module of $R^n$ for some $n\in \Nb$, we see that $R^n$ is finitely presented and $\cM \lesssim R^n$. Thus, $\cM\in \sFGM(R)$ and $\cN\in \sFPM(R)$ with $\cM\lesssim \cN$ does not imply that $\cM\in \sFPM(R)$.
\end{remark}

\begin{theorem} \label{T-fp vs fg}
$(\rV_\rM(R), \preceq_\rM, \langle {}_RR\rangle_\rM)$ is naturally a partially ordered submonoid of $(\rU_\rM(R), \preceq_\rM^{\rm fg}, \langle{}_RR\rangle_\rM^{\rm fg})$.
\end{theorem}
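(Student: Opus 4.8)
The plan is to construct the natural map and show it is an order embedding; the only substantial point will be the reverse implication of the order comparison.

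First I would dispose of the easy half. The inclusion $\sFPM(R)\subseteq\sFGM(R)$ respects the relation $\lesssim$: since a direct summand of a finitely presented module is finitely presented, a one–step relation between finitely presented modules is literally a one–step relation in $\sFGM(R)$. Hence $\cM\preceq_\rM\cN$ implies $\cM\preceq_\rM^{\rm fg}\cN$ for $\cM,\cN\in\sFPM(R)$, so $\langle\cM\rangle_\rM\mapsto\langle\cM\rangle_\rM^{\rm fg}$ is a well defined, order preserving monoid homomorphism carrying the order unit $\langle{}_RR\rangle_\rM$ to $\langle{}_RR\rangle_\rM^{\rm fg}$. Both injectivity and the order embedding property then follow from the reverse implication, namely that for finitely presented $\cM,\cN$ one has $\cM\preceq_\rM^{\rm fg}\cN\Rightarrow\cM\preceq_\rM\cN$; indeed, applying it to $\cM\sim_\rM^{\rm fg}\cN$ in both directions and using antisymmetry of $\preceq_\rM$ yields $\cM\sim_\rM\cN$. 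So the whole theorem reduces to this one statement.

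For the reverse implication, fix a chain $\cM=\cL_0\lesssim\cL_1\lesssim\dots\lesssim\cL_n=\cN$ in $\sFGM(R)$ with $\cM,\cN$ finitely presented; the trouble is that the intermediate $\cL_i$ are only finitely generated. I would lift the chain from the bottom. Inductively suppose I have a finitely presented $\cL_i^\flat$ with a surjection $q_i\colon\cL_i^\flat\twoheadrightarrow\cL_i$ (starting from $\cL_0^\flat=\cM$, $q_0=\id$). Writing the step $\cL_i\lesssim\cL_{i+1}$ as $\cL_{i+1}=\cL_{i+1}'\oplus\cL_{i+1}''$ with an exact sequence $\cL_{i+1}'\xrightarrow{f}\cL_i\xrightarrow{g}\cL_{i+1}''\to0$, choose a free cover $\cP\twoheadrightarrow\cL_{i+1}'$, lift $f$ through $q_i$ to $\tilde f\colon\cP\to\cL_i^\flat$ (possible since $\cP$ is free), and set $\cL_{i+1}^\flat:=\cP\oplus\coker(\tilde f)$. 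Then $\cL_i^\flat\lesssim\cL_{i+1}^\flat$ is a genuine relation of finitely presented modules, because $\coker(\tilde f)$, being the cokernel of a map of finitely presented modules, is finitely presented; and $q_i$ induces a surjection $q_{i+1}\colon\cL_{i+1}^\flat\twoheadrightarrow\cL_{i+1}$. This yields a finitely presented chain $\cM=\cL_0^\flat\lesssim\dots\lesssim\cL_n^\flat$ together with a surjection $q_n\colon\cL_n^\flat\twoheadrightarrow\cN$, so in particular $\cM\preceq_\rM\cL_n^\flat$.

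The remaining task is to arrange that $\cL_n^\flat$ may be taken to be $\cN$ itself, since the free covers only make $\cL_n^\flat$ an over approximation of $\cN$. The device is to run the construction compatibly over a directed family of free covers of increasing size, obtaining finitely presented chains $\cM\preceq_\rM\cL_n^\flat(\alpha)$ with surjective transition maps and $\varinjlim_\alpha\cL_n^\flat(\alpha)=\cN$, and then to invoke the collapse lemma: a finitely presented module that is a filtered colimit, with surjective transition maps, of finitely presented modules is isomorphic to one of them. Its proof is the crux: compactness of $\cN$ factors $\id_\cN$ through some $\cL_n^\flat(\alpha_0)$, exhibiting $\cN$ as a retract with finitely generated kernel; these kernels have colimit $0$, so a transition map kills the kernel, whence a surjective transition map factoring through $\cN$ via a split mono must be an isomorphism, giving $\cL_n^\flat(\beta)\cong\cN$ and hence $\cM\preceq_\rM\cN$. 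The main obstacle I anticipate is precisely this packaging step—organizing the bottom up construction into a filtered system with surjective transition maps whose colimit recovers the original chain, so that compactness of $\cN$ can be applied at the top; the step lifting and the collapse lemma are then routine.
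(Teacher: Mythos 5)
Your reduction of the theorem to the reverse implication ($\cM\preceq_\rM^{\rm fg}\cN\Rightarrow\cM\preceq_\rM\cN$ for finitely presented $\cM,\cN$) is correct, and so is your inductive lifting step: $\cL_{i+1}^\flat=\cP\oplus\coker(\tilde f)$ is finitely presented, the exact sequence $\cP\xrightarrow{\tilde f}\cL_i^\flat\rightarrow\coker(\tilde f)\rightarrow 0$ gives $\cL_i^\flat\lesssim\cL_{i+1}^\flat$, and $q_{i+1}$ exists because $q_i(\im\tilde f)=\im f$. But the proof stops exactly where you anticipate trouble, and the packaging step is not merely unfinished --- it cannot work as described. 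Your construction always plants a finitely generated free direct summand $\cP(\alpha)$ in $\cL_n^\flat(\alpha)$, covering the summand $\cN_1$ of $\cN$ from the last link. Test case: $R=\Zb$, $\cM=\cN=\Zb/2$, one step $\cM\lesssim\cN$ with $\cN_1=\Zb/2$, $\cN_2=0$, $f=\id$. Since $q_0=\id$, the lift $\tilde f$ is the surjective cover $\Zb^{k}\twoheadrightarrow\Zb/2$, so $\coker(\tilde f)=0$ and every output is $\cL_1^\flat(\alpha)=\Zb^{k_\alpha}$, free. A filtered colimit of free modules, with whatever transition maps, is flat, and $\Zb/2$ is not flat, so no directed family of your outputs can have colimit $\cN$. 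Equivalently, your own collapse lemma shows that if the packaging succeeded, some $\cL_n^\flat(\beta)\cong\cN$, forcing $\cN$ to contain a nonzero free direct summand whenever $\cN_1\neq 0$ --- false for $\cN=\Zb/2$. The root cause is the asymmetry of $\lesssim$: the surjection $\cL_n^\flat\twoheadrightarrow\cN$ only yields $\cN\preceq_\rM\cL_n^\flat$, the wrong direction, and $\preceq_\rM$ admits no cancellation that would let you come back down.

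The paper's proof (Lemmas~\ref{L-presentation1}, \ref{L-presentation2} and \ref{L-fp vs fg}) avoids this by approximating the chain from the \emph{top} rather than the bottom: starting from $\cN$, it maintains at each stage a finitely presented module surjecting onto the finitely generated intermediate $\cM_k$. Lemma~\ref{L-presentation2} is the mirror of your lifting step --- given finitely presented $\cN$ with finitely generated quotient $\cM'$ and $\cM\lesssim\cM'$, it produces finitely presented $\cN'\lesssim\widetilde{\cN}$ with $\widetilde{\cN}$ a quotient of $\cN$ and $\cM$ a quotient of $\cN'$ --- and Lemma~\ref{L-presentation1} supplies the one point your block construction sidesteps, namely splitting a finitely presented cover compatibly with the direct sum decomposition of the finitely generated quotient. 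Descending the chain yields finitely presented modules with $\cN\gtrsim\cN_2\gtrsim\cN_2'\gtrsim\cdots\gtrsim\cN_n\gtrsim\cN_n'$ and $\cM$ a quotient of $\cN_n'$; since $\cM$ is itself finitely presented and a quotient relation is an instance of $\lesssim$ in the \emph{correct} direction, the chain closes with $\cM\lesssim\cN_n'$, and no limit or collapse argument is needed. To repair your proof, keep your lifting technique but run it downward from $\cN$, keeping the finitely presented surrogate as a cover of each intermediate: the error term is then absorbed at the bottom by the finite presentability of $\cM$, which is exactly the hypothesis your bottom-up version was unable to exploit.
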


Theorem~\ref{T-fp vs fg} follows from Lemma~\ref{L-fp vs fg} below.

\begin{lemma} \label{L-presentation1}
Let $\cN\in \sFPM(R)$ and let $\cM$ be a quotient module of $\cN$. Write $\cM$ as $\cM_1\oplus \cM_2$. Then $\cN$ has a quotient module $\widetilde{\cN}\in \sFPM(R)$ such that we can write $\widetilde{\cN}$ as $\widetilde{\cN}_1\oplus \widetilde{\cN}_2$ and $\cM_i$ is a quotient module of $\widetilde{\cN}_i$ for $i=1, 2$.
\end{lemma}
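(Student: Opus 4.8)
The plan is to realize the splitting $\cM=\cM_1\oplus \cM_2$ at the finitely presented level by replacing the (possibly non--finitely generated) kernels attached to the two projections with \emph{finitely generated} submodules whose sum is still all of $\cN$. Fix a surjection $g\colon \cN\to \cM$ exhibiting $\cM$ as a quotient of $\cN$, let $p_i\colon \cM\to \cM_i$ be the two projections, and set $g_i=p_i\circ g$ and $\cK_i=\ker g_i$ for $i=1,2$. Since $g$ and $p_i$ are surjective, $g_i$ is surjective and $\cN/\cK_i\cong \cM_i$.

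First I would check that $\cK_1+\cK_2=\cN$: given $x\in \cN$, surjectivity of $g$ lets one split $g(x)=(m_1,m_2)$ into $g(x_1)=(m_1,0)$ and $g(x_2)=(0,m_2)$ with $x_1\in \cK_2$ and $x_2\in \cK_1$, while $x-x_1-x_2\in \ker g\subseteq \cK_1\cap \cK_2$; hence $x\in \cK_1+\cK_2$. Next, using that $\cN$ is finitely generated, choose generators $x_1,\dots,x_t$ of $\cN$ and, by $\cK_1+\cK_2=\cN$, write $x_j=a_j+b_j$ with $a_j\in \cK_1$ and $b_j\in \cK_2$. Let $\cL_1$ and $\cL_2$ be the submodules generated by $a_1,\dots,a_t$ and by $b_1,\dots,b_t$ respectively. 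Then $\cL_i$ is finitely generated, $\cL_i\subseteq \cK_i$, and $\cL_1+\cL_2$ contains every $x_j$, so $\cL_1+\cL_2=\cN$.

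Now put $\widetilde{\cN}_i=\cN/\cL_i$. Since $\cN$ is finitely presented and $\cL_i$ is finitely generated, each $\widetilde{\cN}_i$ is finitely presented (the preimage of $\cL_i$ under a finite free presentation $R^t\twoheadrightarrow \cN$ with finitely generated kernel is again finitely generated, so $\widetilde{\cN}_i$ is a quotient of a finite free module by a finitely generated submodule). The inclusion $\cL_i\subseteq \cK_i$ gives a surjection $\widetilde{\cN}_i=\cN/\cL_i\twoheadrightarrow \cN/\cK_i\cong \cM_i$, so $\cM_i$ is a quotient of $\widetilde{\cN}_i$. Finally, because $\cL_1+\cL_2=\cN$, an elementary Chinese--Remainder--type argument shows the diagonal map $\cN\to \cN/\cL_1\oplus \cN/\cL_2$, $x\mapsto (x+\cL_1,\,x+\cL_2)$, is surjective with kernel $\cL_1\cap \cL_2$; hence $\widetilde{\cN}:=\cN/(\cL_1\cap \cL_2)\cong \widetilde{\cN}_1\oplus \widetilde{\cN}_2$ is a quotient of $\cN$, and it is finitely presented as a direct sum of two finitely presented modules. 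This yields $\widetilde{\cN}=\widetilde{\cN}_1\oplus \widetilde{\cN}_2$ with all the required properties.

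The one genuinely delicate point is the second paragraph. The naive candidates do not work: $\cN/\cK_i$ recovers exactly $\cM_i$, which need not be finitely presented, and the full quotient $\cN/(\cK_1\cap \cK_2)\cong \cM$ likewise fails to be finitely presented; moreover a direct sum of two quotients of $\cN$ is typically generated by more than $t$ elements and so is not a quotient of $\cN$ at all. The whole argument therefore hinges on shrinking the kernels $\cK_i$ to finitely generated submodules $\cL_i$ while preserving the single crucial relation $\cL_1+\cL_2=\cN$ — and this is precisely where finite generation of $\cN$ enters, guaranteeing that finitely many splittings $x_j=a_j+b_j$ of the generators already force $\cL_1+\cL_2=\cN$.
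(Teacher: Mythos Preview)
Your proof is correct and follows essentially the same approach as the paper's: both arguments decompose generators of $\cN$ (or of a free cover $R^n$) along the two kernels, take the finitely generated submodules these pieces span, and then apply the Chinese Remainder identification $\cN/(\cL_1\cap\cL_2)\cong \cN/\cL_1\oplus \cN/\cL_2$ from $\cL_1+\cL_2=\cN$. The only cosmetic difference is that the paper works in the free module $R^n$ (including $\ker\varphi$ among the generators of $\cW_i$), whereas you work directly in $\cN$ and invoke the standard fact that a finitely generated quotient of a finitely presented module is finitely presented.
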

\begin{proof} Take a surjective homomorphism $\varphi: R^n\rightarrow \cN$ for some $n\in \Nb$. Then $\ker \varphi$ is finitely generated. Denote by $\pi$ the quotient map $\cN\rightarrow \cM$.
Then $\ker \varphi\subseteq \ker (\pi\varphi)\subseteq (\pi\varphi)^{-1}(\cM_1)\cap (\pi\varphi)^{-1}(\cM_2)$.

For any $u\in R^n$, writing $\pi\varphi(u)$ as $x_1+x_2$ with $x_i\in \cM_i$ for $i=1,2$ and taking $v_i\in R^n$ with $\pi\varphi(v_i)=x_i$ for $i=1,2$, we have $\pi\varphi(v_1+v_2)=\pi\varphi(u)$, whence
$$u\in v_1+v_2+\ker(\pi\varphi)\subseteq (\pi\varphi)^{-1}(\cM_1)+(\pi\varphi)^{-1}(\cM_2).$$ Therefore $$R^n=(\pi\varphi)^{-1}(\cM_1)+(\pi\varphi)^{-1}(\cM_2).$$

Take a finite generating subset $W$ of $R^n$. Write each $w\in W$ as $w_1+w_2$ with $w_i\in (\pi\varphi)^{-1}(\cM_i)$ for $i=1,2$. Denote by $\cW_i$ the submodule of $R^n$ generated by $w_{3-i}$ for $w\in W$ and $\ker \varphi$.  Then $\cW_i$ is finitely generated. Since $\cW_1+\cW_2=R^n$, it is easily checked that the homomorphism $R^n\rightarrow (R^n/\cW_1)\oplus (R^n/\cW_2)$ sending $u$ to $(u+\cW_1, u+\cW_2)$ is surjective and has kernel $\cW_1\cap \cW_2$. Thus we have
$$R^n/(\cW_1\cap \cW_2)\cong (R^n/\cW_1)\oplus (R^n/\cW_2).$$ Put $\widetilde{\cN}=R^n/(\cW_1\cap \cW_2)$, and $\widetilde{\cN}_i=R^n/\cW_i$ for $i=1, 2$. Since $\widetilde{\cN}_1$ and $\widetilde{\cN}_2$ are finitely presented, so is $\widetilde{\cN}$. As $\ker \varphi\subseteq \cW_1\cap \cW_2$, $\widetilde{\cN}$ is a quotient module of $\cN$. For each $i=1, 2$, as
$$\cW_i\subseteq (\pi\varphi)^{-1}(\cM_{3-i})+\ker \varphi=(\pi\varphi)^{-1}(\cM_{3-i}),$$
$\cM_i\cong R^n/(\pi\varphi)^{-1}(\cM_{3-i})$ is a quotient module of $\widetilde{\cN}_i$.
\end{proof}

\begin{lemma} \label{L-presentation2}
Let $\cN\in \sFPM(R)$ and $\cM', \cM\in \sFGM(R)$ such that $\cM'$ is a quotient module of $\cN$ and $\cM\lesssim \cM'$. Then there are some $\widetilde{\cN}, \cN'\in \sFPM(R)$ such that $\cN'\lesssim \widetilde{\cN}$, and that $\widetilde{\cN}$ and $\cM$ are quotient modules of $\cN$ and $\cN'$ respectively.
\end{lemma}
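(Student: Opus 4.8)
The plan is to first pass to the finitely presented setting with Lemma~\ref{L-presentation1}, and then to build $\cN'$ by hand as a quotient of a finite free module carrying just enough relations.

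First I would unpack the hypothesis $\cM\lesssim \cM'$: by definition there is a decomposition $\cM'=\cM_1'\oplus \cM_2'$ and an exact sequence $\cM_1'\xrightarrow{\varphi}\cM\xrightarrow{\pi}\cM_2'\to 0$, so $\pi$ is surjective with $\ker\pi=\im\varphi$. Since $\cM'$ is a quotient of $\cN$ and carries this decomposition, Lemma~\ref{L-presentation1} produces a finitely presented quotient module $\widetilde{\cN}$ of $\cN$ together with a decomposition $\widetilde{\cN}=\widetilde{\cN}_1\oplus\widetilde{\cN}_2$ and surjections $p_i\colon\widetilde{\cN}_i\to\cM_i'$. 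The $\widetilde{\cN}_i$, being direct summands of the finitely presented $\widetilde{\cN}$, are themselves finitely presented. This already supplies $\widetilde{\cN}$ as the required quotient of $\cN$ and fixes the ambient decomposition along which $\cN'\lesssim\widetilde{\cN}$ will be witnessed; it remains to construct a finitely presented $\cN'$ that surjects onto $\cM$ and admits an exact sequence $\widetilde{\cN}_1\to\cN'\to\widetilde{\cN}_2\to 0$.

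For the construction, choose surjections $q_1\colon R^{a}\to\widetilde{\cN}_1$ and $q_2\colon R^{b}\to\widetilde{\cN}_2$ with $\ker q_1$ and $\ker q_2$ finitely generated, which exist since the $\widetilde{\cN}_i$ are finitely presented. Using projectivity of $R^{b}$ and surjectivity of $\pi$, lift $p_2q_2\colon R^{b}\to\cM_2'$ to a homomorphism $\mu\colon R^{b}\to\cM$ with $\pi\mu=p_2q_2$, and put $\nu=\varphi p_1q_1\colon R^{a}\to\cM$. Then $\Theta\colon R^{a}\oplus R^{b}\to\cM$, $(u,v)\mapsto\nu(u)+\mu(v)$, is surjective, because its image contains $\im\nu=\im\varphi=\ker\pi$ and maps onto $\cM_2'$ under $\pi$. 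I also record the projection $g_0\colon R^{a}\oplus R^{b}\to\widetilde{\cN}_2$, $(u,v)\mapsto q_2(v)$, which is surjective with $\ker g_0=R^{a}\oplus\ker q_2$. Now let $g_1,\dots,g_p$ generate $\ker q_1$ and $k_1,\dots,k_r$ generate $\ker q_2$. For each $i$, since $\pi\mu(k_i)=p_2q_2(k_i)=0$ we have $\mu(k_i)\in\ker\pi=\im\nu$, so there is $u_i\in R^{a}$ with $\nu(u_i)=-\mu(k_i)$, that is $(u_i,k_i)\in\ker\Theta$. I define $\cS$ to be the finitely generated submodule of $R^{a}\oplus R^{b}$ generated by the $(g_j,0)$ and the $(u_i,k_i)$, and set $\cN'=(R^{a}\oplus R^{b})/\cS$, which is finitely presented. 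By construction $\cS\subseteq\ker\Theta$, so $\Theta$ descends to a surjection $\cN'\to\cM$, exhibiting $\cM$ as a quotient of $\cN'$; and $\cS\subseteq R^{a}\oplus\ker q_2=\ker g_0$, so $g_0$ descends to a surjection $g\colon\cN'\to\widetilde{\cN}_2$. Since $\ker q_1\oplus 0\subseteq\cS$, the inclusion $R^{a}\oplus 0\hookrightarrow R^{a}\oplus R^{b}$ descends to $f\colon\widetilde{\cN}_1\to\cN'$. Finally $\ker g=(R^{a}\oplus\ker q_2)/\cS$ and $\im f=((R^{a}\oplus 0)+\cS)/\cS$, and these coincide because the second components $k_i$ of the generators $(u_i,k_i)$ of $\cS$ generate $\ker q_2$, so $(R^{a}\oplus 0)+\cS=R^{a}\oplus\ker q_2$. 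Hence $\widetilde{\cN}_1\xrightarrow{f}\cN'\xrightarrow{g}\widetilde{\cN}_2\to 0$ is exact, giving $\cN'\lesssim\widetilde{\cN}$, and the lemma follows.

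The step I expect to be the main obstacle is arranging $\im f=\ker g$ while keeping $\cN'$ finitely presented. The conceptually clean object is the pullback $\cM\times_{\cM_2'}\widetilde{\cN}_2$, which realizes all the maps and fits into $\widetilde{\cN}_1\to\cM\times_{\cM_2'}\widetilde{\cN}_2\to\widetilde{\cN}_2\to 0$; but its kernel over $\widetilde{\cN}_2$ is only $\im\varphi$, which is merely finitely generated, so the pullback need not be finitely presented and cannot be used directly. The device that resolves this is to feed in the finitely many relations $k_i$ of $\widetilde{\cN}_2$ together with the correction terms $u_i$ arising from $\mu(k_i)\in\im\nu$; this simultaneously forces $\cS$ to be finitely generated and makes the kernel of $g$ equal to the image of $\widetilde{\cN}_1$.
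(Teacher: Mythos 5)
Your proof is correct and takes essentially the same route as the paper's: your $\cS$, generated by the $(g_j,0)$ and the $(u_i,k_i)$, is exactly the paper's relation module $\ker\varphi_1+\cW$ (with $\cW$ generated by the corrected relations $(-x_w,w)$), and your maps $\Theta$, $f$, $g$ coincide with the paper's $\beta$, $\gamma$, $\xi$. Your extra verifications (surjectivity of $\Theta$, the computation $(R^a\oplus 0)+\cS=R^a\oplus\ker q_2$) and the closing remark contrasting with the pullback are sound elaborations of steps the paper treats more briefly.
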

\begin{proof} Write $\cM'$ as $\cM'_1\oplus \cM'_2$ such that there is an exact sequence
$$\cM'_1\overset{\theta}{\rightarrow} \cM\overset{\zeta}{\rightarrow} \cM'_2\rightarrow 0.$$
By Lemma~\ref{L-presentation1} we can find a quotient module $\widetilde{\cN}\in \sFPM(R)$ of $\cN$ such that we can write $\widetilde{\cN}$ as $\widetilde{\cN}_1\oplus \widetilde{\cN}_2$ and $\cM_i'$ is a quotient module of $\widetilde{\cN}_i$ for $i=1, 2$.

Let $i=1, 2$. Take a surjective homomorphism $\varphi_i: R^{n_i}\rightarrow \widetilde{\cN}_i$ for some $n_i\in \Nb$. Then $\ker \varphi_i$ is finitely generated. Denote by $\pi_i$ the quotient map $\widetilde{\cN}_i\rightarrow \cM'_i$.

Take a homomorphism $\alpha: R^{n_2}\rightarrow \cM$ such that $\zeta\alpha=\pi_2\varphi_2$. Denote by $\beta$ the homomorphism $R^{n_1+n_2}=R^{n_1}\oplus R^{n_2}\rightarrow \cM$ sending $(u, v)$ to $(\theta \pi_1\varphi_1)(u)+\alpha(v)$. Then $\beta$ is surjective.

Take a finite generating subset $W_2$ of $\ker \varphi_2$. For each $w\in W_2$, since $\zeta\alpha(w)=\pi_2\varphi_2(w)=0$, we have $\alpha(w)\in \ker \zeta=\im(\theta \pi_1\varphi_1)$, whence we can find some $x_w\in R^{n_1}$ such that $\theta\pi_1\varphi_1(x_w)=\alpha(w)$. Denote by $\cW$ the submodule of $R^{n_1}\oplus R^{n_2}$ generated by $(-x_w, w)$ for $w\in W_2$. Then $\cW$ is finitely generated and contained in $\ker \beta$.

Put
$$\cN'=(R^{n_1}\oplus R^{n_2})/(\ker \varphi_1+\cW).$$ Then $\cN'$ is finitely presented. Since
$$\ker \varphi_1+\cW\subseteq \ker(\theta \pi_1\varphi_1)+\ker \beta\subseteq \ker \beta,$$
$\cM$ is a quotient module of $\cN'$.

Denote by $\gamma$ the homomorphism $\widetilde{\cN}_1\cong R^{n_1}/\ker \varphi_1\rightarrow \cN'$ sending $u+\ker \varphi_1$ to $(u, 0)+\ker \varphi_1+\cW$. Also denote by $\xi$ the homomorphism $\cN'\rightarrow R^{n_2}/\ker \varphi_2\cong \widetilde{\cN_2}$ sending $(u, v)+\ker \varphi_1+\cW$ to $v+\ker \varphi_2$. Then $\xi$ is surjective and $\xi\gamma=0$. For any $(u, v)+\ker \varphi_1+\cW\in \ker \xi$, we have $(x, v)\in \cW$ for some $x\in R^{n_1}$, whence
$$(u, v)+\ker \varphi_1+\cW=(u-x, 0)+\ker \varphi_1+\cW\in \im(\gamma).$$
Thus the sequence
$$\widetilde{\cN}_1\overset{\gamma}{\rightarrow} \cN'\overset{\xi}{\rightarrow} \widetilde{\cN}_2\rightarrow 0$$
is exact. Therefore $\cN'\lesssim \widetilde{\cN}_1\oplus \widetilde{\cN}_2=\widetilde{\cN}$.
\end{proof}

\begin{lemma} \label{L-fp vs fg}
Let $\cM, \cN\in \sFPM(R)$. Then $\cM\preceq_\rM \cN$ if and only if $\cM\preceq_\rM^{\rm fg} \cN$.
\end{lemma}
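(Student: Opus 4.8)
The plan is to notice that one direction is immediate and to reduce the substantive direction to an induction driven by Lemma~\ref{L-presentation2}. Since $\sFPM(R)\subseteq \sFGM(R)$, any witnessing chain $\cM=\cM_1\lesssim \cM_2\lesssim\dots\lesssim \cM_n=\cN$ for $\cM\preceq_\rM \cN$ is in particular a chain in $\sFGM(R)$, so $\cM\preceq_\rM \cN$ trivially implies $\cM\preceq_\rM^{\rm fg}\cN$. All the work lies in the converse: a chain through finitely generated modules must be upgraded to one through finitely presented modules when both endpoints are finitely presented.

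The key device I would use is the following strengthened claim, proved by induction on $n$: for any chain $\cM_1\lesssim \cM_2\lesssim\dots\lesssim \cM_n$ in $\sFGM(R)$ with $\cM_n\in \sFPM(R)$, there is some $\cN'\in \sFPM(R)$ such that $\cM_1$ is a quotient module of $\cN'$ and $\cN'\preceq_\rM \cM_n$. The base case $n=1$ is settled by taking $\cN'=\cM_1=\cM_n$. For the inductive step I would apply the hypothesis to the truncated chain $\cM_2\lesssim\dots\lesssim \cM_n$, obtaining $\cN''\in \sFPM(R)$ with $\cM_2$ a quotient of $\cN''$ and $\cN''\preceq_\rM \cM_n$. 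Now $\cM_1\lesssim \cM_2$ and $\cM_2$ is a quotient of the finitely presented module $\cN''$, which is precisely the input demanded by Lemma~\ref{L-presentation2}; it produces $\widetilde{\cN},\cN'\in \sFPM(R)$ with $\cN'\lesssim \widetilde{\cN}$, $\widetilde{\cN}$ a quotient of $\cN''$, and $\cM_1$ a quotient of $\cN'$. Since a quotient relation is a $\lesssim$ step (take the second summand to be $0$), and a $\lesssim$ step between finitely presented modules is a $\preceq_\rM$ step, I obtain $\cN'\lesssim \widetilde{\cN}\lesssim \cN''$ among finitely presented modules, hence $\cN'\preceq_\rM \cN''\preceq_\rM \cM_n$, completing the induction.

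With the claim established, the lemma follows at once. Given $\cM\preceq_\rM^{\rm fg}\cN$ with $\cM,\cN\in \sFPM(R)$, apply the claim with $\cM_1=\cM$ and $\cM_n=\cN$ to produce $\cN'\in \sFPM(R)$ with $\cM$ a quotient of $\cN'$ and $\cN'\preceq_\rM \cN$; because $\cM$ is itself finitely presented, the quotient relation $\cM\lesssim \cN'$ is a legitimate $\preceq_\rM$ step between finitely presented modules, whence $\cM\preceq_\rM \cN'\preceq_\rM \cN$. The main obstacle is not this assembly, which is routine once the inductive statement is chosen correctly, but rather lies in Lemma~\ref{L-presentation2}, whose role here is to push finite presentability down one step of the chain while simultaneously tracking both the $\lesssim$ relation and the relevant quotient relations; the only genuine subtlety in the present argument is recognizing that the induction must be strengthened to carry a finitely presented module $\cN'$ covering $\cM_1$ as a quotient, rather than attempting to rebuild the chain one term at a time.
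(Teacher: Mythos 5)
Your proof is correct and is essentially the paper's own argument: the paper iterates Lemma~\ref{L-presentation2} along the chain, maintaining exactly your invariant (a finitely presented module covering the current $\cM_k$ as a quotient, together with a finitely presented $\preceq_\rM$-chain back to $\cN$), and then concatenates using the observation that a quotient between finitely presented modules is a $\lesssim$ step. Your induction on chain length with the strengthened claim is just a repackaging of that explicit iteration.
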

\begin{proof} The ``only if'' part is trivial.

Assume that $\cM\preceq_\rM^{\rm fg} \cN$. Then there are $n\in \Nb$ and $\cM_1, \dots, \cM_n\in \sFGM(R)$ such that $\cN=\cM_1\gtrsim \cM_2\gtrsim \cdots\gtrsim \cM_n=\cM$. Applying Lemma~\ref{L-presentation2} to $(\cN, \cM', \cM)=(\cN, \cN, \cM_2)$, we find $\cN_2,\cN_2'\in \sFPM(R)$ such that $\cN_2'\lesssim \cN_2$, and that $\cN_2$ and $\cM_2$ are quotient modules of $\cN$ and $\cN_2'$ respectively. Assuming that for some $2\le k<n$ we have constructed $\cN_k'\in \sFPM(R)$ such that $\cM_k$ is a quotient module of $\cN_k'$. Applying Lemma~\ref{L-presentation2} to $(\cN, \cM', \cM)=(\cN_k', \cM_k, \cM_{k+1})$, we find $\cN_{k+1},\cN_{k+1}'\in \sFPM(R)$ such that $\cN_{k+1}'\lesssim \cN_{k+1}$, and that $\cN_{k+1}$ and $\cM_{k+1}$ are quotient modules of $\cN_k'$ and $\cN_{k+1}'$ respectively. In this way we obtain $\cN_k, \cN_k'\in \sFPM(R)$ for all $2\le k\le n$ such that $\cN_k'\lesssim \cN_k$ for all $2\le k\le n$, and that $\cN_{k+1}$ is a quotient module of $\cN_k'$ for all $2\le k <n$, and that $\cN_2$ and $\cM$ are quotient modules of $\cN$ and $\cN_n'$ respectively. Note that for any $\cW_1, \cW_2\in \sFPM(R)$, if $\cW_2$ is a quotient module of $\cW_1$, then $\cW_2\lesssim \cW_1$. Thus we have
$$\cN\gtrsim \cN_2\gtrsim\cN_2'\gtrsim\cN_3\gtrsim\cN_3'\gtrsim\cdots \gtrsim\cN_n\gtrsim\cN_n'\gtrsim\cM.$$
Therefore $\cM\preceq_\rM \cN$. This proves the ``if'' part.
\end{proof}

\begin{remark} \label{R-fp to fg}
From Theorems~\ref{T-fp vs fg} and \ref{T-extension submonoid} we conclude that every state of
$(\rV_\rM(R), \preceq_\rM, \langle {}_RR\rangle_\rM)$ extends to a state of $(\rU_\rM(R), \preceq_\rM^{\rm fg}, \langle {}_RR\rangle_\rM^{\rm fg})$.
Equivalently, every Sylvester module rank function $\dim$ for $R$ extends to a function $\dim': \sFGM(R)\rightarrow \Rb_{\ge 0}$ satisfying the conditions (1)-(3) of Definition~\ref{D-Sylvester mod}. In fact, there is always a canonical extension. In \cite[Theorem 3.3]{Li21} it was shown that $\dim$ extends uniquely to a {\it bivariant Sylvester module rank function} for $R$ which assigns a value $\dim(\cM_1 \mid \cM_2)\in \Rb_{\ge 0}\cup \{+\infty\}$ for every pair of left $R$-modules $\cM_1\subseteq \cM_2$, satisfies the conditions in \cite[Definition 3.1]{Li21}, and extends $\dim$ in the sense that $\dim(\cM)=\dim(\cM \mid \cM)$ for every $\cM\in \sFPM(R)$. Setting $\dim'(\cM)=\dim(\cM \mid \cM)$ for every $\cM\in \sFGM(R)$ provides a canonical extension of $\dim$.
\end{remark}

\appendix

\section{Goodearl-Handelman theorem and Antoine-Perera-Thiel theorem for partially ordered abelian semigroups} \label{S-POAS}

In this appendix we prove Theorem~\ref{T-extension submonoid}, unifying Theorem~\ref{T-GH} of Goodearl-Handelman and Theorem~\ref{T-GH1} of Antoine-Perera-Thiel in the setting of partially ordered abelian semigroups.

We first extend Lemma~\ref{L-APT} to the case of partially ordered abelian semigroups. In fact, the main part of the proof of \cite[Lemma 5.2.3]{APT18} holds for this situation, and we only indicate the modification needed.

\begin{lemma} \label{L-APT2}
Let $(\rW, \preceq, v)$ be a partially ordered abelian semigroup with order-unit. Let $\rW_1$ be a subsemigroup of $\rW$ containing $v$, and let $\varphi$ be a state of
$(\rW_1, \preceq, v)$. Let $a\in \rW$. Set
$$p=\sup\{(\varphi(b)-\varphi(c))/m \mid b, c\in \rW_1, m\in \Nb, b\preceq c+ma\},$$ and
$$q=\inf\{(\varphi(b)-\varphi(c))/m \mid b, c\in \rW_1, m\in \Nb, b\succeq c+ma\}.$$
Denote by $\rW_2$ the subsemigroup of $\rW$ generated by $\rW_1$ and $a$.
Then $-\infty< p\le q< \infty$ and
$$[p, q]=\{\psi(a) \mid \psi \mbox{ is a state of } (\rW_2, \preceq, v) \mbox{ extending } \varphi\}.$$
\end{lemma}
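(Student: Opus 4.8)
The plan is to follow the proof of \cite[Lemma 5.2.3]{APT18} for positively ordered monoids and replace each use of positivity by an appeal to the order-unit axioms for $v$; this is precisely the point where the conclusion relaxes from $0\le p$ to $-\infty<p$. I would split the argument into three stages: finiteness together with $p\le q$; the necessity $\psi(a)\in[p,q]$ for every extending state; and the sufficiency, namely producing, for each prescribed $r\in[p,q]$, a state of $(\rW_2,\preceq,v)$ extending $\varphi$ with value $r$ at $a$.

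First I would settle finiteness and $p\le q$, which needs no cancellation. Choosing (order-unit axiom) some $n\in\Nb$ with $a\preceq nv$ and $v\preceq a+nv$, the relation $v+a\preceq(n+1)v$ shows the defining set for $q$ is nonempty with $q\le n$, while $v\preceq nv+a$ shows the set for $p$ is nonempty with $p\ge 1-n$. For the central inequality, suppose $b\preceq c+ma$ and $b'\succeq c'+m'a$ with $b,c,b',c'\in\rW_1$. Multiplying the first by $m'$ gives $m'b\preceq m'c+mm'a$, and then
\[
m'b+mc'\preceq m'c+mm'a+mc'=m'c+m(c'+m'a)\preceq m'c+mb',
\]
a comparison inside $\rW_1$. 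Applying the state $\varphi$ and dividing yields $(\varphi(b)-\varphi(c))/m\le(\varphi(b')-\varphi(c'))/m'$, so taking the supremum over the first and the infimum over the second gives $p\le q$ and the two-sided finiteness $-\infty<p\le q<\infty$.

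Necessity is then immediate: if $\psi$ is a state of $(\rW_2,\preceq,v)$ extending $\varphi$, applying $\psi$ to $b\preceq c+ma$ gives $\varphi(b)\le\varphi(c)+m\psi(a)$, so $\psi(a)\ge p$, and dually $\psi(a)\le q$. For sufficiency I would fix $r\in[p,q]$ and attempt the natural extension $\psi(w+ka)=\varphi(w)+kr$ on $\rW_2$; additivity and $\psi(v)=1$ are then formal, and well-definedness reduces to monotonicity by antisymmetry. The core reduction is to show that $w_1+k_1a\preceq w_2+k_2a$ (with $w_1,w_2\in\rW_1$) forces $\varphi(w_1)+k_1r\le\varphi(w_2)+k_2r$; when the comparison is one-sided it matches exactly the defining conditions of $p$ or $q$, and $r\in[p,q]$ closes it.

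The step I expect to be the main obstacle is exactly this monotonicity when copies of $a$ appear on \emph{both} sides. In the positively ordered setting APT exploit $0\preceq a$ to absorb the surplus copies; here $a$ has no lower bound, so the common copies cannot simply be cancelled. My plan is to control the surplus $a$'s on the smaller side through the order-unit estimates $a\preceq nv$ and $v\preceq a+nv$, pushing the comparison into $\rW_1$ and then combining it with the inequality $p\le q$ proved above — the order-unit playing the role their positivity played. Equivalently, I would pass to the partially ordered abelian group $(\cG(\rW_2),\cG(\rW_2)_+,[v])$ via Lemma~\ref{L-scaled ordered}, descend $\varphi$ to the subgroup generated by the image of $\rW_1$, and invoke the Goodearl--Handelman theorem (Theorem~\ref{T-GH}) for the single element $[a]$; by Lemma~\ref{L-positive cone} the bounds there are computed over group-level comparisons $[b]-[c]\preceq m[a]$, and the delicate heart of the proof is to identify those bounds with the semigroup-level $p$ and $q$. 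This identification is the genuine ``modification'' alluded to after the statement, and is where I would concentrate the technical effort.
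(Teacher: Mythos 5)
Your first two stages are correct and complete: the order-unit estimates give $-\infty<p$ and $q<\infty$, your direct verification of $p\le q$ (multiply $b\preceq c+ma$ by $m'$ and add $mc'$) is sound and in fact cleaner than the paper, which simply cites the proof of \cite[Lemma 5.2.3.(1)]{APT18}, and necessity is immediate as you say. The genuine gap is in sufficiency, exactly at the step you flag, and the one concrete mechanism you propose there does not work. Suppose $b+\bar{m}a\succeq c+(m+\bar{m})a$ with $b,c\in\rW_1$; monotonicity of $\psi(w+ka)=\varphi(w)+kr$ requires $r\le(\varphi(b)-\varphi(c))/m$. A single order-unit absorption $\bar{m}a\preceq \ell v$ only yields $b+\ell v\succeq c+(m+\bar{m})a$, hence $r\le q\le(\varphi(b)-\varphi(c)+\ell)/(m+\bar{m})$: the denominator has shifted from $m$ to $m+\bar{m}$ and the additive error $\ell$ does not vanish, and combining with $p\le q$ cannot repair either defect. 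The missing idea --- and this is precisely the paper's modification of \cite[Lemma 5.2.3]{APT18} --- is \emph{amplification}: prove by induction that $nb+\bar{m}a\succeq nc+(nm+\bar{m})a$ for all $n\in\Nb$, absorb the single leftover $\bar{m}a$ once (on the $q$-side via $\bar{m}a\preceq \ell v$, on the $p$-side via $v\preceq \bar{m}a+kv$, giving e.g. $nb+v\preceq nc+kv+(nm+\bar{m})a$), then divide by $nm+\bar{m}$ and let $n\to\infty$, so that the errors $\ell, k$ are killed in the limit. This shows that the quantities $p', q'$ defined by both-sided comparisons $b+\bar{m}a\preceq c+(m+\bar{m})a$ coincide with your $p, q$; the paper then quotes the construction in the proof of \cite[Lemma 5.2.3.(4)]{APT18}, which produces an extending state with any prescribed value $r\in[p',q']$.

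Your fallback route through $(\cG(\rW_2),\cG(\rW_2)_+,[v])$ and Theorem~\ref{T-GH} has a second gap you do not flag, prior to the identification of bounds you correctly single out: the descended map $\tilde\varphi([b]-[c])=\varphi(b)-\varphi(c)$ on the subgroup generated by the image of $\rW_1$ is not obviously well defined, since $[b]-[c]=[b']-[c']$ in $\cG(\rW_2)$ means $b+c'+w=b'+c+w$ for some $w\in\rW_2$, and $w$ may contain copies of $a$, outside the domain of $\varphi$. The same problem recurs for monotonicity of $\tilde\varphi$ on $\cH\cap\cG(\rW_2)_+$ and again in matching the Goodearl--Handelman bounds to your semigroup-level $p,q$, because by Lemma~\ref{L-positive cone} every group-level comparison carries an auxiliary $w\in\rW_2$. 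All three reductions are instances of the same both-sides problem and are resolved only by the amplification argument above, so the Grothendieck detour is not an alternative to it; the paper accordingly works directly at the semigroup level.
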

\begin{proof} As convention, we put  $b+0\cdot a=b$ for $b\in \rW_1$.  Set
$$p'=\sup\{(\varphi(b)-\varphi(c))/m \mid b, c\in \rW_1, m\in \Nb, \bar{m}\in \Zb_{\ge 0}, b+\bar{m}a\preceq c+(m+\bar{m})a\},$$
and
$$q'=\inf\{(\varphi(b)-\varphi(c))/m \mid b, c\in \rW_1, m\in \Nb, \bar{m}\in \Zb_{\ge 0}, b+\bar{m}a\succeq c+(m+\bar{m})a\}.$$
Clearly, for every state $\psi$ of $(\rW_2, \preceq, v)$ extending $\varphi$, one has $\psi(a)\in [p, q]$. In the proof of \cite[Lemma 5.2.3.(4)]{APT18}, it was shown that for every $r\in [p', q']$, there is some state $\psi$ of $(\rW_2, \preceq, v)$ extending $\varphi$ such that $\psi(a)=r$. Thus it suffices to show $p=p', q=q'$ and $-\infty<p\le q<\infty$.

Clearly $p\le p'$. Let $ b, c\in \rW_1, m\in \Nb, \bar{m}\in \Zb_{\ge 0}$ such that
$$ b+\bar{m}a\preceq c+(m+\bar{m})a.$$
Via induction it was shown in the proof of \cite[Lemma 5.2.3.(1)]{APT18} that
$$nb+\bar{m}a\preceq nc+(nm+\bar{m})a$$
for all $n\in \Nb$. Since $v$ is an order-unit, one has $v\preceq \bar{m}a+ kv$ for some $k\in \Nb$. Then
$$nb+v\preceq nb+\bar{m}a+kv\preceq nc+kv+(nm+\bar{m})a$$
for all $n\in \Nb$. Thus
$$p\ge \sup_{n\in \Nb}\frac{\varphi(nb+v)-\varphi(nc+kv)}{nm+\bar{m}}\ge \frac{\varphi(b)-\varphi(c)}{m}.$$ It follows that $p\ge p'$, whence $p=p'$. Also, since $v$ is an order-unit, one has $v\preceq a+ lv$ for some $l\in \Nb$. Then
$$p\ge \varphi(v)-\varphi(lv)=1-l>-\infty.$$

It was shown in the proof of \cite[Lemma 5.2.3.(2)]{APT18} that $q=q'$. Since $v$ is an order-unit, one has $a\preceq nv$ for some $n\in \Nb$. Then $v+a\preceq (n+1)v$, and hence
$$q\le \varphi((n+1)v)-\varphi(v)=n<\infty.$$

The inequality $p\le q$ was shown in the proof of  \cite[Lemma 5.2.3.(1)]{APT18}.
\end{proof}

From Lemma~\ref{L-APT2} and Zorn's lemma we obtain the following result immediately, which unifies Theorems~\ref{T-GH} and \ref{T-GH1}. The last sentence was proven earlier by Blackadar and R{\o}rdam \cite[Corollary 2.7]{BR92}, and as Leonel Robert points out to us, also by Fuchssteiner \cite[Theorem 1]{Fuchssteiner}.

\begin{theorem} \label{T-extension submonoid}
Let $(\rW, \preceq, v)$ be a partially ordered abelian semigroup with order-unit.
Let $\rW_1$ be a subsemigroup of $\rW$ containing $v$, and let $\varphi$ be a state of $(\rW_1, \preceq, v)$. Let $a\in \rW$. Set $$p=\sup\{(\varphi(b)-\varphi(c))/m \mid b, c\in \rW_1, m\in \Nb, b\preceq c+ma\},$$ and
$$q=\inf\{(\varphi(b)-\varphi(c))/m \mid b, c\in \rW_1, m\in \Nb, b\succeq c+ma\}.$$
Then $-\infty<p\le q<\infty$, and
$$[p, q]=\{\psi(a) \mid  \psi \mbox{ is a state of } (\rW, \preceq, v) \mbox{ extending } \varphi\}.$$
In particular,  every state of $(\rW_1, \preceq, v)$ extends to a state of $(\rW, \preceq, v)$.
\end{theorem}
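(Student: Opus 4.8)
The plan is to deduce the theorem from the one-step extension result Lemma~\ref{L-APT2} by a standard Zorn's lemma argument, using the observation that the bounds and one of the two inclusions come essentially for free. I would first record the cheap half. The bounds $-\infty<p\le q<\infty$ are exactly those furnished by Lemma~\ref{L-APT2} (the quantities $p,q$ defined there coincide verbatim with the present ones), so nothing new is needed. For the inclusion $\{\psi(a):\psi \text{ extends } \varphi\}\subseteq [p,q]$: if $\psi$ is any state of $(\rW,\preceq,v)$ extending $\varphi$ and $b\preceq c+ma$ with $b,c\in \rW_1$, then since $\psi$ is an order-preserving homomorphism restricting to $\varphi$ on $\rW_1$, we get $\varphi(b)=\psi(b)\le \psi(c)+m\psi(a)=\varphi(c)+m\psi(a)$, hence $\psi(a)\ge (\varphi(b)-\varphi(c))/m$; taking the supremum gives $\psi(a)\ge p$, and the symmetric argument using $b\succeq c+ma$ gives $\psi(a)\le q$.

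The substance is the reverse inclusion, which will simultaneously yield the final ``in particular'' clause. I would fix $r\in [p,q]$ and, by Lemma~\ref{L-APT2} applied to the given $\rW_1,\varphi,a$, produce a state $\psi_2$ of $(\rW_2,\preceq,v)$ extending $\varphi$ with $\psi_2(a)=r$, where $\rW_2$ is the subsemigroup generated by $\rW_1$ and $a$. Then I would run Zorn's lemma on the set $\cS$ of pairs $(\rW',\psi')$, where $\rW'$ is a subsemigroup of $\rW$ with $\rW_2\subseteq \rW'$ and $\psi'$ is a state of $(\rW',\preceq,v)$ extending $\psi_2$, partially ordered by declaring $(\rW',\psi')\le (\rW'',\psi'')$ when $\rW'\subseteq \rW''$ and $\psi''$ restricts to $\psi'$ on $\rW'$. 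This set is nonempty, since it contains $(\rW_2,\psi_2)$, and the union of any chain is again such a pair, so Zorn's lemma yields a maximal element $(\rW^*,\psi^*)$.

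Finally I would argue $\rW^*=\rW$. If not, I would choose $a'\in \rW\setminus \rW^*$ and apply Lemma~\ref{L-APT2} with subsemigroup $\rW^*$, state $\psi^*$, and element $a'$: its conclusion provides a state of the subsemigroup generated by $\rW^*$ and $a'$ extending $\psi^*$, contradicting maximality. Hence $\rW^*=\rW$, and $\psi:=\psi^*$ is a state of $(\rW,\preceq,v)$ extending $\varphi$ with $\psi(a)=r$. This proves $[p,q]\subseteq \{\psi(a)\}$, and since $[p,q]\neq \emptyset$ the ``in particular'' clause follows by forgetting the prescribed value at $a$.

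The only point requiring care, though it is routine, will be checking that the union of a chain in $\cS$ is genuinely a state on a subsemigroup: directedness of the chain guarantees that any finite set of elements lies in a single $\rW'$, so additivity, order-preservation, and the normalization $\psi'(v)=1$ all descend to the union, which is itself a subsemigroup containing $\rW_2$. I expect no genuine obstacle here, since all the analytic content, namely the finiteness of $p$ and $q$ and the existence of the single-element extension, has already been isolated in Lemma~\ref{L-APT2}; the theorem is essentially a bookkeeping upgrade from one-step to transfinite extension.
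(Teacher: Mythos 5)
Your proposal is correct and matches the paper's own argument, which derives the theorem ``from Lemma~\ref{L-APT2} and Zorn's lemma'' in exactly the way you describe: the one-step extension with prescribed value $\psi_2(a)=r$, followed by a maximal-element argument on pairs $(\rW',\psi')$. The routine verifications you flag (chain unions, the easy inclusion $\{\psi(a)\}\subseteq[p,q]$) are indeed the only bookkeeping needed, so there is nothing to add.
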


\begin{corollary} \label{C-range}
Let $(\rW, \preceq, v)$ be a partially ordered abelian semigroup with order-unit.
Assume that $(n+1)v\not\preceq nv$ for all $n\in \Nb$.
For any $a\in \rW$, setting
$$p=\sup\{(n-k)/m \mid n, k, m\in \Nb, nv\preceq ma+kv\},$$ and
$$q=\inf\{(n-k)/m  \mid n, k, m\in \Nb, nv\succeq ma+kv\},$$
one has $-\infty< p\le q<\infty$, and
$$[p, q]=\{\varphi(a)\mid \varphi \mbox{ is a state of } (\rW, \preceq, v)\}.$$
If furthermore $a\in \rW$ is an order-unit,  then
$$q=\inf\{n/m \mid n, m\in \Nb, nv\succeq ma\},$$ and
$$p=\sup\{n/m \mid n,m \in \Nb, nv\preceq ma\}.$$
\end{corollary}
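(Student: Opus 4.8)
The plan is to deduce Corollary~\ref{C-range} from Theorem~\ref{T-extension submonoid} by taking for $\rW_1$ the subsemigroup generated by the order-unit $v$, namely $\rW_1=\{nv:n\in\Nb\}$, together with its obvious state. First I would use the hypothesis $(n+1)v\not\preceq nv$ to show that $\rW_1$ is rigid. Since $v$ is an order-unit one has $b\preceq b+v$ for every $b$, so $mv\preceq nv$ whenever $m\le n$; conversely, if $nv\preceq mv$ with $n>m$ then $(m+1)v\preceq nv\preceq mv$, contradicting the hypothesis. Hence $nv\preceq mv$ holds iff $n\le m$ (and $nv=mv$ iff $n=m$), so $\varphi_0(nv):=n$ is a well-defined state of $(\rW_1,\preceq,v)$.

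Applying Theorem~\ref{T-extension submonoid} to $\rW_1$, $\varphi_0$ and $a$, the numbers produced there are computed by taking $b=nv$ and $c=kv$ in the relevant supremum and infimum: then $\varphi_0(b)-\varphi_0(c)=n-k$ and the defining conditions become exactly $nv\preceq ma+kv$ and $nv\succeq ma+kv$, so those numbers coincide with the $p$ and $q$ of the corollary. The theorem then yields $-\infty<p\le q<\infty$ together with the fact that $[p,q]$ is the set of values $\psi(a)$ as $\psi$ ranges over states of $\rW$ extending $\varphi_0$. Finally, every state $\psi$ of $(\rW,\preceq,v)$ satisfies $\psi(nv)=n=\varphi_0(nv)$, so it automatically extends $\varphi_0$, while every extension is a state of $\rW$; thus the extensions are precisely all states, which proves the first assertion.

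For the last statement, assume $a$ is an order-unit and set $q''=\inf\{n/m:nv\succeq ma\}$ and $p''=\sup\{n/m:nv\preceq ma\}$. One inequality is immediate: padding a pair $nv\succeq ma$ (resp. $nv\preceq ma$) with a single $v$ gives $(n+1)v\succeq ma+v$ (resp. $(n+1)v\preceq ma+v$), a pair contributing the value $n/m$ to $q$ (resp. $p$); hence $q\le q''$ and $p\ge p''$. For the reverse inequalities the main obstacle is that in a semigroup one cannot cancel the surplus term $kv$ in a relation $nv\succeq ma+kv$. I would overcome this by iterating the relation: an induction on $t$ shows $(k+t(n-k))v\succeq tma+kv$ for all $t\in\Nb$, where at each step one adds $(n-k)v$, uses $kv+(n-k)v=nv$, and feeds the original relation back in; then $tma+kv\succeq tma$. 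This produces pairs $(k+t(n-k))v\succeq tma$ whose values $\tfrac{n-k}{m}+\tfrac{k}{tm}$ decrease to $\tfrac{n-k}{m}$, forcing $q''\le\tfrac{n-k}{m}$.

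This iteration requires $n>k$, and here is where I would use that $a$ is an order-unit: fixing $j$ with $v\preceq ja$, every state satisfies $\varphi(a)\ge 1/j$, so by the first part $q$ is the maximum of $\varphi(a)$ over states and hence $q\ge 1/j>0$; consequently no $q$-pair with $n\le k$ (value $\le 0$) can occur, the iteration applies to every $q$-pair, and we get $q''\le q$, so $q=q''$. The argument for $p$ is symmetric: the same induction gives $(k+t(n-k))v\preceq tma+kv$, and now I bound $tma+kv\preceq(tm+j)a$ using $kv\preceq ja$, obtaining $p''$-pairs whose values tend to $\tfrac{n-k}{m}$; since each such value is $\le p''$, and the remaining pairs with $n\le k$ contribute values $\le 0<p''$, every $p$-pair value is $\le p''$, whence $p\le p''$ and $p=p''$. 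The decisive device throughout is the telescoping iteration, which substitutes for the cancellation that is unavailable in a bare semigroup.
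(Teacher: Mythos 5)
Your proposal is correct and takes essentially the same route as the paper's proof: the subsemigroup $\rW_1=\{nv : n\in \Nb\}$ with its unique state fed into Theorem~\ref{T-extension submonoid} for the first assertion, and the same telescoping iteration $(k+t(n-k))v\succeq tma+kv$ (resp.\ $\preceq$) for the order-unit refinement, your exclusion of pairs with $n\le k$ via $q\ge 1/j>0$ being a harmless variant of the paper's direct derivation of $n\ge k$ from $nv\succeq kv$ and the rigidity hypothesis. The one slip is in the $p$-direction: $v\preceq ja$ yields $kv\preceq kja$, not $kv\preceq ja$, so the correct bound is $tma+kv\preceq (tm+kj)a$ (the paper's $lma+kra$), and since $\frac{k+t(n-k)}{tm+kj}\to \frac{n-k}{m}$ as $t\to\infty$ the conclusion is unaffected.
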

\begin{proof}
By assumption we have $kv\not\preceq nv$ for all $k, n\in \Nb$ with $k>n$. Thus  $kv\neq nv$ for all distinct $k, n\in \Nb$.
Then $\rW_1:=\{nv \mid  n\in \Nb\}$ is a subsemigroup of $\rW$ containing $v$, and $\varphi: \rW_1\rightarrow \Rb$ defined by $\varphi(nv)=n$ is the unique state of $(\rW_1, \preceq, v)$.
From Theorem~\ref{T-extension submonoid} we obtain $[p, q]=\{\varphi(a) \mid \varphi \mbox{ is a state of } (\rW, \preceq, v)\}$.

Now assume further that $a\in \rW$ is an order-unit. Set
$$q'=\inf\{n/m \mid n, m\in \Nb, nv\succeq ma\},$$ and
$$p'=\sup\{n/m \mid n,m \in \Nb, nv\preceq ma\}.$$
 Clearly $q\le q'$ and $p\ge p'$.

Let $n, k, m\in \Nb$ such that $nv\succeq ma+kv$. Then $nv\succeq ma+kv\succeq kv$, whence $n\ge k$. If
$$(l(n-k)+k)v\succeq lma+kv$$ for some $l\in \Nb$, then
$$((l+1)(n-k)+k)v\succeq (n-k)v+lma+kv= lma+nv\succeq (l+1)ma+kv.$$
It follows that
$$(l(n-k)+k)v\succeq lma+kv\succeq lma$$
for all $l\in \Nb$. Thus
$$q'\le \inf_{l\in \Nb}\frac{l(n-k)+k}{lm}=\frac{n-k}{m}.$$ Consequently, $q'\le q$. Therefore $q'=q$.

Since $a$ is an order-unit, we have $v\preceq ra$ for some $r\in \Nb$. Then $\frac{1}{r}\le p'$.
Let $n, k, m\in \Nb$ such that $nv\preceq ma+kv$. If $n\le k$, then $\frac{n-k}{m}\le 0\le p'$. Thus we may assume $n>k$. If
$$(l(n-k)+k)v\preceq lma+kv$$ for some $l\in \Nb$, then
$$((l+1)(n-k)+k)v\preceq (n-k)v+lma+kv= lma+nv\preceq (l+1)ma+kv.$$
It follows that
$$(l(n-k)+k)v\preceq lma+kv\preceq lma+kra$$ for all $l\in \Nb$. Thus
$$p'\ge \sup_{l\in \Nb}\frac{l(n-k)+k}{lm+kr}\ge \frac{n-k}{m}.$$ Consequently, $p'\ge p$. Therefore $p'=p$.
\end{proof}

\begin{remark} \label{R-Cuntz}
Let $\cA$ be a stably finite simple unital $C^*$-algebra. In \cite{Cuntz78} Cuntz gave some bounds for the value of any nonzero $x\in M(\cA)$ under dimension functions. Putting
$$s_n(x)=\sup \{s\in \Nb \mid s\langle 1\rangle_\rC\preceq_\rC n\langle x\rangle_\rC\},$$ and
$$r_n(x)=\inf \{r\in \Nb \mid r\langle 1\rangle_\rC\succeq_\rC n\langle x\rangle_\rC\},$$
the limits
$$A_-(x)=\lim_{n\to \infty} s_n(x)/n$$ and
$$A_+(x)=\lim_{n\to \infty} r_n(x)/n$$ exist.
Using Theorem~\ref{T-GH}, in \cite[page 152]{Cuntz78} Cuntz observed that
$$[A_-(x), A_+(x)]\supseteq \{\varphi(x) \mid \varphi \mbox{ is a dimension function for } \cA\}.$$
Since $\cA$ is stably finite, $(n+1)\langle 1\rangle_\rC\not\preceq n\langle 1\rangle_\rC$ for all $n\in \Nb$ \cite[Lemma 4.1]{Cuntz78}. As $\cA$ is simple, $\langle x\rangle_\rC$ is an order-unit of $\rW_\rC(\cA)$ for every nonzero $x\in M(\cA)$. Thus from Corollary~\ref{C-range} we actually have
$$[A_-(x), A_+(x)]= \{\varphi(x) \mid \varphi \mbox{ is a dimension function for } \cA\}$$
for every nonzero $x\in M(\cA)$.
\end{remark}


\end{document}